\newtheorem{theorem}{Theorem}
\newtheorem{proposition}[theorem]{Proposition}
\newtheorem{corollary}[theorem]{Corollary}
\newtheorem{lemma}[theorem]{Lemma}
\newtheorem{definition}[theorem]{Definition}
\newtheorem{remark}[theorem]{Remark}
\newenvironment{proof}{\noindent{\bf Proof:}\ }{\(\qed\) \par\medskip}
\newcommand{\qed}{\quad\mbox{\rule{7pt}{7pt}}}
\newcommand{\cmt}[1]{\ifhmode\newline\fi{\sf *** \ \ #1 \\}}
\newcommand{\NN}{\mathbf{N}}
\newcommand{\RR}{\mathbf{R}}
\newcommand{\indic}{\mathbbm{1}}
\newcommand{\calA}{\mathcal{A}}
\newcommand{\calD}{\mathcal{D}}
\newcommand{\calF}{\mathcal{F}}
\newcommand{\calG}{\mathcal{G}}
\newcommand{\calI}{\mathcal{I}}
\newcommand{\calO}{\mathcal{O}}
\newcommand{\calP}{\mathcal{P}}
\newcommand{\calS}{\mathcal{S}}
\newcommand{\calT}{\mathcal{T}}
\newcommand{\mediannopar}{{\textstyle\mathop{{\sf Med}}}}
\newcommand{\median}[2]{{\textstyle\mathop{{\sf Med}}_{#1}}\left[{#2}\right]}
\def\ifpdf\input{#.pdf_t}\else\input{#.pstex_t}\fi1{\ifpdf\input{#1.pdf_t}\else\input{#1.pstex_t}\fi}
\newcommand{\set}[2]{\{\,#1\,|\,#2\,\}}
\newcommand{\setof}[1]{\left\{#1\right\}}
\newlength{\first}\newlength{\second}
\newcommand{\eqdef}{\stackrel{{\rm def}}{=}}
\newcommand{\smidge}{{\kern .05em}}
\newlength{\subjtolt}\setlength{\subjtolt}{5em}
\newcommand{\prob}[2]{{\textstyle{\mathop{{\sf Pr}}_{#1}}}\left[\, #2\,\right]}
\newcommand{\expec}[2]{{\textstyle{\mathop{{\sf E}}_{#1}}}\left[\, #2\,\right]}
\newcommand{\var}[2]{{\textstyle{\mathop{{\sf V}}_{#1}}}\left[\, #2\,\right]}
\newlength{\saveparindent}
\def\bproof{\begin{rm}\protect\vspace{5pt}\noindent{\bf Proof: }%
\addtolength{\parskip}{4pt}\setlength{\parindent}{0pt}}
\def\eproof{\end{rm}\addtolength{\parskip}{-4pt}%
\setlength{\parindent}{\saveparindent}}
\newcommand{\bprooff}[1]{\begin{rm}\protect\vspace{5pt}%
\noindent{\bf Proof of #1: }\addtolength{\parskip}{4pt}%
\setlength{\parindent}{0pt}}
\newenvironment{prooff}[1]{\par\bprooff{#1}}{\eproof\qed\par}
\begin{document}
\title{Generalizations and Variants of the Largest Non-crossing Matching Problem in Random Bipartite Graphs}

\author{
  Marcos Kiwi\thanks{
  Depto.~Ing.~Matem\'{a}tica \&
  Ctr.~Modelamiento Matem\'atico UMI 2807, U.~Chile.
  Web: \texttt{www.dim.uchile.cl/$\sim$mkiwi}.
  Gratefully acknowledges the support of
    CONICYT via Basal in Applied Mathematics and
    FONDECYT 1090227.}
  \and
Jos\'e A.~Soto\thanks{Department of Mathematics,
  MIT.
  \texttt{jsoto@math.mit.edu} .
 Gratefully acknowledges support from NSF contract CCF-0829878 and ONR
grant N00014-11-1-0053.}
}

\maketitle
\begin{abstract}
A two-rowed array $\alpha_{n}=\begin{pmatrix}a_1 & a_2 & \ldots & a_n \\
  b_1 & b_2 & \ldots & b_n \end{pmatrix}$ is said to be in lexicographic
  order if $a_k\leq a_{k+1}$ and 
  $b_{k}\leq b_{k+1}$ if $a_{k}=a_{k+1}$.
A length $\ell$ (strictly) increasing subsequence of $\alpha_{n}$ is a set of 
  indices $i_1 < i_2 < \ldots < i_{\ell}$ such that 
  $b_{i_1} < b_{i_2} < \ldots < b_{i_\ell}$.
We are interested in the statistics of the length of the 
  longest increasing subsequence of $\alpha_{n}$
  chosen according to $\calD_{n}$, for distinct families of 
  distributions $\calD=(\calD_{n})_{n\in\NN}$,
  and when $n$ goes to infinity.
This general framework encompasses well studied problems such as the
  so called Longest Increasing Subsequence problem, the Longest 
  Common Subsequence problem, problems concerning directed bond
  percolation models, among others.
We define several natural 
  families of distinct distributions and characterize
  the asymptotic behavior of the length of a longest increasing 
  subsequence chosen according to them.
In particular, 
  we consider generalizations to $d$-rowed arrays as well as symmetry
  restricted two-rowed arrays.
\end{abstract}

\date{}

\section{Introduction}\label{sec:intro}
Suppose that we select uniformly at random a permutation
  $\pi$ of $[n]\eqdef\setof{1,\ldots,n}$.
We can associate to $\pi$ the two-rowed 
  lexicographically sorted array 
  $\alpha_{\pi}=\begin{pmatrix} 1 & 2 & \ldots & n \\ 
  \pi(1) & \pi(2) & \ldots & \pi(n) \end{pmatrix}$.
We denote by $lis(\pi)$ the length of a longest 
  increasing subsequence of $\alpha_{\pi}$.
The determination, as $n\to\infty$, of the first moments
  of $lis(\pi)$ has been a 
  problem of much interest for a long time  
  (for surveys see~\cite{ad99,or98,stanley02} and 
  references therein).
This line of research led to what is considered a 
  major breakthrough: the determination by Baik, Deift and 
  Johansson~\cite{bdj99} of, after proper scaling, the distribution of 
  $lis(\cdot)$.
In~\cite{br99}, variations are studied 
  where instead of permutations of $[n]$,
  random involutions, signed permutations, and signed involutions
  are selected at random. 
Generalizations where $d-1$ random permutations are selected 
  can be restated as problems concerning 
  longest increasing subsequences of $d$-rowed arrays.

Suppose now that we select uniformly at random two words
  $\mu$ and $\nu$ from $\Sigma^{n}$, where 
  $\Sigma$ is some finite alphabet of size $k$.
We can associate to $(\mu,\nu)$ the two-rowed 
  lexicographically sorted array $\alpha_{\mu,\nu}$ 
  where $\begin{pmatrix} i \\ j\end{pmatrix}$ is a 
  column of $\alpha_{\mu,\nu}$ if and only if
  the $i$-th character of $\mu$ is the same as the 
  $j$-th character of $\nu$
  (for an example, see Figure~\ref{fig:example-lcs}).
\begin{figure}[h]\label{fig:example-lcs}
\[
\begin{pmatrix}
1 & 1 & 2 & 2 & 3 & 3 & 4 & 5 & 5 \\
3 & 5 & 1 & 2 & 3 & 5 & 4 & 3 & 5
\end{pmatrix}
\]
\caption{Lexicographically ordered two-rowed array associated 
  to words $abaca$ and $bbaca$}
\end{figure}
The length of a longest common subsequence of $\mu$ and 
  $\nu$, denoted $lcs(\mu,\nu)$, equals the length of a longest
  increasing subsequence of $\alpha_{\mu,\nu}$.
Since the mid 70's, it has been known~\cite{cs75} that 
  the expectation of $lcs(\mu,\nu)$ when normalized
  by $n$, converges to a constant $\gamma_{k}$ (the so called
  Chv\'atal-Sankoff constant).
The determination of the exact value of $\gamma_{k}$, for $k$ fixed, 
  remains a challenging open problem.
To the best of our knowledge, 
  the asymptotic distribution theory of the longest increasing
  subsequence problem is essentially uncharted territory.
Generalizations where $d$ random length $n$ words are 
  chosen from a finite alphabet $\Sigma$ 
  can also be restated as problems concerning 
  longest increasing subsequences of $d$-rowed arrays.

We now discuss yet one more relevant instance,
  previously considered by Sepp\"al\"ainen~\cite{seppalainen97}, 
  and encompassed by the framework described above. 
Fix a parameter $0<p<1$ and let $n$ be a positive integer.
For each site of the lattice $[n]^{2}$, let a point be present
  (the site is occupied) with probability $p$ and absent
  (the site is empty) with probability $q=1-p$,
  independently of all the other sites.
Let $\omega:[n]^{2}\to\setof{0,1}$ be an encoding of the occupied/empty
  sites ($1$ representing an occupied site and $0$ a vacant one).
We can associate to $\omega$ a two-rowed lexicographically sorted array
  $\alpha_{\omega}$ where $\begin{pmatrix} i \\ j\end{pmatrix}$ is a
  column of $\alpha_{\omega}$ if and only if site $(i,j)\in [n]^{2}$
  is occupied.
Let $L(\omega)$ equal the number of sites on a longest 
  strictly increasing path of occupied sites according to $\omega$,
  where a path $(x_1,y_1), (x_2,y_2),\ldots (x_m,y_m)$ of points
  on $[n]^{2}$ is strictly increasing if $x_1<x_2<\ldots <x_m$
  and $y_1<y_2<\ldots <y_m$.
Observe that $L(\omega)$ equals the length of a longest increasing 
  subsequence of $\alpha_{\omega}$.
Subadditivity arguments easily imply that the expected value of 
  $L(\omega)$, when normalized
  by $n$, converges to a constant $\gamma_{p}$.  
Via a reformulation of the problem as one of interacting particle
  systems, Sepp\"al\"ainen~\cite{seppalainen97} shows that 
  $\gamma_{p}=2\sqrt{p}/(1+\sqrt{p})$.
Also worth noting is that the same object $\alpha_{\omega}$ arises
  in the study of the asymptotic shape of a directed bond percolation 
  model (see~\cite[\S 1]{seppalainen97} for details).
Symmetric variants, where for example site $(i,j)$ is occupied
  if and only if $(j,i)$ is occupied, can be easily formulated.
Generalizations where $d$-dimensional lattices are considered
  can also be restated as problems concerning 
  longest increasing subsequences of $d$-rowed arrays.
However, to the best of our knowledge, neither of the latter 
  two variants has been considered in the literature.

\medskip
Thus far, we have described well studied scenarios where 
  the general problem formulated in the abstract naturally arises.
This motivates our work.
However, for the sake of clarity of exposition and in order to use
  more convenient notation, it will be preferable to reformulate
  the issues we are interested in as one concerning hyper-graphs.
To carry out this reformulation, below we introduce some 
  useful terminology and then address in this language 
  the problem of determining the statistics of the length of 
  a longest increasing subsequence of a 
  randomly chosen lexicographically sorted 
  $d$-rowed array.

\medskip
Let $A_1,\ldots,A_d$ be $d$ disjoint (finite) sets, also called 
  \emph{color classes}.
We assume that over each $A_i$ there is a total order relation,
  which abusing notation, we denote $\leq$ in all cases.
When we consider subsets of a totally ordered color class we always
  assume the subset inherits, and thus respects, the original order.
A \emph{$d$-partite hyper-graph} over
  totally ordered color classes $A_1,\ldots,A_d$
  with edge set $E\subseteq A_1\times\ldots\times A_d$ 
  is a tuple $G=(A_1,\ldots,A_d;E)$,
  and its edge set is denoted by $E(G)$.
For $A'_i\subseteq A_i$ with $1\leq i\leq d$ and 
  hyper-graph $G=(A_1,\ldots,A_d;E)$, we denote by 
  $\left. G\right|_{A'_1\times\ldots\times A'_d}$ the \emph{hyper-subgraph
  induced by $G$ in $A'_1\times\ldots\times A'_d$}, i.e.~the
  hyper-graph with node set $V'=A'_1\times\ldots\times A'_d$
  and edge set $E\cap V'$.
We say that two hyper-graphs are \emph{disjoint} if their 
  corresponding vertex sets are disjoint.
Let $K_{A_1,\ldots,A_d}$ denote the \emph{complete $d$-partite hyper-graph} over 
  color classes $A_1,\ldots,A_d$ whose edge set is 
  $A_1\times A_2\times \ldots A_d$.
Henceforth, we denote the cardinality of $A_i$ by $n_i$.
If we identify $A_i$ with $[n_i]$,
  then we write $K_{n_1,\ldots,n_d}$ instead of $K_{A_1,\ldots,A_d}$.
If $n_1=\ldots=n_d$, then we write $K^{(d)}_{n}$ instead of 
  $K_{n_1,\ldots,n_d}$.
Over the edge set of $K_{A_1,\ldots,A_d}$ we consider the natural partial 
  order relation $\preceq$ defined by
\begin{eqnarray*}
(v_1,\ldots,v_d) \preceq (v'_{1},\ldots,v'_{d}) & \Longleftrightarrow & 
  v_{i}\leq v'_{i} \text{ for all $1\leq i\leq d$}\,.
\end{eqnarray*}
We say that a collection of node-disjoint edges 
  $M\subseteq E(G)$ is a \emph{non-crossing hyper-matching}
  if for every pair of edges $e,f\in M$ it holds that
  $e\preceq f$ or $f\preceq e$.
When $G=(A_1,\ldots,A_d;E)$ is such that $E(G)$ is a non-crossing 
  hyper-matching we will simply say that $G$ is a non-crossing 
  $d$-partite hyper-graph, or simply a non-crossing 
  hyper-matching.
Furthermore, we will denote by $L(G)$ the size of a largest 
  non-crossing hyper-matching of $G$ and by $L(\calF)$ the random 
  variable $L(G)$ when $G$ is chosen according to a distribution~$\calF$
  over $d$-partite hyper-graphs.
When we want to stress that we are dealing with only two color classes,
  we will speak of graphs and matchings instead of hyper-graphs and 
  hyper-matchings.

Now, consider a family of distributions $\calD=(\calD(K_{A_1,\ldots,A_d}))$ 
  where each $\calD(K_{A_1,\ldots,A_d})$ is a probability distribution 
  over subgraphs of $K_{A_1,\ldots,A_d}$.
In this work we are interested in understanding what we refer to 
  as the Longest Non-crossing Matching problem, i.e.~the 
  behavior of the expectation of $L(G)$ when~$G$ 
  is chosen according to various distinct families 
  of distributions $\calD=(\calD(K^{(d)}_{n}))$
  and $n$ goes to infinity.
Of course, in order to be able to derive some meaningful results
  we will need some assumptions on the distributions 
  $\calD(K^{(d)}_{n})$.
Below, we encompass in a definition a minimal set of assumptions that
  are both easy to establish and general enough to capture several
  relevant scenarios.
\begin{definition}\label{def:random}
Let $\calD=(\calD(K_{A_1,\ldots,A_d}))$ be a family of distributions where
  each $\calD(K_{A_1,\ldots,A_d})$ is a probability distribution over the 
  collection of hyper-subgraphs of $K_{A_1,\ldots,A_d}$.
We say that $\calD$ is a random $d$-partite hyper-graph model if 
  for $H$ chosen according to $\calD(K_{A_1,\ldots,A_d})$ the following
  two conditions hold:
\begin{enumerate}
\item \textbf{Monotonicity:}
If $A'_i\subseteq A_i$ with $1\leq i\leq d$ and $n'_i=|A_{i}|$,
  then the distribution of $\left. H\right|_{A'_1\times\ldots\times A'_d}$ 
  is $\calD(K_{n'_1,\ldots,n'_d})$.

\item \textbf{Block independence:} 
If $A'_i,A''_i\subseteq A_i$ are disjoint with $1\leq i\leq d$,  
  then $H'=\left. H\right|_{A'_1\times\ldots\times A'_d}$ and
  $H''=\left. H\right|_{A''_1\times\ldots\times A''_d}$ 
  are independent (and so, $L(H')$ and $L(H'')$ are also
  independent).
\end{enumerate}
\end{definition}
For some of the results we will establish, the following 
  weaker notion will suffice.
\begin{definition}\label{def:weak-random}
Let $\calD=(\calD(K^{(d)}_{n}))$ be a family of distributions where
  each $\calD(K^{(d)}_{n})$ is a probability distribution over the 
  collection of hyper-subgraphs of $K^{(d)}_{n}$.
We say that $\calD$ is a weak random $d$-partite hyper-graph model if 
  for $H$ chosen according to $\calD(K^{(d)}_{n})$ the following
  two conditions hold:
\begin{enumerate}
\item \textbf{Weak monotonicity:}
If $A'\subseteq [n]$, $|A'|=n'$,
  then the distribution of $\left. H\right|_{A'\times\ldots\times A'}$ 
  is $\calD(K^{(d)}_{n'})$.

\item \textbf{Weak block independence:} 
If $A',A''\subseteq [n]$ are disjoint with $1\leq i\leq d$,  
  then $H'=\left. H\right|_{A'\times\ldots\times A'}$ and
  $H''=\left. H\right|_{A''\times\ldots\times A''}$ 
  are independent (and so, $L(H')$ and $L(H'')$ are also
  independent).
\end{enumerate}
\end{definition}
The reader may easily verify that the following distributions
  (on which we will focus attention) 
  give rise to random $d$-partite hyper-graph models:
\begin{itemize}
\item $\Sigma(K_{n_1,\ldots,n_d},k)$ (the \emph{random $d$-word model}) --- 
  the distribution over the set of hyper-subgraphs obtained
  from $K_{n_1,\ldots,n_d}$ when each element in the vertex set
  of $K_{n_1,\ldots,n_d}$ is uniformly and independently 
  randomly assigned one of $k$ letters and where edges,
  for which not all of its nodes end up being assigned the same 
  letter, are discarded.

\item $\calG(K_{n_1,\ldots,n_d},p)$ (the 
  \emph{$d$-dimensional binomial random hyper-graph model}) --- 
  the distribution over the set of 
  hyper-subgraphs $H$ of $K_{n_{1},\ldots,n_d}$ where 
  the events $\set{H}{e\in E(H)}$ for $e\in E(K_{n_1,\ldots,n_d})$
  have probability $p$ and are mutually independent.
\end{itemize}
The model $\Sigma(K^{(d)}_{n},k)$ is referred to as 
  the random word model because it arises when one considers
  the letters of $d$ words $\omega_1,\ldots,\omega_d$
  of length $n_1,\ldots,n_d$, respectively.
The letters in each word are chosen uniformly and independently 
  from a finite alphabet of size $k$.
Then, each word is identified with
  a color class of a hyper-subgraph~$H$ of $K_{n_1,\ldots,n_d}$
  whose hyper-edges are the $(v_1,\ldots,v_d)\in V(H)$
  for which $v_1,\ldots,v_d$ have been assigned the same letter.
It is easy to see that the longest common subsequence of 
  $\omega_1,\ldots,\omega_d$ equals $\ell$
  if and only if $L(H)=\ell$.
The random word model thus encompasses the Longest Common Subsequence
  problem discussed above.
Similarly, the attentive reader probably already noticed that 
  the binomial random graph model also encompasses the 
  already discussed
  point lattice process considered by Sepp\"al\"ainen~\cite{seppalainen97}.

Inspired in the work of Baik and Rains~\cite{br99} cited above,
  where symmetric variants of the Longest Increasing Subsequence problem
  were considered, we will also study the 
  following two symmetric variants of the 
  binomial random graph model:
\begin{itemize}
\item $\calS(K_{n,n},p)$ 
  (the \emph{symmetric binomial random graph model}) --- 
  the distribution over the set of subgraphs $H$ of $K_{n,n}$ where 
  the events $\set{H}{(i,j),(j,i)\in E(H)}$
  for $1\leq i<j\leq n$,
  have probability $p$ and are mutually independent.

\item $\calA(K_{2n,2n},p)$ 
  (the \emph{anti-symmetric binomial random graph model}) --- 
  the distribution over the set of 
  subgraphs $H$ of $K_{2n,2n}$ where 
  the events $\set{H}{(i,j),(2n-i+1,2n-j+1)\in E(H)}$ for
  $1\leq i,j\leq 2n$
  have probability $p$ and are mutually independent.
\end{itemize}
Note that $(\calS(K_{n,n},p))_{n\in\NN}$ is not a random model
  according to Definition~\ref{def:random}, 
  but it is a weak random model according to Definition~\ref{def:weak-random}.
On the other hand, $(\calA(K_{2n,2n},p))_{n\in\NN}$ is not even a 
  weak random model.

Henceforth, given a random bipartite graph model $\calD=(\calD(\cdot))$,
  any value that is constant across the distributions $\calD(\cdot)$
  will be called \emph{internal parameter} of the model --- e.g.~$1/p$ and 
  $k$ in $\calG(\cdot,p)$ and $\Sigma(\cdot,k)$, respectively. 

\begin{figure}
\begin{center}
\ifpdf\input{symmetric.pdf_t}\else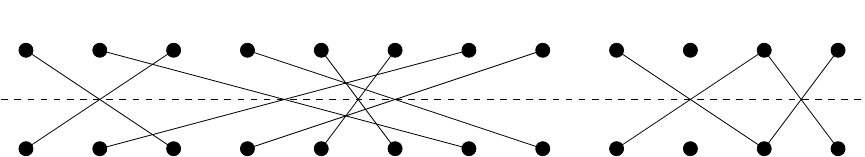\fi
\end{center}
\caption{A bipartite graph in the support of 
  $\calS(K_{12,12},p)$.}\label{fig:symmetric}
\end{figure}

\begin{figure}
\begin{center}
\ifpdf\input{anti-symmetric.pdf_t}\else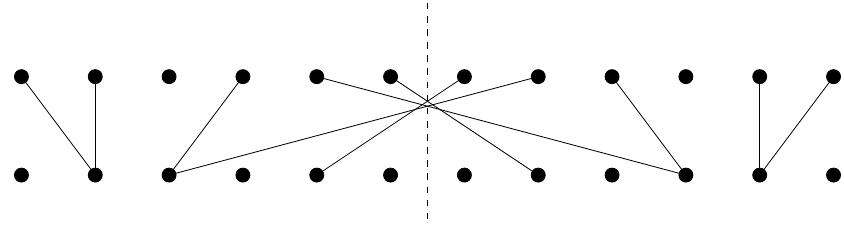\fi
\end{center}
\caption{A bipartite graph in the support of 
  $\calA(K_{12,12},p)$.}\label{fig:anti-symmetric}
\end{figure}
The main purpose of this work is to establish a general result, 
  referred to as Main Theorem,
  with a minimal set of easily verifiable hypothesis, that 
  characterizes the limit behavior, when properly normalized, of 
  $\expec{}{L(\calD(K_{n}^{(d)},p))}$ when $d$ is fixed and
  both $n$ and the internal parameter $t$ go to infinity.
We also show several applications of our Main Theorem.
Specifically, we characterize aspects of the limiting behavior 
  for the four previously introduced random hyper-graphs models.
In the following section we formally state our Main Theorem and 
  the results of its application.

\subsection{Main contributions}
A straightforward application of Talagrand's inequality (as stated 
  in~\cite[Theorem~2.29]{jlr00}) yields that both
  $L(\Sigma(K_{n,n},k))$ and $L(\calG(K_{n,n},p))$ are
  concentrated around any one of their (potentially not unique) medians.
As we shall see, the same is true for 
    $L(\Sigma(K^{(d)}_{n},k))$ and $L(\calG(K^{(d)}_{n},p))$.
Somewhat equivalent statements hold
  for the the symmetric and anti-symmetric binomial random graph models.
The following general notion will encompass the concentration 
  type requirement 
  the random hyper-graph models will need to satisfy in order 
  for our Main Theorem to be applicable.
\begin{definition}
Let $\calF$ be a distribution over bipartite hyper-graphs
  and $\mediannopar$ be a median of $L(\calF)$. 
We say that $\calF$ has \emph{concentration constant $h$} 
  if for all $s\geq 0$,
\begin{eqnarray*}
\prob{}{L(\calF) \leq (1-s)\mediannopar} & \leq & 
  2\exp\left(-hs^2\mediannopar\right)\,, \\
\prob{}{L(\calF) \geq (1+s)\mediannopar} & \leq & 
  2\exp\left(-h\frac{s^2}{(1+s)}\mediannopar\right)\,.
\end{eqnarray*}
We say that the random bipartite hyper-graph model 
  $\calD=(\calD(\cdot))$ has concentration constant $h$ if each
  $\calD(\cdot)$ has concentration constant $h$.
\end{definition}

Note that if one can estimate a median of 
  $L(\calF)$ for some distribution $\calF$,
  show that the median and mean are close, and establish
  that $\calF$ has a concentration constant, then one can derive 
  a concentration (around its mean) result for $L(\calF)$.
Unfortunately, it is not in general easy to estimate a median 
  of $L(\calD(K_{n_1,\ldots,n_d}))$ for the distributions 
  $\calD(K_{n_1,\ldots,n_d})$ we consider.
However, we will be able to approximate them 
  under some assumptions on $n_1,\ldots,n_d$.
In particular, we will show that there is a median that 
  is proportional to the geometric mean of $n_1,\ldots,n_d$.
The following definition captures the aforementioned assumptions
  we will need, and the sort of approximation guarantee that we 
  will be able to establish.
\begin{definition}\label{def:approximate-median}
Led $\calD=(\calD(K_{n_1,\ldots,n_d}))$ be a random $d$-partite 
  hyper-graph model with internal parameter~$t$.
Fix $n_1,\ldots,n_d$ and let 
  $N=\left(\prod_{i=1}^{d}n_i\right)^{1/d}$ and 
  $S=\sum_{i=1}^{d}n_i$ denote the geometric mean
  and sum of $n_1,\ldots,n_d$, respectively.
We say that $\calD$ admits a \emph{$(c,\lambda,\theta)$-approximate median}
  (or simply a \emph{$(c,\lambda,\theta)$-median}) if for all $\delta>0$
  there are sufficiently large 
  constants $a(\delta)$, $b(\delta)$, and $t'(\delta)$, such that
  for all $t\geq t'$, for which 
\begin{itemize}
\item \textbf{Size lower bound condition:} 
  $N \geq at^{\lambda}$,

\item \textbf{Size upper bound condition:} 
  $Sb\leq t^{\theta}$,
\end{itemize}
it holds that
\[
(1-\delta)\frac{cN}{t^{\lambda}} \ \leq \
\median{}{L(\calD(K_{n_1,\ldots,n_d}))}
  \ \leq \ 
  (1+\delta)\frac{cN}{t^{\lambda}}\,.
\]  
\end{definition}
In other words, if $\calD=(\calD(K_{n_1,\ldots,n_d}))$ is a  a random 
  $d$-partite 
  hyper-graph model with internal parameter~$t$
  that admits a $(c,\lambda,\theta)$-median and the geometric mean
  (respectively sum) of $n_1,\ldots,n_d$ is $N$ (respectively~$S$)
  are such that $N=\Omega(t^{\lambda})$ (respectively $S=O(t^{\theta})$),
  then for sufficiently large $t$, every median of $L(\calD(n_1,\ldots,n_d))$
  will be close to $cNt^{-\lambda}$.
Although the above defined approximate median notion might at first glance
  sound artificial, we will see that it is possible to 
  obtain such type of approximations for the random
  hyper-graph models we are interested on. 

Returning to our discussion, the relevance of 
  the notion of approximate median
  is, when the random hyper-graph model admits a concentration constant, 
  that it allows us to derive concentration bounds around an  
  approximation of the median which in turn will be closed to the mean.
Endowed with such estimates of the mean, we can easily derive the 
  thought after limiting behavior of such expected values.
This in essence, is the crux of our approach to attacking all
  variants of the Largest Non-crossing Matching problem.

Unfortunately, the approximation of $\median{}{L(\calD(n_1,\ldots,n_d))}$
  guaranteed by the existence of a $(c,\lambda,\theta)$-median,
  as in Definition~\ref{def:approximate-median}, holds 
  for the rather restrictive condition 
  $b\sum_{i=1}^{d} n_{i} \leq t^{\theta}$.
However, the monotonicity and block independence properties 
  of random hyper-graph models allow us to relax
  the restriction and still obtain essentially the same conclusion.
More precisely, it will be possible to obtain the same guarantee,
  but requiring only that the sum of the $n_i$'s is not
  too large in comparison with the geometric mean of the $n_i$'s.
Moreover, and of crucial importance, under the same conditions one
  can show that the median and mean of $L(\calD(n_1,\ldots,n_d))$
  are close to each other. 
The following result, which is the main result of this work,
  precisely states the claims made in the preceding informal discussion.
\begin{theorem}{[Main Theorem]}
Let $\calD=(\calD(K_{n_1,\ldots,n_d}))$ be a random hyper-graph model with 
  internal parameter $t$ and concentration constant $h$ which admits 
  a $(c,\lambda,\theta)$-median.
Fix $n_1,\ldots,n_d$ and let $N$ and $S$ denote the geometric mean
  and sum of $n_1,\ldots,n_d$, respectively.
Let $0\leq \eta\leq \min\setof{\lambda/(d-1),\theta-\lambda}$
  and $g=O(t^{\eta})$.

For all $\epsilon>0$ there exists $t_0$ and $A$ sufficiently large 
  such that if $t\geq t_0$ is such that
  $N\geq t^{\lambda}A$ (size constraint) and
  $S\leq g(t)N$ (balance condition), then 
\begin{eqnarray}\label{eqn:maintheo-expec}
(1-\epsilon)\frac{cN}{t^{\lambda}} 
  \ \leq &
  \expec{}{L(\calD(K_{n_1,\ldots,n_d}))} 
  & \leq \
  (1+\epsilon)\frac{cN}{t^{\lambda}}\,,
\end{eqnarray}
and the following hold:
\begin{itemize}
\item If $\mediannopar$ is a median of 
  $\median{}{L(\calD(K_{n_1,\ldots,n_d}))}$, then
\begin{eqnarray}\label{eqn:maintheo-median}
(1-\epsilon)\frac{cN}{t^{\lambda}} 
  \ \leq &
  \mediannopar
  & \leq \
  (1+\epsilon)\frac{cN}{t^{\lambda}}\,.
\end{eqnarray}

\item There is a constant $K>0$ such that 
\begin{eqnarray}
\prob{}{L(\calD(K_{n_1,\ldots,n_d}))\leq (1-\epsilon)\frac{cN}{t^{\lambda}}}
  & \leq & \exp\left(-Kh\epsilon^2\frac{cN}{t^{\lambda}}\right)\,, 
  \label{eqn:maintheo-lower}
\\
\prob{}{L(\calD(K_{n_1,\ldots,n_d}))\geq (1+\epsilon)\frac{cN}{t^{\lambda}}}
  & \leq & 
  \exp\left(-Kh\frac{\epsilon^2}{1+\epsilon}\frac{cN}{t^{\lambda}}\right)\,.
  \label{eqn:maintheo-upper}
\end{eqnarray}
\end{itemize}
Moreover, if $n_{1}=\ldots=n_{d}=n$ and 
  $\calD=(\calD(K_{n}^{(d)}))$ is just a weak random hyper-graph model,
  then the the lower bounds
  in~\eqref{eqn:maintheo-expec} 
  and~\eqref{eqn:maintheo-median}, 
  and inequality~\eqref{eqn:maintheo-lower}, still hold.  
\end{theorem}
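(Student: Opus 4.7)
My plan is to reduce the general case to the regime in which the approximate-median hypothesis applies directly, using monotonicity and block independence as the bridge. I partition each $A_i = [n_i]$ into $m$ consecutive blocks of size $\lfloor n_i/m \rfloor$, choosing $m$ so that each resulting cell, distributed as $\calD(K_{\lfloor n_1/m\rfloor,\ldots,\lfloor n_d/m\rfloor})$ by monotonicity, simultaneously satisfies both conditions of Definition~\ref{def:approximate-median}: its geometric mean $N/m \ge at^\lambda$ and its sum $(S/m)b \le t^\theta$. The hypothesis $\eta \le \theta-\lambda$, together with $N \ge At^\lambda$ for $A$ large, makes the valid range of $m$ non-empty; each cell then has $L$-median within $(1\pm\delta) c(N/m)/t^\lambda$ and inherits the concentration constant $h$.

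For the lower-bound half of \eqref{eqn:maintheo-expec}, \eqref{eqn:maintheo-median}, and the tail \eqref{eqn:maintheo-lower}---the only parts needed in the weak random model with $n_1=\cdots=n_d=n$---I focus only on the $m$ diagonal cells. These are mutually independent by (weak) block independence, and the concatenation of their non-crossing matchings is globally non-crossing, so $L(\calD(K_{n_1,\ldots,n_d})) \ge \sum_{j=1}^m L(B_{j,\ldots,j})$. A Chernoff-type bound on this sum of $m$ independent random variables, each concentrated near $c(N/m)/t^\lambda$ with concentration constant $h$, yields \eqref{eqn:maintheo-lower} with probability $1-\exp(-\Omega(h\epsilon^2 cN/t^\lambda))$.

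For the upper-bound half (which requires the strong random model) I use the full $m^d$-grid of cells. A non-crossing matching restricts to a non-crossing sub-matching in each cell, with total size bounded by an LP over chains in the $m^d$ grid: per-cell contributions are at most the cell's $L$, and per-cell contributions of cells sharing any dimension-$i$ coordinate sum to at most $N/m$ (disjoint vertices of $A_i^{(j)}$). By a union bound over all $m^d$ cells plus per-cell concentration, every per-cell $L$ lies near $c(N/m)/t^\lambda$ with overwhelming probability; combining this with the LP constraints and the balance condition $S \le g(t)N$ yields \eqref{eqn:maintheo-upper}. Finally, \eqref{eqn:maintheo-median} follows from the two tails (any median must lie where both tails are below $1/2$), and \eqref{eqn:maintheo-expec} follows by integrating the exponentially decaying tails.

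The main obstacle is precisely this upper bound: a naive ``chain-length times per-cell $L$'' bound loses a factor of $d$, since chains in the $m^d$ grid can reach length $dm-d+1$. Tightening to $(1+\epsilon)$ requires genuine use of the per-coordinate disjointness constraints and, crucially, the balance condition $\eta \le \lambda/(d-1)$, which caps the imbalance among the $n_i$'s and forces the LP's worst-case chain to deviate by only $o(1)$ from a diagonal traversal, thereby preventing off-diagonal accumulation.
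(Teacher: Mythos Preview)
Your lower-bound argument is essentially the paper's: the paper also partitions into $q=\lceil N/(At^{\lambda})\rceil$ diagonal blocks, uses $L(H)\ge\sum_i L(H_i)$, block independence, per-block concentration around the approximate median, and then a Cauchy--Schwarz step on the sum to get~\eqref{eqn:maintheo-lower}; the median and expectation lower bounds follow as you say.

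The upper bound is where your plan diverges from the paper's and, as you correctly anticipated, has a real gap that your proposed fix does not close. In a \emph{fixed} $m^{d}$-grid every cell has the same shape $K_{\lfloor n_1/m\rfloor,\ldots,\lfloor n_d/m\rfloor}$, hence the same $L\approx M/m$, so the factor-$d$ loss along a length-$(d(m{-}1){+}1)$ chain is present even when $n_1=\cdots=n_d$. Your additional LP constraint (``contributions of cells sharing the $i$-th coordinate sum to at most $n_i/m$'') is slack by a factor $t^{\lambda}/c$: the right-hand side is $n_i/m$, whereas the per-cell cap is only $M/m=cN/(mt^{\lambda})\ll n_i/m$, so the disjointness constraints never bite and the LP value remains $\Theta(dM)$. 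The balance condition $\eta\le\lambda/(d-1)$ does not help here; in the paper that inequality is used in the \emph{lower}-bound step (to control $qS^{d-1}/N^{d}$ when estimating $N'$), not to repair the upper bound.

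What the paper does instead is abandon the fixed grid entirely. For each putative non-crossing matching $J$ of size $m_{\max}=\lceil(1+\epsilon)M\rceil$ it builds a \emph{$J$-dependent} partition $\calP(J)$: consecutive runs of edges of $J$, each run confined to a box of side $\le L=C_{g}t^{\eta+\alpha}$ in every coordinate and containing at most $s_{\max}$ edges. Because these boxes come from a chain, their projections onto each coordinate are pairwise disjoint; H\"older's inequality then gives $\sum_i N_i\le N$ for the per-box geometric means $N_i$, which is exactly what kills the factor $d$. The price is a union bound not over $m^{d}$ cells but over \emph{types} $T=(e_1,\tilde e_1,s_1,\ldots,e_q,\tilde e_q,s_q)$ of such partitions; the paper shows $|\calT|\le\exp(K_1(N/l)\ln l)$ and $P_T\le\exp(-K_2 h\epsilon^2 M/(1+\epsilon))$, and the choice of scale $l=t^{\alpha}$ with $\lambda<\alpha<\theta-\eta$ makes the type count negligible against $P_T$. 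If you want to salvage your approach you would need a mechanism that forces the visited cells' coordinate-projections to be (nearly) disjoint, and a fixed grid cannot provide that.
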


As a consequence of the previously stated Main Theorem, with
  some additional work, we can derive 
  several results concerning the asymptotic behavior of 
  the expected length of a largest non-crossing matching for all
  of the random models introduced above.
Our first two applications of the Main Theorem concern
  the random binomial hyper-graph model $(\calG(K_{n}^{(d)},p))_{n\in\NN}$
  and the random word model $(\Sigma(K_{n}^{(d)},k))_{n\in\NN}$.
The asymptotic behavior of the length of a largest non-crossing 
  hyper-matching for both of these models is (interestingly!) 
  related to a constant $c_{d}$ that arises in the work of
  Bollob\'as and Winkler~\cite{bw88} concerning the height
  of a largest chain among random points  
  independently chosen in the $d$-dimensional unit cube $[0,1]^{d}$.
Specifically, for the random binomial hyper-graph model, we show:
\begin{theorem}\label{th:main-binomial-model}
For $0<p<1$, there exists a constant $\delta_{p}$ such that 
\[
\lim_{n\to \infty} \frac{1}{n}\expec{}{L(\calG(K^{(d)}_{n},p))}
  \ \ = \ \ 
  \inf_{n\in\NN} \frac{1}{n}\expec{}{L(\calG(K^{(d)}_{n},p))}
  \ \ = \ \ 
  \delta_{p}\,,
\]
and $\delta_{p}/\sqrt[d]{p}\to c_{d}$ when $p\to 0$.
\end{theorem}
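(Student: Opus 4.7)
The plan is to prove the theorem in two parts. First, I establish the existence of $\delta_p$ (and its identification with an extremum over $n$) by applying Fekete's lemma to the expectation sequence. Second, I identify the asymptotic behavior $\delta_p/\sqrt[d]{p} \to c_d$ by invoking the Main Theorem with an appropriate approximate median that I build out of the Bollob\'as--Winkler result for random chains in the unit cube.

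For the first part, set $a_n = \expec{}{L(\calG(K^{(d)}_n, p))}$ and show superadditivity $a_{n+m} \geq a_n + a_m$. Partition $[n+m]$ as $A = [n]$ and $B = [n+1,\ldots,n+m]$. By monotonicity, the induced sub-hyper-graphs $H|_{A^d}$ and $H|_{B^d}$ are distributed as $\calG(K^{(d)}_n, p)$ and $\calG(K^{(d)}_m, p)$, and by block independence they are independent. The union of any non-crossing matching $M_1\subseteq H|_{A^d}$ with any $M_2\subseteq H|_{B^d}$ is non-crossing in $H$: every edge of $M_1$ has all coordinates $\leq n$ and therefore $\preceq$ every edge of $M_2$. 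Taking largest matchings in each block and using independence of the two expectations yields superadditivity; since $a_n \leq n$, Fekete's lemma gives $\lim_n a_n/n = \sup_n a_n/n =: \delta_p$ (if a subadditive reformulation is preferred, the same argument delivers $\inf_n a_n/n$, matching the theorem's formulation).

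For the second part, I apply the Main Theorem to $\calG(\cdot, p)$, which has internal parameter $t = 1/p$ and concentration constant $h$ supplied by Talagrand's inequality, as noted earlier in the paper. The crucial step is to exhibit a $(c_d, 1/d, \theta)$-approximate median: for a suitable $\theta$, all sufficiently large $t$, and all $n \geq a(\delta)\, t^{1/d}$, every median of $L(\calG(K^{(d)}_n, p))$ lies in $(1 \pm \delta)\, c_d\, n / t^{1/d} = (1 \pm \delta)\, c_d\, p^{1/d}\, n$. I identify $L(\calG(K^{(d)}_n, p))$ with the length of the longest strictly increasing chain in a Bernoulli-$p$ subset of the lattice $[n]^d$ and couple this process with a Poisson point process of intensity $p$ on the continuous box $[0,n]^d$. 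Rescaling to $[0,1]^d$ turns the continuum process into a Poisson process of intensity $pn^d$, whose longest strictly increasing chain has expected length asymptotic to $c_d (pn^d)^{1/d} = c_d p^{1/d} n$ by Bollob\'as and Winkler~\cite{bw88} (together with a standard de-Poissonization). Sandwiching the Bernoulli chain between chains of slightly denser and slightly sparser Poisson processes controls the discrete-to-continuum error in the regime $p \to 0$ with $pn^d \to \infty$; the median bound then follows from Talagrand concentration around the mean.

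Once the approximate median is in place, the Main Theorem yields $\expec{}{L(\calG(K^{(d)}_n, p))} = (1 \pm \epsilon)\, c_d\, p^{1/d}\, n$ for all small enough $p$ and all $n \geq A t^{1/d}$. Combined with the first part, $\delta_p = \lim_n a_n/n = (1 \pm \epsilon)\, c_d\, p^{1/d}$ whenever $p$ is sufficiently small, so $\delta_p/p^{1/d} \to c_d$ as $p \to 0$. The hard part is the approximate-median step: honestly comparing the Bernoulli lattice model to the continuum Bollob\'as--Winkler setting and controlling the error uniformly in $n$ as $p \to 0$. The subtle obstruction is that coincident coordinates in the lattice produce incomparable pairs of points (an artifact absent from a continuum Poisson process), and one must argue that such coincidences contribute only lower-order terms in the regime $pn^d \to \infty$, $p \to 0$ that is relevant to the application of the Main Theorem.
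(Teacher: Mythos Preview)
Your first part (superadditivity and Fekete) is exactly what the paper does; you even flag the $\sup$/$\inf$ tension that the paper glosses over.

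Your second part diverges from the paper. You propose to build the $(c_d,1/d,\theta)$-median by coupling the Bernoulli lattice process with a Poisson process on $[0,n]^d$, rescaling to $[0,1]^d$, invoking Bollob\'as--Winkler, and then de-Poissonizing. The paper instead passes from $H\sim\calG(K_{n_1,\ldots,n_d},p)$ to the subgraph $H'$ obtained by deleting every edge that meets a vertex of degree $\geq 2$; conditioned on $|E(H')|=m$, the quantity $L(H')$ is \emph{exactly} distributed as $lis_d(m)$, the longest common increasing subsequence of $d-1$ uniform permutations. The paper then controls $|E|$, $|E'|$, and $|E\setminus E'|$ by elementary moment bounds (Lemma~\ref{lem:binomial-prep}) and invokes the Bollob\'as--Brightwell concentration for $lis_d$ (Theorem~\ref{th:bb}, Corollary~\ref{cor:lis-concentration}) to pin the median near $c_d N p^{1/d}$ (Proposition~\ref{prop:binomial}, Corollary~\ref{cor:random-binomial-median}). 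Note in particular that the ``coincident coordinates'' obstruction you identify is precisely what the $H\to H'$ step removes: points sharing a coordinate are exactly edges through a vertex of degree $\geq 2$, and the paper shows $\expec{}{|E\setminus E'|}\leq N^d S^{d-1}p^2$, which is lower order than $M=c_dNp^{1/d}$ under the size upper bound condition.

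Both routes are viable. The paper's has two advantages: it avoids any continuum coupling and de-Poissonization, and it plugs directly into an off-the-shelf concentration inequality for $lis_d$ rather than having to redo Talagrand for the Poisson chain. Your route is more geometric and would presumably extend to non-product base measures, but as you yourself note, the sandwiching of the discrete Bernoulli chain between two Poisson chains---and the uniform control of the error as $p\to 0$ while $pn^d\to\infty$---is the entire content, and you have not carried it out. If you want a complete proof along your lines you will end up re-deriving something equivalent to the $|E\setminus E'|$ estimate anyway; it is cleaner to do it the paper's way.
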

For the case where the underlying model is the one that 
  arises when interested in the length of a longest common subsequence
  of $d$ randomly chosen words over a finite alphabet, i.e.~the 
  random $d$-word model, we establish:
\begin{theorem}\label{th:main-word-model}
For $k\in\NN$, there exists a constant $\gamma_{k}$ such that 
\[
\lim_{n\to \infty} \frac{1}{n}\expec{}{L(\Sigma(K^{(d)}_{n},k))}
  \ \ = \ \ 
  \inf_{n\in\NN} \frac{1}{n}\expec{}{L(\Sigma(K^{(d)}_{n},k))}
  \ \ = \ \ 
  \gamma_{k}\,,
\]
and $k^{1-1/d}\gamma_{k}\to c_{d}$ when $k\to\infty$.
\end{theorem}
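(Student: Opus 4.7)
The plan has two parts: first establishing the existence of $\gamma_k$ via Fekete's lemma, then pinning down its asymptotic behavior via the Main Theorem. For existence, observe that by the monotonicity and block independence of the word model, restricting $\Sigma(K_{n+m}^{(d)}, k)$ to the two disjoint blocks $[1,n]$ and $[n+1,n+m]$ within each color class yields two independent hyper-subgraphs distributed as $\Sigma(K_n^{(d)}, k)$ and $\Sigma(K_m^{(d)}, k)$. Any non-crossing hyper-matchings found inside these blocks concatenate into a non-crossing hyper-matching of $\Sigma(K_{n+m}^{(d)}, k)$, since vertex-disjointness across blocks is automatic and every first-block edge lies below every second-block edge under $\preceq$. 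Taking expectations yields super-additivity $\expec{}{L(\Sigma(K_{n+m}^{(d)},k))} \ge \expec{}{L(\Sigma(K_n^{(d)},k))} + \expec{}{L(\Sigma(K_m^{(d)},k))}$, and Fekete's lemma supplies the limit $\gamma_k$.

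To obtain $k^{1-1/d}\gamma_k \to c_d$, I would invoke the Main Theorem on $\Sigma(K_{n_1,\ldots,n_d}, k)$ with internal parameter $t=k$, targeting $\lambda = 1-1/d$ and $c = c_d$. Two hypotheses have to be verified. First, $\Sigma$ manifestly satisfies Definition~\ref{def:random}: letters are i.i.d.\ per vertex, so both monotonicity and block independence are immediate. Second, $\Sigma$ has a concentration constant: $L(\Sigma)$ is a $1$-Lipschitz function of the $n_1 + \cdots + n_d$ independent letter assignments, and any non-crossing hyper-matching of size $\ell$ is certified by exactly $d\ell$ letters, so Talagrand's inequality (as stated in~\cite[Theorem~2.29]{jlr00}) yields a universal concentration constant $h$.

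The main obstacle is establishing that $\Sigma$ admits a $(c_d, 1-1/d, \theta)$-approximate median for some $\theta > 1 - 1/d$. My plan is to couple $\Sigma(K_{n_1,\ldots,n_d}, k)$ with a Bernoulli-style process of intensity $p \eqdef 1/k^{d-1}$. The crucial observation is that the events ``all letters on edge $e$ coincide'' are \emph{mutually independent} across any family of pairwise vertex-disjoint $e$'s, each occurring with probability $p$; consequently, for every fixed chain of vertex-disjoint edges, the probability that the chain lies entirely in $E(\Sigma)$ matches its probability in $\calG(K_{n_1,\ldots,n_d}, p)$. A first-moment / union bound over all candidate chains upper bounds the median, while a second-moment (or greedy chain-building) argument on disjoint chains lower bounds it. Passing to the continuous limit and invoking Bollob\'as--Winkler~\cite{bw88} (the longest chain among $m$ i.i.d.\ points in $[0,1]^d$ has expected length asymptotic to $c_d m^{1/d}$) with $m \approx N^d / k^{d-1}$ produces the target median approximation $c_d N / k^{1-1/d}$. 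The hard part is making this coupling rigorous despite the genuine correlations between \emph{overlapping} match events, and converting the continuum Bollob\'as--Winkler asymptotic into a sharp discrete bound uniformly in the admissible regime $N \ge a k^{1-1/d}$, $Sb \le k^\theta$. Once this approximate median is in hand, the Main Theorem applied with $n_1 = \cdots = n_d = n$ gives $(1-\epsilon)c_d/k^{1-1/d} \le \expec{}{L(\Sigma(K_n^{(d)},k))}/n \le (1+\epsilon)c_d/k^{1-1/d}$ for all $n \ge k^{1-1/d}A(\epsilon)$ and all sufficiently large $k$; letting $n\to\infty$ first and then $\epsilon\to 0$ yields $k^{1-1/d}\gamma_k \to c_d$ as $k \to \infty$.
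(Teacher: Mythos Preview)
Your overall architecture---superadditivity via block independence giving $\gamma_k$ by Fekete, then the Main Theorem with $c=c_d$, $\lambda=1-1/d$ for the asymptotic---is exactly the paper's. The concentration-constant step via Talagrand is also the same (the paper records $h=1/(4d)$ in Proposition~\ref{prop:model-concentration-constants}).

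The divergence, and the gap, is in how you propose to obtain the $(c_d,1-1/d,\theta)$-median. Your plan is to exploit that match events on vertex-disjoint edges are independent with probability $k^{-(d-1)}$, run first/second-moment bounds, and then ``pass to the continuous limit'' to import Bollob\'as--Winkler. But a first-moment union bound over chains only yields an upper bound of order $eN/k^{1-1/d}$, not $c_dN/k^{1-1/d}$; and neither a second-moment argument nor a greedy construction gives you $m\approx N^d/k^{d-1}$ \emph{independent uniform points} in $[0,1]^d$---the correlations you flag as ``the hard part'' are precisely what blocks a direct appeal to~\cite{bw88}. So as written, the tight constant $c_d$ is not actually produced by your scheme.

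The paper sidesteps this entirely. Given $H\sim\Sigma(K_{n_1,\ldots,n_d},k)$, it passes to the subgraph $H'$ obtained by deleting every edge incident to a vertex of degree $\ge 2$. The key point is that, conditionally on $|E(H')|=m$, the edge set of $H'$ is a uniformly random set of $m$ pairwise vertex-disjoint edges, so $L(H')$ is \emph{exactly} distributed as $lis_d(m)$; one can then invoke Corollary~\ref{cor:lis-concentration} directly. What remains is purely quantitative: Lemma~\ref{lem:word-prep} shows $\expec{}{|E'|}=N^dk^{-(d-1)}((k-1)/k)^{S-d}$ is close to $N^d/k^{d-1}$ and that $|E\setminus E'|$ is small when $S\ll k$, which is exactly where the size upper-bound condition $Sb\le k^\theta$ (with $\theta=1-1/d+1/d^2$) enters. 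This reduction-to-$H'$ device converts the correlated word model into an honest instance of Ulam's problem and is the missing idea in your proposal.
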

The $d=2$ case of Theorems~\ref{th:main-binomial-model} 
  and~\ref{th:main-word-model} were already established
  by Kiwi, Loebl, Matou\v{s}ek~\ref{klm05}.
This work generalizes and strengthens the arguments
  developed in~\ref{klm05}, as well as elicits new connections
  with other previously studied problems (most notably 
  in~\cite{bw88}).

\medskip
Finally, we consider the symmetric versions of random
  graph models introduced above and 
  show how the Main Theorem, plus some additional 
  observations, allows one to characterize some aspects of 
  the asymptotic behavior of the length of a longest non-crossing
  matching.
Specifically, we prove the following two results.
\begin{theorem}\label{th:main-random-symm}
For $0<p<1$, there exists a constant $\sigma_{p}$ such that 
\[
\lim_{n\to \infty} \frac{1}{n}\expec{}{L(\calS(K_{n,n},p))}
  \ \ = \ \ 
  \inf_{n\in\NN} \frac{1}{n}\expec{}{L(\calS(K_{n,n},p))}
  \ \ = \ \ 
  \sigma_{p}\,,
\]
and $\sigma_{p}/\sqrt{p}\to 2$ when $p\to 0$.
\end{theorem}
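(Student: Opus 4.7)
The plan is to prove this in two parts: (i) existence of the limit $\sigma_p = \lim_n \E[L(\calS(K_{n,n},p))]/n$, and (ii) the small-$p$ asymptotic $\sigma_p/\sqrt{p}\to 2$.

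For part (i), I would use superadditivity. Restricting $\calS(K_{n+m,n+m},p)$ to its top-left $[n]\times[n]$ block and to its bottom-right $\{n+1,\ldots,n+m\}^{2}$ block yields two subgraphs that depend on disjoint collections of the defining iid Bernoullis $X_{\{i,j\}}$, hence are independent; by construction each is distributed as $\calS(K_{n,n},p)$, resp.\ $\calS(K_{m,m},p)$; and every edge of the first block is $\preceq$ every edge of the second. Concatenating optimal non-crossing matchings from the two blocks therefore gives $\E[L(\calS(K_{n+m,n+m},p))] \geq \E[L(\calS(K_{n,n},p))] + \E[L(\calS(K_{m,m},p))]$, and Fekete's lemma supplies the limit~$\sigma_p$.

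For part (ii) I apply the Main Theorem to $\calS$ (a weak random model, per the excerpt) with internal parameter $t=1/p$. Concentration is straightforward: $L(\calS(K_{n,n},p))$ is a $2$-Lipschitz, $1$-certifiable function of the iid Bernoulli$(p)$ family $\{X_{\{i,j\}} : 1\leq i\leq j\leq n\}$ (each $X$ affects at most two edges $(i,j),(j,i)$, and a matching of size $\ell$ is witnessed by $\ell$ edges), so Talagrand's inequality provides a universal concentration constant. For the lower bound in the claimed $(2,1/2,\theta)$-approximate median, I partition $[n]$ into $m$ equal consecutive blocks $I_1,\ldots,I_m$ and use the staircase of off-diagonal rectangles $I_i\times I_{i+1}$, $i=1,\ldots,m-1$. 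Each rectangle lies strictly above the diagonal and is disjoint from its own reflection, so the edges of $\calS$ inside are iid Bernoulli$(p)$ and the induced subgraph is distributed as $\calG(K_{n/m,n/m},p)$; consecutive rectangles share no row or column and respect $\preceq$, so their optimal matchings concatenate into a non-crossing matching of $\calS(K_{n,n},p)$. Applying Theorem~\ref{th:main-binomial-model} with $d=2$ inside each rectangle and concentrating yields a matching of length at least $(1-o(1))(m-1)\gamma_p\cdot n/m$ with high probability; letting $m\to\infty$ and then $p\to 0$, together with $\gamma_p/\sqrt{p}\to 2$, delivers the lower bound with the sharp constant~$2$.

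The main obstacle is the upper bound on the approximate median. A key combinatorial observation is that no increasing chain in $\calS$ can contain both $(i,j)$ and $(j,i)$ with $i\neq j$ (this would require $i<j$ and $j<i$ simultaneously), so each length-$\ell$ chain depends on $\ell$ distinct, mutually independent variables $X_{\{i,j\}}$; consequently the expected number of length-$\ell$ chains in $\calS$ equals $\binom{n}{\ell}^{2}p^{\ell}$, identical to the $\calG$ count, and a Markov bound transfers verbatim. Unfortunately, this raw first moment only yields the weaker constant $e$ rather than $2$. To reach $c=2$, I would combine Sepp\"al\"ainen's~\cite{seppalainen97} sharp $\gamma_p=2\sqrt{p}/(1+\sqrt{p})$ with the decomposition $L(\calS)\leq L^{\mathrm{above}}(\calS)+L^{\mathrm{below}}(\calS)+L^{\mathrm{diag}}(\calS)$ (each summand being a chain in a region where the symmetry constraint is vacuous, and hence controllable by the non-symmetric theory) and a Poissonization argument reducing the continuous limit to the Bollob\'as--Winkler~\cite{bw88} point-process picture in $[0,1]^{2}$, whose constant $c_2=2$ is precisely the one appearing in Theorem~\ref{th:main-binomial-model}. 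Once the $(2,1/2,\theta)$-approximate median is in place, the Main Theorem together with part~(i) gives $\sigma_p=2\sqrt{p}(1+o(1))$ as $p\to 0$, which is the claimed asymptotic.
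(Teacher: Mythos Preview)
Your part~(i) via superadditivity and Fekete is exactly what the paper does.

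For part~(ii), your lower bound via the staircase of off-diagonal $\calG$-blocks is a legitimate alternative to the paper's route (the paper instead establishes a $(2,1/2,3/4)$-median by passing to the reduced graph $G'$, identifying it with a random fixed-point-free involution, and invoking the Baik--Rains asymptotics). The real gap is in your upper bound. The decomposition $L(\calS)\le L^{\mathrm{above}}+L^{\mathrm{below}}$ is valid (there are no diagonal edges in $\calS$), but each summand is only stochastically dominated by $L(\calG(K_{n,n},p))$, so you obtain $\sigma_p\lesssim 2\delta_p\sim 4\sqrt{p}$, a factor of~$2$ too large. The Poissonization allusion is too vague to close this, and since $\calS$ is merely a \emph{weak} random model, the Main Theorem supplies only the lower half of~\eqref{eqn:mainsymm-expec}--\eqref{eqn:mainsymm-median}, so you genuinely need a separate sharp upper bound.

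You were one step away. Your observation that a chain cannot contain both $(i,j)$ and $(j,i)$ can be upgraded: the map $(x,y)\mapsto(\min\{x,y\},\max\{x,y\})$ sends \emph{any} non-crossing matching of the symmetric graph to a non-crossing matching of the same size supported entirely above the diagonal (one checks case by case that comparability is preserved and that no collisions of endpoints occur). Hence $L(\calS(K_{n,n},p))$ and $L(\calO(K_{n,n},p))$ are identically distributed, where $\calO$ retains only edges $(i,j)$ with $i<j$; and since $\calO$ is obtained from $\calG(K_{n,n},p)$ by deleting edges, $L(\calO)\le L(\calG)$ under the obvious coupling. The upper bounds of Theorem~\ref{th:random-binomial} then transfer verbatim to $\calS$ with the sharp constant $c_2=2$, with no decomposition or Poissonization needed. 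This is precisely the content of the paper's Lemma~\ref{lem:symm-oriented}.
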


\begin{theorem}\label{th:main-random-anti-symm}
For $0<p<1$, there exists a constant $\alpha_{p}$ such that 
\[
\lim_{n\to \infty} \frac{1}{2n}\expec{}{L(\calA(K_{2n,2n},p))}
  \ \ = \ \ 
  \inf_{n\in\NN} \frac{1}{2n}\expec{}{L(\calA(K_{2n,2n},p))}
  \ \ = \ \ 
  \alpha_{p}\,,
\]
and $\alpha_{p}/\sqrt{p}\to 2$ when $p\to 0$.
\end{theorem}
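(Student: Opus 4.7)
My plan is to sandwich $\ell_n := \frac{1}{2n}\expec{}{L(\calA(K_{2n,2n},p))}$ between two quantities that both tend to $\delta_p$ (the constant from Theorem~\ref{th:main-binomial-model} in the $d=2$ case), and then invoke $\delta_p/\sqrt{p}\to 2$ to conclude. The key structural facts are: the involution $\iota(i,j)=(2n-i+1,2n-j+1)$ pairs the upper-left block $\mathrm{UL}=[1,n]\times[1,n]$ with the lower-right block $\mathrm{LR}=[n+1,2n]\times[n+1,2n]$, so $H|_\mathrm{UL}\sim\calG(K_{n,n},p)$ and $H|_\mathrm{LR}=\iota(H|_\mathrm{UL})$; and the top half $H_\mathrm{top}:=H|_{[1,n]\times[1,2n]}$ contains exactly one edge from each present $\iota$-pair, so $H_\mathrm{top}\sim\calG(K_{n,2n},p)$ while the bottom half equals $\iota(H_\mathrm{top})$.

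For the lower bound I would use diagonal embedding: if $M$ is any non-crossing matching of $H|_\mathrm{UL}$, then $\iota(M)\subseteq H|_\mathrm{LR}$ is a non-crossing matching of the same size, and since $\mathrm{UL}\preceq\mathrm{LR}$ blockwise the union $M\cup\iota(M)$ is a non-crossing matching of $\calA(K_{2n,2n},p)$ of size $2|M|$. Hence $L(\calA(K_{2n,2n},p))\ge 2L(\calG(K_{n,n},p))$, and Theorem~\ref{th:main-binomial-model} gives $\ell_n\ge\frac{1}{n}\expec{}{L(\calG(K_{n,n},p))}\ge\delta_p$ for every $n$.

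For the upper bound I would split an optimal non-crossing matching $M$ along the middle row: letting $c$ be the largest column used by its top portion $M_\mathrm{top}$, the top portion lies in $[1,n]\times[1,c]$ and the bottom portion $M_\mathrm{bot}$ lies in $[n+1,2n]\times[c+1,2n]$; applying $\iota$ shows that $|M_\mathrm{bot}|$ equals the length of a non-crossing matching of $H_\mathrm{top}$ in $[1,n]\times[1,2n-c]$. Therefore
\[
L(\calA(K_{2n,2n},p))\le\max_{c\in[0,2n]}(L_c+\tilde L_c),
\]
where $L_c:=L(H_\mathrm{top}|_{[1,n]\times[1,c]})$ and $\tilde L_c:=L(H_\mathrm{top}|_{[1,n]\times[1,2n-c]})$. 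Applying the Main Theorem to the sub-blocks $\calG(K_{n,c},p)$ and $\calG(K_{n,2n-c},p)$ (with the trivial bound $L\le c$ handling the unbalanced tails) gives $\expec{}{L_c+\tilde L_c}\le\delta_p\bigl(\sqrt{nc}+\sqrt{n(2n-c)}\bigr)(1+o_n(1))$, maximized at $c=n$ with value $2\delta_p n(1+o_n(1))$. Because $L_c+\tilde L_c$ is $2$-Lipschitz in the edge indicators of the single random graph $H_\mathrm{top}$, the concentration inequality~\eqref{eqn:maintheo-upper} of the Main Theorem applied separately to $L_c$ and $\tilde L_c$, together with a union bound over the $O(n)$ choices of $c$, yields $\expec{}{L(\calA(K_{2n,2n},p))}\le 2\delta_p n+O(\sqrt{\delta_p n\log n})$, i.e., $\ell_n\le\delta_p+o_n(1)$.

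Combining the two bounds gives $\lim_n\ell_n=\delta_p$, and since $\ell_n\ge\delta_p$ for every $n$ one also has $\inf_n\ell_n=\delta_p$; setting $\alpha_p:=\delta_p$ yields the claim, while $\alpha_p/\sqrt{p}\to 2$ as $p\to 0$ is immediate from Theorem~\ref{th:main-binomial-model}. The main obstacle is the concentrated-maximum step in the upper bound: $L_c$ and $\tilde L_c$ are strongly dependent (both functions of the same graph $H_\mathrm{top}$), and $c$ ranges over both the balanced regime where the Main Theorem gives sharp estimates and unbalanced tails where only trivial bounds apply. The Talagrand-type concentration of~\eqref{eqn:maintheo-upper} is exactly what lets the union bound over $c$ cost only a logarithmic factor, keeping the error $O(\sqrt{\delta_p n\log n})=o(\delta_p n)$ for fixed $p$ and $n\to\infty$.
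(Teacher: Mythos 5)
Your lower bound is fine: the coupling $H|_{[1,n]\times[1,n]}\sim\calG(K_{n,n},p)$, the reflection $\iota(M)$, and the blockwise comparability give $L(\calA(K_{2n,2n},p))\geq 2L(\calG(K_{n,n},p))$, hence $\ell_n\geq\delta_p$ for all $n$. The deterministic decomposition $L(\calA(K_{2n,2n},p))\leq\max_{c}\bigl(L_c+\tilde L_c\bigr)$ over the top half $H_{\mathrm{top}}\sim\calG(K_{n,2n},p)$ is also valid. The gap is in the step where you claim that ``applying the Main Theorem to the sub-blocks'' yields $\expec{}{L_c+\tilde L_c}\leq\delta_p\bigl(\sqrt{nc}+\sqrt{n(2n-c)}\bigr)(1+o_n(1))$. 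The Main Theorem (and Theorem~\ref{th:random-binomial}) is a statement about the regime where the internal parameter $t=1/p$ is large: it requires $p\leq p_0(\epsilon)$, and its normalization is $c_dN p^{1/d}=2\sqrt{nc}\,\sqrt{p}$, not $\delta_p\sqrt{nc}$. For a fixed $p\in(0,1)$ --- which is what Theorem~\ref{th:main-random-anti-symm} is about --- the paper gives you no upper bound on $\expec{}{L(\calG(K_{n,c},p))}$ of the form $(1+o(1))\delta_p\sqrt{nc}$, and indeed $\delta_p<2\sqrt p$ in general, so even in the small-$p$ regime the available bound does not close your sandwich: you would get $\ell_n\lesssim 2\sqrt p$ against the lower bound $\delta_p$, which proves nothing about convergence at fixed $p$. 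The rectangular bound you need (that at fixed $p$ the constant for an $n_1\times n_2$ block is at most $\delta_p\sqrt{n_1n_2}$, i.e.\ that the square maximizes the limit at fixed geometric mean) is a genuine limit-shape statement that is neither in the paper nor proved in your sketch; the same mismatch infects your concentration/union-bound step, since \eqref{eqn:maintheo-upper} controls deviations above the $2\sqrt p$-level centering, again only for small $p$.

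Because you replaced the existence argument by this sandwich, the failure of the upper bound leaves the very existence of $\lim_n\ell_n$ (and its identification with $\inf_n\ell_n$) unproved for fixed $p$; note also that you are implicitly proving the stronger assertion $\alpha_p=\delta_p$, which the theorem does not claim. The paper's route is different: existence of the limit comes from a Fekete-type superadditivity argument, exactly as in the proof of Theorem~\ref{th:main-random-symm} (with some care, since $(\calA(K_{2n,2n},p))$ is not even a weak random model), and the asymptotics $\alpha_p/\sqrt p\to 2$ comes from the separate small-$p$ estimate of Theorem~\ref{th:random-anti-symm}, i.e.\ $\expec{}{L(\calA(K_{2n,2n},p))}\approx 4n\sqrt p$ for $p\leq p_0$ and $n\geq A/\sqrt p$. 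Your top-half decomposition is actually a promising ingredient for proving that small-$p$ estimate (it reduces the anti-symmetric upper tail to rectangular blocks of a genuine binomial model, where the Main Theorem does apply), but as written it cannot substitute for the superadditivity step at fixed $p$.
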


\subsection{Preliminaries}
For future reference we determine below concentration constants
  for the binomial and word models.
\begin{proposition}\label{prop:model-concentration-constants}
The $d$-dimensional binomial random hyper-graph model admits a 
  concentration constant of $1/4$.
The random $d$-word model admits a concentration constant of 
  $1/(4d)$.
\end{proposition}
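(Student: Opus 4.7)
The plan is to invoke Talagrand's inequality in the form of Theorem~2.29 of \cite{jlr00}. Recall that if $X = f(Z_1,\ldots,Z_N)$ with independent $Z_i$'s is $c$-Lipschitz (changing one coordinate changes $X$ by at most $c$) and the event $\{X \geq r\}$ is certifiable by examining at most $\psi(r)$ coordinates, then for any median $m$ of $X$ and any $t \geq 0$,
\[
\prob{}{X \leq m - t} \leq 2 \exp\Big(-\tfrac{t^2}{4c^2 \psi(m)}\Big), \qquad \prob{}{X \geq m + t} \leq 2 \exp\Big(-\tfrac{t^2}{4c^2 \psi(m+t)}\Big).
\]
Setting $t = sm$ gives precisely the form in the definition of concentration constant, with the exponents $-s^2 m / (4c^2 \psi(m)/m)$ and $-s^2 m / (4c^2(1+s)\psi((1+s)m)/((1+s)m))$. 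Thus, to establish a concentration constant $h$ for each model, it suffices to identify the independent coordinates, verify the Lipschitz constant $c$, and bound the ratio $\psi(r)/r$ uniformly in $r$.

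For $\calG(K_{n_1,\ldots,n_d},p)$, the independent coordinates are the $\prod_i n_i$ Bernoulli$(p)$ edge indicators. Adding or removing a single edge changes the size of a largest non-crossing hyper-matching by at most one (one either adjoins that edge to or deletes it from an optimal matching), so $c = 1$. Moreover, any non-crossing matching of size $r$ is certified by the $r$ edge indicators it uses, so $\psi(r) = r$. Substituting $c = 1$ and $\psi(r)/r = 1$ yields $h = 1/4$.

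For $\Sigma(K_{n_1,\ldots,n_d},k)$, the independent coordinates are the letters assigned to the $\sum_i n_i$ vertices. The key observation is that in any non-crossing hyper-matching each vertex lies in at most one edge: thus, if one changes the label at a single vertex $v$, one may simply delete the (at most one) edge incident to $v$ from an optimal matching to obtain a non-crossing matching valid under the new labeling, since every remaining edge only depends on labels of vertices other than $v$. This gives $|L(H) - L(H')| \leq 1$, hence $c = 1$. A non-crossing matching of size $r$ is certified by the labels of its $dr$ incident vertices (any relabeling that agrees there leaves the $r$ edges monochromatic and still forming a non-crossing matching), so $\psi(r) = dr$. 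With $c = 1$ and $\psi(r)/r = d$, the resulting concentration constant is $h = 1/(4d)$.

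The only step requiring any real care is the Lipschitz property in the word model: a single letter change at $v$ may destroy or create many hyper-edges incident to $v$, but because at most one edge of any non-crossing matching can contain $v$, the quantity $L$ still changes by at most one. With this observation in place, both claimed concentration constants follow as direct applications of Talagrand's inequality.
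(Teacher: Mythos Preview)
Your proposal is correct and follows essentially the same approach as the paper: both invoke Talagrand's inequality (Theorem~2.29 of \cite{jlr00}) after checking the $1$-Lipschitz property and certifiability with $\psi(r)=r$ (binomial) and $\psi(r)=dr$ (word). In fact you are more thorough than the paper, which explicitly treats only the binomial case and leaves the word model ``similar and left to the reader''; your observation that a vertex relabeling may affect many hyper-edges yet still changes $L$ by at most one (since a non-crossing matching uses each vertex at most once) is exactly the point needed to fill in that gap.
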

\begin{proof}
Let $H$ be chosen according to $\calG(K_{n_1,\ldots,n_d},p)$. 
Since $L(H)$ depends exclusively on whether or not an edge appears
  in $H$ (and by independence among these events), it follows 
  that $L(H)$ is $1$-Lipschitz, i.e.~$|L(H)-L(H\triangle\setof{e})|\leq 1$.
Moreover, if $L(H)\geq r$, then there is a set of $r$ edges that are
  a witness for the fact that $L(H)\geq r$, for every $H$ containing
  such a set of $r$ edges.
A direct application of Talagrand's inequality (as stated 
  in~\cite[Theorem~2.29]{jlr00}) proves the claim about the 
  concentration constant for the $d$-dimensional binomial random
  hyper-graph model.
The case of the random $d$-word model is similar and left to the 
  reader to verify.
\end{proof}

\subsection{Organization:}
For the sake of clarity of exposition 
  and given that the arguments employed are different,
  we prove in separate sections 
  the lower and upper bounds (as well as lower and upper tail bounds)
  of the Main Theorem's statement.
Specifically, in Section~\ref{sec:lower}, we establish all the lower bounds 
  and lower tail bounds claimed in the Main Theorem.
In Section~\ref{sec:upper}, we prove the upper bounds 
  and upper tail bounds stated in the Main Theorem, 
  thence completing its proof.
Finally, in Section~\ref{sec:appl}, we apply the Main Theorem 
  to four distinct scenarios.
Specifically, we consider the cases where the underlying 
  random model is the binomial random hyper-graph
  model, the random word model, the symmetric binomial
  random graph model, and the anti-symmetric binomial random
  graph model.

\section{Lower bounds}\label{sec:lower}
In this section we will establish the lower bounds claimed in 
  the statement of the Main Theorem, i.e.~the lower bounds
  in~\eqref{eqn:maintheo-expec} 
  and~\eqref{eqn:maintheo-median}, 
  and inequality~\eqref{eqn:maintheo-lower}.

Let $\calD$, $c$, $\lambda$, $\theta$, $\eta$, and $\epsilon$ be 
  as in the statement of the Main Theorem.
Let $\delta >0$ be sufficiently small so
\begin{equation}\label{eqn:def-delta}\tag{Definition of $\delta$}
(1-\delta)^{2}(1-2\delta) \ \geq \ 1-\frac{\epsilon}{2}\,,
\end{equation} 
and let $a=a(\delta)$, $b=b(\delta)$ and $t'=t'(\delta)$ 
  as guaranteed by the definition of $(c,\lambda,\theta)$-median.

Since $g=O(t^{\eta})$, there are constants $C_g>1$ 
  and $t_g\geq 0$ such that $g(t)\leq C_gt^{\eta}$ for all $t\geq t_g$.
Choose $A$ large enough so 
\begin{eqnarray}\label{eqn:choiceA-lower}
A & \geq & \max\setof{
  \frac{2a}{1-\delta}, 
  \frac{2}{1-(1-\delta)^{d}}C_{g}^{d-1}t_{g}^{\eta(d-1)-\lambda},
  \frac{2}{h\delta^2c}\ln(2/\delta),
  \frac{16\ln(2)}{hc\epsilon^{2}}}\,.
\end{eqnarray}
Choose $t_0>\max\setof{t_{g},t'(\delta)}$ sufficiently large so that for all 
  $t\geq t_0$,
\begin{eqnarray}\label{eqn:sizes1}
g(t)\ \leq \ C_{g}t^{\eta} 
  & \text{ and } &
  C_{g}bAt^{\eta} \ \leq \ t^{\theta-\lambda}\,.
\end{eqnarray}
Now, assume $t>t_0$ and that the geometric mean $N$ and sum $S$
  of $n_1,\ldots,n_d$ satisfy the size and balance conditions.
Thus, the size constraint and balance condition guarantee that 
\begin{eqnarray}\label{eqn:sizes2}
N\ \geq \ At^{\lambda} 
  & \text{ and } &
  S \ \leq \ g(t)N \ \leq \ C_{g}Nt^{\eta}\,.
\end{eqnarray}
Finally, assume $H$ is chosen according to $\calD(K_{n_1,\ldots,n_d})$.

If the $n_i$'s satisfy the size conditions of 
  the definition of a $(c,\lambda,\theta)$-median
  and since the model admits a concentration constant, then
  we would have a concentration bound around 
  $cNt^{\lambda}$ for $L(H)$.
Unfortunately, when some of the $n_i$'s are large, then $S$ will
  be large, and the size upper bound condition need not be 
  satisfied, leaving us without the desired concentration bound.
To overcome this situation, we break apart $H$ into hyper-subgraphs
  $H_{1},\ldots, H_{q}$
  of roughly the same size which we will refer to as \emph{blocks}.
The blocks will be vertex disjoint, the proportion between the sizes
  of the color classes in each $H_i$ will be roughly the same
  than the one in $H$.
However, the crucial new aspect is that the size upper bound
  condition will be satisfied in each block $H_{i}$ allowing
  us to derive a concentration bound for $L(H_i)$.
This will later allow us to obtain a concentration bound for
  $L(H)$, details follow.

Let $q=\lceil N/(At^{\lambda})\rceil$. 
For each $1\leq j\leq d$, let $n_j'=\lfloor n_j/q\rfloor$.
Henceforth, let $N'$ and $S'$ denote the geometric mean and the sum 
  of the $n'_j$'s.
Denote the $j$-th color class of $H$ by~$A_j$.
Recall that $A_j$ is totally ordered. 
Let $A_{j,1}$ be the first $n'_j$ elements of $A_{j}$, 
    $A_{j,2}$ be the following $n'_j$ elements of $A_{j}$, 
    so on and so forth up to defining $A_{j,q}$.
Clearly, the $A_{j,i}$'s are disjoint, but do not necessarily cover
  all of $A_{j}$.
Now, for $1\leq i\leq q$, define $H_{i}$ as the hyper-subgraph induced
  by $H$ in $A_{1,i}\times\ldots\times A_{d,i}$ 
  (for an illustration, see Figure~\ref{fig:block-partition}).
Observe that the proportion between the sizes of the color classes
  of $H_i$ is roughly the same as the one among the 
  color classes of $H$.
\begin{figure}
\begin{center}
\ifpdf\input{hyper-blocks.pdf_t}\else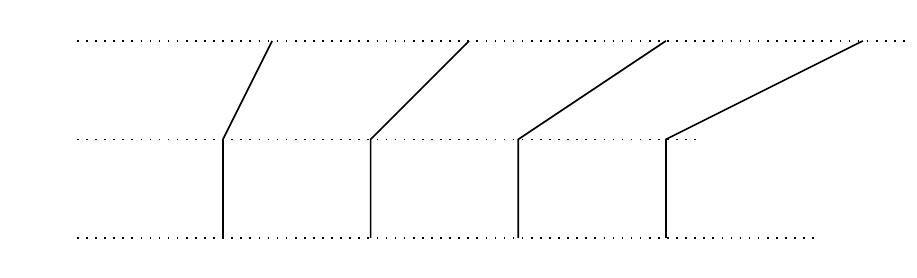\fi
\end{center}
\caption{Illustration, for $d=3$ and $q=4$, of the construction of blocks
  $H_1,\ldots, H_q$.}\label{fig:block-partition}
\end{figure}

Note that by monotonicity, the distribution of 
  $H_{i}$ is $\calD(K_{n'_1,\ldots,n'_d})$.
Moreover, since the $H_{i}$'s are disjoint, by block independence, 
  their distributions.
It follows that $L(H_1),\ldots,L(H_q)$ are independent random variables.
A crucial, although trivial, observation is that 
\begin{eqnarray}\label{eqn:Hsubad}
L(H) & \geq & \sum_{i=1}^{q} L(H_i)\,.
\end{eqnarray}  
On the other hand, by definition of $q$ and the size constraint condition,
\begin{equation}\label{eqn:estimate-q}\tag{Estimate of $q$}
\frac{N}{At^{\lambda}} 
  \ \leq \ q 
  \ \leq \ \frac{N}{At^{\lambda}} + 1
  \ \leq \ \frac{2N}{At^{\lambda}}\,.
\end{equation}

In order to estimate the geometric mean $N'$ of 
  $n'_{1},\ldots,n'_{d}$, the following result will be useful.
\begin{lemma}
If $x_1,\ldots,x_d$ are positive real numbers, then
\begin{eqnarray*}
\prod_{j=1}^{d}(x_j-1) & \geq & \prod_{j=1}^{d} x_{j}
  - \left(\sum_{j=1}^{d} x_{j}\right)^{d-1}\,.
\end{eqnarray*}
\end{lemma}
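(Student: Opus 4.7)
The plan is to chain the Weierstrass product inequality with a multinomial bound on the elementary symmetric polynomial. First, I would rewrite
\[
\prod_{j=1}^d (x_j-1) \;=\; \Big(\prod_{j=1}^d x_j\Big)\prod_{j=1}^d\Big(1-\tfrac{1}{x_j}\Big),
\]
apply the Weierstrass product inequality $\prod_j(1-1/x_j) \geq 1 - \sum_j 1/x_j$ to the second factor, and multiply back through $\prod_j x_j$, using the identity $\prod_j x_j \cdot \sum_j (1/x_j) = \sum_k \prod_{j\neq k} x_j$, to obtain
\[
\prod_{j=1}^d (x_j-1) \;\geq\; \prod_{j=1}^d x_j \;-\; e_{d-1}(x_1,\ldots,x_d),
\]
where $e_{d-1}$ denotes the $(d-1)$st elementary symmetric polynomial in the $x_j$.

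Second, I would bound $e_{d-1}(x) \leq (\sum_j x_j)^{d-1}$ directly from the multinomial theorem. In the expansion of $(\sum_j x_j)^{d-1}$, the $\{0,1\}$-valued multi-indices having a single zero at coordinate $k$ contribute $(d-1)!\prod_{j\neq k} x_j$, and summing over $k$ already yields $(d-1)!\,e_{d-1}(x)$. Since the $x_j$ are positive, all remaining multinomial terms are nonnegative, so $(\sum_j x_j)^{d-1}\geq (d-1)!\,e_{d-1}(x) \geq e_{d-1}(x)$ for $d\geq 2$. The base cases $d=1$ (trivial equality) and $d=2$ (direct expansion: $(x_1-1)(x_2-1) = x_1x_2-(x_1+x_2)+1 \geq x_1x_2-(x_1+x_2)$) are handled separately.

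Chaining the two displayed bounds yields the lemma. The main delicate point is the Weierstrass step, which applies cleanly in the regime $1/x_j \in [0,1]$, i.e., $x_j \geq 1$. This is precisely the regime in which the lemma is invoked in the next paragraph, where $x_j = n_j/q$ and $n'_j = \lfloor n_j/q\rfloor\geq 1$ forces $x_j \geq 1$; the hypothesis ``positive real numbers'' in the lemma statement is therefore best read as the implicit floor-induced condition that all $x_j$ are at least $1$, under which the above argument goes through without modification.
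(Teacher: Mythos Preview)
Your argument is correct, and you have caught a genuine issue with the lemma's hypothesis: as stated for arbitrary positive reals the inequality is false. For instance, take $d=3$ and $x_1=x_2=x_3=\tfrac{1}{10}$; then the left side is $(-0.9)^3=-0.729$ while the right side is $10^{-3}-(0.3)^2=-0.089$, so the claimed inequality fails. The intended hypothesis is $x_j\ge 1$, which is exactly what the application requires: there $x_j=n_j/q$, and the subsequent block argument tacitly uses $n'_j=\lfloor n_j/q\rfloor\ge 1$. Under $x_j\ge 1$ your Weierstrass step is legitimate and the rest of the chain goes through as written.

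The paper's own proof is the one-line ``By induction on $d$,'' so your Weierstrass-plus-multinomial route is a different packaging. The inductive argument multiplies the rank-$(d{-}1)$ inequality by the nonnegative factor $x_d-1$ and then verifies $(S+x_d)^{d-1}\ge P+(x_d-1)S^{d-2}$ via the binomial expansion and $P\le S^{d-1}$; this step likewise needs $x_d\ge 1$. Your decomposition has the virtue of separating the two ideas cleanly: the Weierstrass step already yields the sharper intermediate bound $\prod_j(x_j-1)\ge \prod_j x_j - e_{d-1}(x)$, and the multinomial comparison $e_{d-1}(x)\le(\sum_j x_j)^{d-1}$ is where all the slack enters. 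The bare induction hides this structure. Either way, the restriction $x_j\ge 1$ is essential, and you were right to flag it.
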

\begin{proof}
By induction on $d$.
\end{proof}
It follows, by the preceding lemma, the estimate of $q$, and 
  the balance condition, that
\begin{eqnarray*}
\lefteqn{\frac{N}{q} 
  \ \ = \ \ \prod_{j=1}^{d}\left(\frac{n_{j}}{q}\right)^{1/d}
  \ \ \geq \ \ N' 
  \ \ \geq \ \ 
  \left(\prod_{j=1}^{d}\left(\frac{n_j}{q}-1\right)\right)^{1/d} } \\
  & & \ \geq \ \ 
  \left(\prod_{j=1}^{d} \frac{n_j}{q} - 
  \left(\sum_{j=1}^{d}\frac{n_j}{q}\right)^{d-1}\right)^{1/d}
  \ \ = \ \ 
  \frac{N}{q}\left(1-q\frac{S^{d-1}}{N^{d}}\right)^{1/d}\,.
\end{eqnarray*}
By~\eqref{eqn:sizes2}, our estimate of $q$, and since
  $\eta(d-1)<\lambda$, 
\[
q\frac{S^{d-1}}{N^{d}} 
  \ \leq \ \frac{2}{A}C_{g}^{d-1}t^{\eta(d-1)-\lambda}\,.
\]
Given the way we have chosen $A$, we have that 
  $(1-qS^{d-1}/N^{d})^{1/d}\geq 1-\delta$ and thus
\begin{equation}\label{eqn:estimate-Nprime}\tag{Estimate of $N'$}
\frac{N}{q} \ \geq \ N' \ \geq \ \frac{N}{q}(1-\delta)\,.
\end{equation}
Based on the preceding estimate of $N'$ and the estimate for 
  $q$ we will now show that $n'_1,\ldots,n'_d$ satisfy the 
  size conditions required by the definition of 
  $(c,\lambda,\theta)$-median.
Indeed, by our estimate of $N'$ and $q$, 
  and~\eqref{eqn:choiceA-lower} 
\begin{equation*}
N' \ \geq \
  \frac{N}{q}(1-\delta)
  \ \geq \ 
  \frac{1}{2}At^{\lambda}(1-\delta)
  \ \geq \ 
  at^{\lambda}\,.
\end{equation*}
Moreover, by definition of $S'$, our estimate of $q$,
  \eqref{eqn:sizes2}, and~\eqref{eqn:sizes1}, 
\begin{equation*}
S'b \ \leq \
  \frac{Sb}{q} 
  \ \leq \ 
  \frac{SbAt^{\lambda}}{N}
  \ \leq \
  C_{g}bAt^{\lambda+\eta}
  \ \leq \
  t^{\theta}\,.
\end{equation*} 
Now, let $H'$ be chosen according to 
  $\calD(K_{n'_1,\ldots,n'_d})$ and let $\mediannopar'$ be
  a median of $L(\calD(K_{n'_1,\ldots,n'_d}))$.
By definition
  of $(c,\lambda,\theta)$-median, we get that
  $cN't^{-\lambda}(1-\delta)\leq\mediannopar'\leq cN't^{-\lambda}(1+\delta)$.
Moreover, by definition of constant of concentration and approximate median,
  applying Markov's inequality yields,
\begin{eqnarray*}
\expec{}{L(H')} 
   & \geq &
   (1-2\delta)\frac{cN'}{t^{\lambda}}
   \prob{}{L(H')\geq (1-2\delta)\frac{cN'}{t^{\lambda}}} \\
   & \geq &
   (1-2\delta)\frac{cN'}{t^{\lambda}}
   \prob{}{L(H')\geq \left(1-\frac{\delta}{1-\delta}\right)\mediannopar'} \\
   & \geq &
   (1-2\delta)\frac{cN'}{t^{\lambda}}
   \left(1-2\exp\left(-h\frac{\delta^{2}}{(1-\delta)^{2}}\mediannopar'\right)\right) \\
   & \geq &
   (1-2\delta)\frac{cN'}{t^{\lambda}}
   \left(1-2\exp\left(-h\frac{\delta^{2}}{1-\delta}\frac{cN'}{t^{\lambda}}\right)\right)\,.  
\end{eqnarray*}
As observed above, $N'\geq At^{\lambda}(1-\delta)/2$, so by choice of 
  $A$, we get that $\expec{}{L(H')}\geq (1-2\delta)(1-\delta)cN't^{-\lambda}$.
Hence, given that $L(H)\geq \sum_{i=1}^{q}L(H_{i})$, 
  the estimate of $N'$, the definition of $\delta$, and elementary
  algebra,
\[
\expec{}{L(H)} 
  \ \geq \
  \sum_{i=1}^{q}\expec{}{L(H_{i})}
  \ \geq \
  (1-2\delta)(1-\delta)q\frac{cN'}{t^{\lambda}}
  \ \geq \
  (1-2\delta)(1-\delta)^{2}\frac{cN}{t^{\lambda}}
  \ \geq \
  (1-\epsilon/2)\frac{cN}{t^{\lambda}}\,.
\]
We have thus established the lower bound claimed 
  in~\eqref{eqn:maintheo-expec}.

Now, we proceed to show~\eqref{eqn:maintheo-lower}.
Note that
\begin{eqnarray}\label{eqn:maintheo-lower-aux} 
\prob{}{L(H)\leq (1-\epsilon)\frac{cN}{t^{\lambda}}}
  & \leq & \sum_{\substack{(s_1,\ldots,s_q)\in\NN^{q} \\ s_1+\ldots+s_q\leq (1-\epsilon)cNt^{-\lambda}}}
  \prob{}{L(H_i) = s_i, i=1,\ldots,q}\,.
\end{eqnarray}
Let $\calT$ be the set of indices of the summation in the preceding
  displayed equation.
Also, for $T=(s_1,\ldots,s_q)$ belonging to $\calT$  let $P_T$ denote 
  $\prob{}{L(H_i)=s_i,i=1,\ldots,q}$.
We will show that $P_{T}$ is exponentially small with respect to 
  $cNt^{-\lambda}$.
Recalling that the $L(H_{i})$'s are independent and  
  distributed as $L(H')$ when $H'$ is chosen 
  according to $\calD(K_{n'_1,\ldots,n'_d})$,
\[
P_{T} \ = \ \prod_{i=1}^{q} \prob{}{L(H_i)=s_i}
  \ \leq \ \left(\prob{}{L(H')\leq s_i}\right)^{q}\,.
\]
Again, by the way in which $H'$ is chosen, the definition of 
  $\mediannopar'$, and the definition of $(c,\lambda,\theta)$-median, 
  for all $i$ such that $s_i\leq (1-\delta)cN't^{-\lambda}\leq
    \mediannopar' \leq (1+\delta)cN't^{-\lambda}\leq 2cN't^{-\lambda}$,
  it holds that
\begin{eqnarray*}
\lefteqn{\prob{}{L(H')\leq s_i} \ \ = \ \ 
  \prob{}{L(H') \leq \left(1-\frac{\mediannopar'-s_i}{\mediannopar'}
     \right)\mediannopar'}} \\
  & \leq & 2\exp\left(-h\frac{(\mediannopar'-s_i)^{2}}{\mediannopar'}\right)
  \ \ \leq \ \ 2\exp\left(
    -\frac{ht^{\lambda}}{2cN'}((1-\delta)cN't^{-\lambda}-s_i)^2\right)\,.
\end{eqnarray*}
Hence, for all $1\leq i \leq q$,
\begin{eqnarray*}
\prob{}{L(H')\leq s_i} 
  & \leq & 2\exp\left(
    -\frac{ht^{\lambda}}{2cN'}
      \max\setof{0,((1-\delta)cN't^{-\lambda}-s_i)^2}\right)\,,
\end{eqnarray*}
and then
\[
-\ln P_{T} \ \geq \ 
  -\sum_{i=1}^{q}\ln\prob{}{L(H')\leq s_i}
  \ \geq \ -q\ln(2) + \frac{ht^{\lambda}}{2cN'}
    \sum_{i=1}^{q}\left(\max\setof{0,(1-\delta)cN't^{-\lambda}-s_i}\right)^{2}\,.
\]
By Cauchy-Schwartz's inequality, our estimate of $N'$,
  the fact that $s_1+\ldots+s_q\leq (1-\epsilon)cNt^{-\lambda}$, 
  and since by definition of $\delta$ we know that
  $(1-\delta)^{2}\geq 1-\epsilon/2$,
\begin{eqnarray*}
\lefteqn{\sqrt{q\sum_{i=1}^{q}\left(\max\setof{0,(1-\delta)cN't^{-\lambda}-s_i}\right)^2}
  \ \ \geq \ \
  \sum_{i=1}^{q}\max\setof{0,(1-\delta)cN't^{-\lambda}-s_i}} \\
  & & \quad \geq \ \ 
  (1-\delta)cN'qt^{-\lambda}-\sum_{i=1}^{q}s_i 
  \ \ \geq \ \
  (1-\delta)^{2}cNt^{-\lambda}-(1-\epsilon)cNt^{-\lambda} 
  \ \ \geq \ \
  \frac{cN\epsilon}{2t^{\lambda}}\,.
\end{eqnarray*}
Combining the last two displayed inequalities
  and recalling our estimate of $N'$,
  we get 
\[
-\ln P_{T} 
  \ \geq \
  -q\ln(2)+\frac{ht^{\lambda}}{2cN'q}\cdot\frac{c^2N^2\epsilon^2}{4t^{2\lambda}}
  \ \geq \
  -q\ln(2)+\frac{hcN\epsilon^2}{8t^{\lambda}}\,.
\]
By~\eqref{eqn:maintheo-lower-aux} and
  using the standard estimate ${a\choose b}\leq (ea/b)^{b}$,
  we have
\begin{eqnarray*}
\lefteqn{\prob{}{L(H)\leq (1-\epsilon)\frac{cN}{t^{\lambda}}}
  \ \ \leq \ \ 
  \sum_{T\in\calT} P_{T}
  \ \ \leq \ \
  \left|\calT\right|\cdot\max_{T\in\calT} P_{T} } \\
  & \leq &
  {\lfloor (1-\epsilon)cNt^{-\lambda}\rfloor + q\choose q}\cdot
     \max_{T\in\calT} P_{T} 
  \ \ \leq \ \
  \exp\left(q\ln\left(2e[1+(1-\epsilon)cNt^{-\lambda}/q]\right)-\frac{hcN\epsilon^{2}}{8t^{\lambda}}\right)\,.
\end{eqnarray*}
Now, by $q$'s estimate we know that $N\leq qAt^{\lambda}\leq 2N$.
Thus, if we require that $A$ is large enough so that 
  $\ln(2e[1+(1-\epsilon)cA])
  \leq Ahc\epsilon^{2}/32$, we get that
\[
\prob{}{L(H)\leq (1-\epsilon)\frac{cN}{t^{\lambda}}}
  \ \leq \
  \exp\left(\frac{2N}{At^{\lambda}}
  \ln(2e[1+(1-\epsilon)cA])-\frac{hcN\epsilon^2}{8t^{\lambda}}\right)
  \ \leq \ \exp\left(-\frac{h\epsilon^2cN}{16t^{\lambda}}\right)\,.
\]
This proves the lower bound claimed in~\eqref{eqn:maintheo-lower}.

What remains is to show the lower bound in~\eqref{eqn:maintheo-median}.
By $q$'s estimate we have $N\geq At^{\lambda}$ which
  together with our choice of $A$ (see~\eqref{eqn:choiceA-lower}), 
  imply that
\[
\exp\left(-\frac{h\epsilon^2cN}{16t^{\lambda}}\right)
  \ \leq \ 
  \exp\left(-\frac{h\epsilon^2c}{16}A\right)
  \ \leq \
  \frac{1}{2}\,.
\]
Combining the last two displayed equations, it follows that 
  $\prob{}{L(H)\leq (1-\epsilon)cNt^{-\lambda}}\leq 1/2$, 
  implying that any median of $L(H)$ must be at least 
  $(1-\epsilon)cNt^{-\lambda}$.

\begin{remark}
The reader may check that all claims proved in this section still hold
  if instead of $\calD=(\calD(K_{n_1,\ldots,n_d}))$ we had worked 
  with a weak random hyper-graph model $\calD=(\calD(K^{(d)}_{n}))$.
Indeed, if this would have been the case, then 
  for $H$ chosen according to $\calD(K^{(d)}_{n})$, 
  the hyper-graphs $H_{1},\ldots,H_{q}$ obtained above from $H$ 
  would have all their color classes of equal size, and the 
  weak random hyper-graph model assumption is all that is
  all that is need to carry forth the arguments laid out in this
  section.
\end{remark}

\section{Upper bounds}\label{sec:upper}
In this section we will establish the upper bounds claimed in 
  the statement of the Main Theorem, i.e.~the upper bounds
  in~\eqref{eqn:maintheo-expec} and~\eqref{eqn:maintheo-median}, 
  and inequality~\eqref{eqn:maintheo-upper}.
The proof of the latter of these bounds, the 
  upper tail bound, is rather long.
For sake of clarity of exposition, we have divided 
  its proof in three parts. 
First, in Section~\ref{sec:upper-notation}, 
  we introduce some useful variables.
In Section~\ref{sec:upper-tail-small},
  we establish~\eqref{eqn:maintheo-upper} for 
  not to large values of the geometric mean $N$.
Then, in Section~\ref{sec:upper-tail-large},
  we consider the case where $N$ is large.
Finally, in Section~\ref{sec:upper-mean-median}, 
  we conclude the proof of the bounds claimed
  in the Main Theorem.

\subsection{Basic variable definitions}\label{sec:upper-notation}
For the rest of this section, 
  let $\calD$, $c$, $\lambda$, $\theta$, $\eta$, and $\epsilon$ be 
  as in the statement of the Main Theorem.
Define 
\begin{equation}\label{eqn:def-delta-upper}\tag{Definition of $\delta$}
\delta \ = \ \min\setof{1,\frac{\epsilon^{2}}{1+\epsilon},
  \frac{\epsilon}{6}}\,.
\end{equation} 
Let $a=a(\delta)$, $b=b(\delta)$ and $t'=t'(\delta)$ 
  as guaranteed by the definition of $(c,\lambda,\theta)$-median.
Choose $A$ so
\begin{eqnarray}\label{eqn:choiceA-upper}
A & = & \max\setof{
  \frac{a}{\delta}, 
  \frac{8\ln(2)}{h\delta c}}\,.
\end{eqnarray}
For technical reasons, it will be convenient to fix constants $\alpha$ 
  and $\beta$ such that
\begin{equation}
\lambda \ < \ \alpha \ < \ \beta \ < \ \theta-\eta\,.
\end{equation}
We shall also encounter two constants $K_1$ and $K_2$, depending solely
  on $d$.
Since $g=O(t^{\eta})$, there are constants $C_g>1$ 
  and $t_g\geq t'$ such that for all $t\geq t_g$ it holds that 
  $g(t)\leq C_gt^{\eta}$, and 
\begin{eqnarray}
\max\setof{9At^{\lambda},e} \ \ \leq \ \ 
  t^{\alpha} & \leq & t^{\beta} 
  \ \ \leq \ \ \frac{1}{bdC_{g}}t^{\theta-\eta}\,, 
  \label{eqn:constant} \\ 
\frac{2t^{\lambda}}{c}
  \ \ \leq \ \ 
  \frac{2\alpha K_1}{\delta c K_{2} h}t^{\lambda} 
  & \leq & \frac{t^{\alpha}}{\ln(t)}\,.
  \label{eqn:constantK} 
\end{eqnarray}
Consider now $t\geq t_{g}$ and the positive integers 
  $n_1,n_2,\ldots,n_d$ with geometric mean $N$, summing up to $S$,
  and satisfying both the size constraint condition ($N\geq At^{\lambda}$)
  and balance condition ($S\leq g(t)N$).
Furthermore, define $M=cNt^{-\lambda}$ and choose $H$ according to 
  $\calD(K_{n_1,\ldots,n_d})$.
In the following two sections, we separately consider the case 
  where $N$ is less than and at least $t^{\beta}$.

\subsection{Upper tail bound for not to large values of $N$}\label{sec:upper-tail-small}
Throughout this section, we assume $N<t^{\beta}$.
We will show that $H$ satisfies the size lower bound restriction 
  in the definition of $(c,\lambda,\theta)$-median.
The fact that $\calD$ admits a concentration constant $h$
  will allow us obtain a bound on the upper tail of $L(H)$.

Let $t\geq t_{g}$.
Since $N$ satisfies both the size constraint and balance condition,
  by~\eqref{eqn:choiceA-upper}, \eqref{eqn:constant}, and 
  the definition of $\delta$,
\begin{eqnarray*}
N \ \ \geq \ \ At^{\lambda} & \geq & \frac{at^{\lambda}}{\delta}
  \ \ \geq \ \ at^{\lambda}\,, \\
Sb \ \ \leq \ \ g(t)bN & \leq & C_{g}bt^{\eta+\beta} \ \leq \ 
  \frac{t^{\theta}}{d} \ \ \leq \ \ t^{\theta}\,.
\end{eqnarray*}
Thus, $n_1,\ldots,n_d$ satisfy both the size lower and upper bound
  conditions of the definition of $(c,\lambda,\theta)$-median.
Hence, if $H$ is chosen according to $\calD(K_{n_1,\ldots,n_d})$,
  then every median $\mediannopar$
  of $L(H)$ is $\delta M$ close to~$M$.
Simple algebra, the definitions of concentration constant 
  and $(c,\lambda,\theta)$-median,
  and given that by definition of $\delta$
  we know that $\delta<\epsilon$, we have
\begin{eqnarray*}
\lefteqn{\prob{}{L(H) \geq (1+\epsilon)M} 
  \ \ = \ \ \prob{}{L(H) \geq 
  \left(1+\frac{(1+\epsilon)M-\mediannopar}{\mediannopar}\right)\mediannopar}}\\
  & \leq & 2\exp\left(
    -h\frac{((1+\epsilon)M-\mediannopar)^{2}}{(1+\epsilon)M}
  \right) 
  \ \ \leq \ \
  \exp\left(\ln(2)-h\frac{(\epsilon - \delta)^{2}}{1+\epsilon}M
  \right)\,.
\end{eqnarray*}
By~\eqref{eqn:choiceA-upper}, 
  since $N\geq At^{\lambda}$,
  the fact that by definition of $\delta$ we know
  that $\delta \leq \epsilon^{2}/(1+\epsilon)$, 
  and recalling that $M=cNt^{-\lambda}$, 
\begin{eqnarray*}
\lefteqn{\prob{}{L(H) \geq (1+\epsilon)M} 
  \ \ \leq \ \
  \exp\left(
    \frac{Ah\delta c}{8} - \frac{h\epsilon^{2}}{4(1+\epsilon)}M
  \right)} \\
  & \leq & 
  \exp\left(
    \frac{h\delta Nc}{8t^{\lambda}} - \frac{h\epsilon^{2}}{4(1+\epsilon)}M
  \right) 
  \ \ \leq \ \
  \exp\left(-\frac{h\epsilon^{2}}{8(1+\epsilon)}M\right)\,. 
\end{eqnarray*}
We have thus established~\eqref{eqn:maintheo-upper} for $N<t^{\beta}$.

\subsection{Upper tail bound for large values of $N$}\label{sec:upper-tail-large}
We now consider the case where $N\geq t^{\beta}$.
The magnitude of $N$ is such that we can not directly apply the 
  definition of $(c,\lambda,\theta)$-median to a hyper-graph
  generated according to $\calD(K_{n_1,\ldots,n_d})$, and thus derive
  the sought after exponentially small tail bound.
We again resort to the block partitioning technique 
  introduced in the proof of the lower bound. 
However, both the block partitioning and the analysis are more 
  delicate and involved in the case of the upper bound.

\subsubsection{Block partition}
Let $l=t^{\alpha}$, $L=C_{g}t^{\eta+\alpha}$ and 
\begin{eqnarray}\label{eqn:def-m-max}
m_{max} & = & \lceil (1+\epsilon)M\rceil\,.
\end{eqnarray}  
In what follows, we shall upper bound the probability that 
  $H$ chosen according to $\calD(K_{n_1,\ldots,n_d})$
  has a non-crossing 
  hyper-matching of size at least $m_{max}$, i.e.~the probability
  that $L(H)\geq m_{max}$.

We begin with a simple observation; since 
  distinct edges of a non-crossing hyper-matching of $H$ can not have 
  vertices in common,  $L(H)\leq n_i$ for all $i$.
It immediately follows that $L(H)$ is upper bounded by the geometric 
  mean of the $n_i$'s, i.e.~$L(H)\leq N$.
Thus, if $m_{max}>N$, then $\prob{}{L(H)\geq m_{max}}=0$.
This justifies why, in the ensuing discussion, we assume that $m_{max}\leq N$.

Let $J$ be a non-crossing hyper-subgraph of $K_{n_1,\ldots,n_d}$ such that
  the number of edges of $J$ is (exactly equal) $m_{max}$.
We shall partition the edge set of $J$ into consecutive sets of edges to
  which we will refer as \emph{blocks}.
The partition will be such that for any color class, the set of vertices 
  appearing in a block are ``not to far apart'', the precise meaning
  being clarified shortly.
The maximum number of edges in any block will be $s_{max}$, where:
\begin{eqnarray}\label{eqn:def-s-max}
s_{max} = \left\lfloor \frac{l}{N} m_{max}\right\rfloor\,.
\end{eqnarray}
Given two edges $e$ and $\widetilde{e}$ of $J$, such that 
  $e\preceq \widetilde{e}$, we denote by $[e,\widetilde{e}]$
  the collection of edges $f$ of $J$ such that 
  $e\preceq f\preceq \widetilde{e}$.
We now define a partition into blocks of the edge set of $J$,
  denoted $\calP(J)$, as follows:
  $\calP(J)=\set{[e_i,\widetilde{e}_{i}]}{1\leq i\leq q}$
  where the $e_i$'s, the $\widetilde{e}_{i}$'s, and $q$
  are determined through the following process:
\begin{itemize}
\item $e_1$
  is the first (smallest according to $\preceq$) edge of $J$.
\item Assuming $e_i=(v_1^{(i)},v_{2}^{(i)},\ldots,v_{d}^{(i)})$ 
  has already been defined, 
  $\widetilde{e}_{i}=(\widetilde{v}_1^{(i)},\widetilde{v}_{2}^{(i)},\ldots,
  \widetilde{v}_{d}^{(i)})$ 
  is the last edge of $J$ satisfying the following two conditions
  (see Figure~\ref{fig:partition} for an illustration):
\begin{figure}\label{fig:partition}
\begin{center}
\ifpdf\input{partition.pdf_t}\else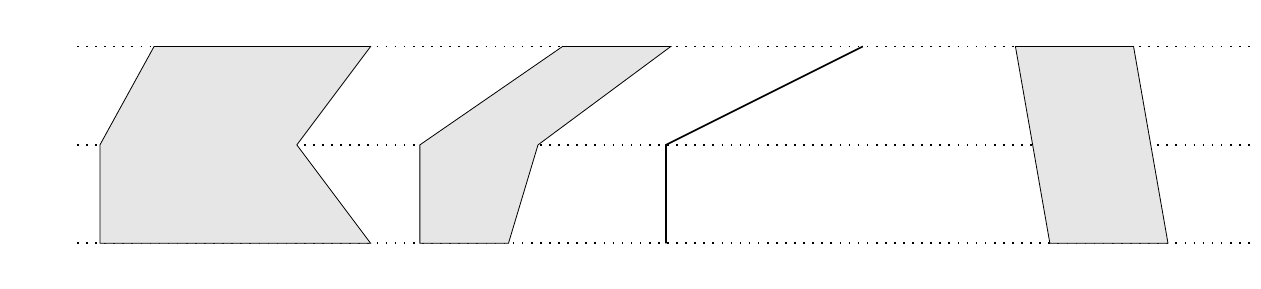\fi
\end{center}
\caption{Partition into blocks of a hyper-graph. Each block
  $[e_i,\widetilde{e}_{i}]$ (shown in light grey)
  contains at most $s_{max}$ edges and 
  at most $L$ vertices from each color class.}
\end{figure}
\begin{itemize}
\item $[e_{i},\widetilde{e}_{i}]$ has at most $s_{max}$ elements.
\item $\widetilde{v}_{j}^{(i)}-v_{j}^{(i)}\leq L$ for all
  $1\leq j\leq d$ (where we have relied on the abuse of notation
  entailed by our identification of the $j$-th color class 
  of $K_{n_1,\ldots,n_d}$ with the set $\setof{1,2,\ldots,n_{j}}$
  endowed with the natural order).  
\end{itemize}
\item Assuming $\widetilde{e}_{i}$ has already been defined and 
  provided there are edges $e$ of $J$ strictly larger than 
  $\widetilde{e}_{i}$, we define $e_{i+1}$ to be the smallest such $e$.
\end{itemize}
Clearly, the value taken by $q$ above depends on $J$.
Nevertheless, we will show that the following estimate 
  of $q=\left|\calP(J)\right|$ holds 
  for all $J$ non-crossing hyper-subgraphs of $K_{n_1,\ldots,n_d}$:
\begin{equation}\label{eqn:estimate-q2}\tag{Estimate of $q$}
\frac{N}{l} \ \ \leq \ \ \left|\calP(J)\right| \ \ \leq \ \ \frac{3N}{l}\,.
\end{equation}
Note that each block has at most $s_{max}$ edges and recall that 
  $|E(J)|= m_{max}$. 
Thus, $q\geq m_{max}/s_{max}\geq N/l$.
Now, say a block is \emph{short} if it is either $[e_q,\widetilde{e}_{q}]$
  or a block with exactly $s_{max}$ edges.
Let $I_{0}$ be the collection of indices of short blocks.
It follows that $m_{max} \geq (|I_{0}|-1)s_{max}$.
However, since $m_{max}\geq M=cNt^{-\lambda}$, we know that 
\[
\frac{m_{max}}{s_{max}} 
  \ \ \leq \ \ \frac{N}{l}\cdot\frac{1}{1-N/(lm_{max})}
  \ \ \leq \ \ \frac{N}{l}\cdot\frac{1}{1-t^{\lambda-\alpha}/c}\,.
\]
We thus have, since~\eqref{eqn:constantK} implies 
  that $t^{\lambda-\alpha}< c/2$, that $|I_{0}|\leq 2N/l$.

Say a block is \emph{regular} if it is not short, and let 
  $I_1=[q]\setminus I_{0}$ be the set of indices of such blocks.
We shall call \emph{block cover} the collection of all nodes
  between the first edge of the block (inclusive) and the 
  first edge of the next block (exclusive).
By definition of block partition, if the $i$-th block is regular,
  then for some color class $j$, we must have 
  $v_{j}^{(i+1)}-v_{j}^{(i)}> L$.
Hence, $\sum_{j=1}^{d}(v_{j}^{(i+1)}-v_{j}^{(i)})> L$.
In other words,  a regular block gives rise to 
  a block cover of cardinality at least $L$.
Since every node belongs to at most one block cover, 
  $|I_{1}|\leq S/L$.
Recalling that $L=C_{g}t^{\eta}/l$ and that $S$ satisfies 
  the balance condition (hence, $S\leq C_{g}t^{\eta}N$ for $t\geq t_{g}$), we 
  conclude that $|I_{1}|\leq N/l$.

Putting together the conclusions reached in the last two 
  paragraphs, we see that $q=|I_{0}|+|I_{1}|\leq 3N/l$, which
  establishes the claimed estimate of $q$.

\subsubsection{Partition types}
Let $s_{i}$ be the number of edges of $J$ in the $i$-th block 
  $[e_i,\widetilde{e}_{i}]$ of the partition $\calP(J)$.
Let $q$ be the number of blocks of $\calP(J)$.
We refer to the $(3q)$-tuple $T=(e_1,\widetilde{e}_{1},s_{1},\ldots,
  e_q,\widetilde{e}_{q},s_{q})$ as the \emph{type} of partition
  $\calP(J)$, and denote it $T(\calP(J))$.
Furthermore, let $\calT$ be the collection of all possible types of 
  partitions of hyper-subgraphs of $K_{n_1,\ldots,n_d}$ with 
  exactly $m_{max}$ edges.
\begin{lemma}\label{lem:types-partition}
There is a constant $K_1$, depending only on $d$, such 
  that $\displaystyle\left|\calT\right| 
  \leq \exp\left(K_1\frac{N}{l}\ln(l)\right)$.
\end{lemma}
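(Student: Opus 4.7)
The plan is a direct encoding bound. Any type $T=(e_1,\widetilde e_1,s_1,\ldots,e_q,\widetilde e_q,s_q)$ in $\calT$ is determined by: the sequence of starting edges $e_1\prec\cdots\prec e_q$, the $d$ offsets $\widetilde v_j^{(i)}-v_j^{(i)}\in\{0,1,\ldots,L\}$ for each block $i$ and color class $j$, and the block sizes $s_i\in\{1,\ldots,s_{max}\}$. I would simply count the number of such tuples, dropping the tighter inter-block constraint $\widetilde e_i\prec e_{i+1}$ (this relaxation only over-counts, which is harmless for an upper bound), and sum over the admissible range $\lceil N/l\rceil\leq q\leq\lfloor 3N/l\rfloor$ guaranteed by the estimate of $q$.

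For a fixed $q$, within each color class $j$ the coordinates $v_j^{(1)}<\cdots<v_j^{(q)}$ form a strictly increasing sequence in $[n_j]$, contributing $\binom{n_j}{q}$ choices; multiplying across color classes and using $\binom{n_j}{q}\leq (en_j/q)^q$ yields an overall bound of $(eN/q)^{dq}$. The offsets contribute $(L+1)^{dq}$ choices and the block sizes at most $s_{max}^{q}$. Taking logarithms,
\[
\ln|\calT|\ \leq\ \ln(2N/l+1)+\max_{q}\bigl[\,dq\ln(eN/q)+dq\ln(L+1)+q\ln s_{max}\,\bigr].
\]

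It remains to check that each bracketed term is $O(q\ln l)$. The first is at most $dq\ln(el)$ since $q\geq N/l$. For the second, $L=C_g t^{\eta+\alpha}=C_g t^{\eta}\cdot l$, and combined with $l=t^{\alpha}$ this yields $\ln(L+1)=O(\ln l)$. For the third, $s_{max}\leq (l/N)m_{max}\leq 3cl/t^{\lambda}$, so $\ln s_{max}=O(\ln l)$. Using $q\leq 3N/l$ and absorbing the $\ln(2N/l+1)$ prefactor into $K_1$ yields $\ln|\calT|\leq K_1(N/l)\ln l$. The argument is essentially combinatorial with no conceptual obstacle; the main bookkeeping point is to check that every constant absorbed into $K_1$ depends only on $d$ (and on the fixed setup parameters $\alpha,\eta,c,C_g$), not on $n_1,\ldots,n_d$ or $t$, which is automatic from the fixed proportionality between $\ln l$ and $\ln t$.
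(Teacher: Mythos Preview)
Your counting argument is sound and yields the right shape $\ln|\calT|=O\!\bigl((N/l)\ln l\bigr)$, but it differs from the paper's encoding in a way that matters for the lemma \emph{as stated}. The paper does not encode the $\widetilde e_i$'s via offsets in $\{0,\ldots,L\}$; instead it encodes them exactly like the $e_i$'s, as $q$ increasing elements in each color class, giving another factor $\prod_j\binom{n_j}{q}\leq (eN/q)^{dq}$. Likewise, the paper encodes $(s_1,\ldots,s_q)$ not by $s_{max}^{q}$ but by the number of compositions of $m_{max}\leq N$ into $q$ positive parts, bounded by $\binom{N}{q}\leq (eN/q)^{q}$. Both choices depend only on $N$ and $q$, and since $N/q\leq l$ the paper obtains $K_1=12(1+d)$ depending on $d$ alone.

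Your route, by contrast, brings in $\ln(L{+}1)$, and since $L=C_g t^{\eta+\alpha}$ with $l=t^{\alpha}$, the implied constant in $\ln(L{+}1)=O(\ln l)$ involves $(\eta+\alpha)/\alpha$ and $C_g$. So your $K_1$ depends on the fixed setup parameters $\alpha,\eta,C_g$, not only on $d$; you flag this yourself, and for the downstream use in the Main Theorem it is indeed harmless (these parameters are fixed before $t_g$ is chosen), but it does not quite deliver the lemma's ``depending only on $d$'' clause. Two small cleanups: your bound $s_{max}\leq 3cl/t^{\lambda}$ tacitly assumes a bound on $\epsilon$; it is simpler and cleaner to use $s_{max}\leq (l/N)m_{max}\leq l$ directly (since $m_{max}\leq N$), which also removes the dependence on $c$. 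And the additive $\ln(2N/l+1)$ is absorbed via $\ln x\leq x$ together with $\ln l\geq 1$, which you should state rather than leave implicit.
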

\begin{proof}
Observe that each $e_i$ is completely determined by specifying its 
  vertices.
Hence, the number of  ways of choosing $e_1,\ldots,e_q$ is at most 
  the number of ways of choosing $q$ elements from each of the 
  node color classes, i.e.~at most $\prod_{i=1}^{d} {n_i \choose q}$.
The number of choices for $\widetilde{e}_1,\ldots,\widetilde{e}_q$ 
  is bounded by the same amount.
On the other hand, since $J$ has exactly $m_{max}$ edges,
  the number of choices for $s_1,\ldots,s_q$ is at most
  the number of ways of summing up to $m_{max}$ with $q$ positive 
  integer summands. 
Since we are assuming that $m_{max}\leq N$ (see comment in this section's
  second paragraph), we have that the aforementioned quantity
  can be bounded by ${N\choose q}$.
Using that ${a\choose b}\leq (ea/b)^{b}$ we obtain, for fixed $q$,
  that the number of types is bounded by
\[
{N\choose q}\left(\prod_{i=1}^{d}{n_i\choose q}\right)^{2}
 \ \ \leq \ 
 \ \left(\frac{eN}{q}\right)^{q}\left(\prod_{i=1}^{d} (en_i/q)\right)^{2q}
 \ \ = \ \ 
  \left(\frac{eN}{q}\right)^{q+2qd}\,.
\]
Recalling our estimate for $q$, we get that
\[
|\calT| 
  \ \ \leq \ \ \sum^{\lfloor 3N/l\rfloor}_{q=\lceil N/l\rceil} 
    \left(\frac{eN}{q}\right)^{q(1+2d)}
  \ \ \leq \ \ \frac{3N}{l}(el)^{3(1+2d)N/l}\,.
\]
Since $\ln(x)\leq x$ for all $x>0$ and by~\eqref{eqn:constant} we know 
  that $l=t^{\alpha}\geq e$, 
\[
\ln |\calT| 
  \ \ \leq \ \ \ln\left(\frac{3N}{l}\right)+(1+2d)\frac{3N}{l}(1+\ln(l))
  \ \ \leq \ \ (2+2d)\frac{3N}{l}(1+\ln(l))
  \ \ \leq \ \ 12(1+d)\frac{N}{l}\ln(l)\,.
\]
The desired conclusion follows choosing $K_1=12(1+d)$.
\end{proof}

\subsubsection{Probability of a block partition occurring} 
The purpose of this section is to show that for a given fixed type $T$, with 
  exponentially small in $M$ probability 
  a hyper-graph chosen according to $\calD(K_{n_1,\ldots,n_d})$
  contains a hyper-subgraph of type $T$ with $m_{max}$ edges.
Specifically, we will prove the following result.
\begin{lemma}\label{lem:probability-partition}
For $T\in\calT$, let $P_{T}$ denote the probability 
  that a hyper-subgraph randomly chosen according to~$\calD(K_{n_1,\ldots,n_d})$
  contains a non-crossing hyper-subgraph  $J$ with $m_{max}$ edges
  such that $T(\calP(J))=T$.
Then, for some absolute constant $K_{2}>0$,
\begin{eqnarray*}
P_{T} & \leq & \exp\left(-K_{2}h\frac{\epsilon^{2}}{1+\epsilon}M\right)\,.
\end{eqnarray*}
\end{lemma}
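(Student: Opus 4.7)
My plan is to decompose $P_T$ into a product of block-wise probabilities using block independence, bound each factor via concentration around the approximate median, and combine using H\"older's and Cauchy--Schwarz inequalities. Fix $T=(e_1,\widetilde{e}_{1},s_{1},\ldots, e_q,\widetilde{e}_{q},s_{q})$, and for each $i$ set $n_j^{(i)}=\widetilde{v}_j^{(i)}-v_j^{(i)}+1$, $B_i=[v_1^{(i)},\widetilde{v}_1^{(i)}]\times\cdots\times[v_d^{(i)},\widetilde{v}_d^{(i)}]$, and $N^{(i)}=(\prod_{j}n_j^{(i)})^{1/d}$. Since any non-crossing node-disjoint $J$ of type $T$ forces $\widetilde{v}_j^{(i)}<v_j^{(i+1)}$ strictly in every color class, the $B_i$'s are pairwise disjoint in each color class. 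Block independence and monotonicity (Definition~\ref{def:random}) then imply $H|_{B_1},\ldots,H|_{B_q}$ are mutually independent with $H|_{B_i}\sim\calD(K_{n_1^{(i)},\ldots,n_d^{(i)}})$, and the event defining $P_T$ entails $L(H|_{B_i})\geq s_i$ for every $i$, so
\[
P_T\ \leq\ \prod_{i=1}^{q}\prob{}{L(H|_{B_i})\geq s_i}\,.
\]

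I would then split the blocks into large ones, with $N^{(i)}\geq at^\lambda$, and small ones, with $N^{(i)}<at^\lambda$. The size upper bound condition $b\sum_j n_j^{(i)}\leq bd(L+1)\leq t^\theta$ is automatic from $n_j^{(i)}\leq L+1$ and~\eqref{eqn:constant}, so for each large block the $(c,\lambda,\theta)$-median forces every median $\mediannopar_i$ of $L(H|_{B_i})$ to satisfy $\mediannopar_i\leq \tilde{m}_i\eqdef(1+\delta)cN^{(i)}/t^\lambda$, and the concentration constant $h$ gives
\[
\prob{}{L(H|_{B_i})\geq s_i}\ \leq\ 2\exp\paren{-h(s_i-\tilde{m}_i)_+^{2}/s_i}\,.
\]
For small blocks I use only the trivial bound $\prob{}{L(H|_{B_i})\geq s_i}\leq 1$.

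To aggregate, H\"older's inequality with exponents $(d,\ldots,d)$ applied to the disjointness $\sum_i n_j^{(i)}\leq n_j$ yields
\[
\sum_{i=1}^{q}N^{(i)}\ =\ \sum_{i}\prod_{j=1}^{d}(n_j^{(i)})^{1/d}\ \leq\ \prod_{j=1}^{d}\paren{\sum_{i}n_j^{(i)}}^{1/d}\ \leq\ N\,,
\]
so $\sum_{\text{large }i}\tilde{m}_i\leq (1+\delta)M$. Using $s_i\leq N^{(i)}<at^\lambda$ for small blocks together with the estimate $q\leq 3N/l$, the total small-block $s_i$-mass is at most $3aNt^{\lambda-\alpha}$, so the excess over large blocks satisfies
\[
\sum_{\text{large }i}(s_i-\tilde{m}_i)_+\ \geq\ m_{max}-(1+\delta)M-3aNt^{\lambda-\alpha}\ \geq\ \tfrac{\epsilon}{2}M
\]
by~\eqref{eqn:def-delta-upper} (which gives $\delta\leq\epsilon/6$) and a suitable choice of $\alpha$. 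Cauchy--Schwarz on the positive-excess indices then yields $\sum_{\text{large }i}(s_i-\tilde{m}_i)_+^{2}/s_i\geq (\epsilon M/2)^2/m_{max}\geq \epsilon^{2}M/(4(1+\epsilon))$, and multiplying by $h$ and absorbing the $q\ln 2\leq (3N/l)\ln 2$ overhead via~\eqref{eqn:constantK} gives $P_T\leq \exp(-K_2 h\epsilon^{2}M/(1+\epsilon))$ for an absolute constant $K_2>0$.

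The main obstacle I anticipate is controlling the small-block mass: making $3aNt^{\lambda-\alpha}$ negligible compared to $\epsilon M=\epsilon cN/t^\lambda$ requires $\alpha>2\lambda$, which by~\eqref{eqn:constant} is feasible only when $\theta-\eta>2\lambda$. Verifying that $\alpha$ can always be chosen in $(\lambda,\theta-\eta)$ satisfying this extra constraint---exploiting the Main Theorem's hypothesis $\eta\leq\min\setof{\lambda/(d-1),\theta-\lambda}$---and checking that all overhead error terms remain asymptotically absorbable, is the delicate technical step.
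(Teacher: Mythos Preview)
Your decomposition into block-wise probabilities and the use of H\"older and Cauchy--Schwarz match the paper's approach, but your handling of blocks that violate the size lower bound $N^{(i)}\geq at^\lambda$ has a genuine gap that you yourself flag but cannot close. You discard the small blocks and try to absorb the lost mass $\sum_{\text{small}} s_i \leq q\cdot at^\lambda \leq 3aNt^{\lambda-\alpha}$ into the $\epsilon M$ slack. As you correctly observe, this forces $\alpha>2\lambda$, hence $\theta-\eta>2\lambda$. But this is \emph{not} a consequence of the Main Theorem's hypothesis $\eta\leq\min\{\lambda/(d-1),\theta-\lambda\}$: for instance, in the binomial model with $d=2$ one has $\lambda=1/2$ and $\theta=3/4$ (Corollary~\ref{cor:random-binomial-median}), so $\theta-\eta\leq 3/4<1=2\lambda$ for every admissible $\eta\geq 0$. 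Your approach therefore cannot be completed in general; the obstacle you call ``delicate'' is in fact fatal.

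The paper avoids this by \emph{enlarging} every block rather than discarding the small ones: set $\overline{n}_j^{(i)}=\max\{\delta n_j At^\lambda/N,\,n_j^{(i)}\}$. Two things make this work. First, by monotonicity the block probabilities can only increase, so $P_T\leq\prod_i\prob{}{L(\calD(K_{\overline{n}_1^{(i)},\ldots,\overline{n}_d^{(i)}}))\geq s_i}$, and one checks directly that each augmented block satisfies both size conditions, so the concentration bound applies to \emph{every} factor. Second---and this is the key scaling---the augmentation in coordinate $j$ is proportional to $n_j$, so after summing over $i$ and applying H\"older one gets $\sum_i\overline{N}_i\leq N(1+q\delta At^\lambda/N)$. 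Since $q\leq 3N/t^\alpha$ and~\eqref{eqn:constant} gives $9At^\lambda\leq t^\alpha$, the overhead is at most $\delta N/3$, hence $\sum_i\tilde{m}_i\leq(1+\delta)^2M$. The error introduced is thus $O(\delta M)$ rather than $O(Nt^{\lambda-\alpha})$---a gain of a full factor $t^\lambda$, precisely what removes the need for $\alpha>2\lambda$. If you replace your large/small split by this augmentation trick, the rest of your argument goes through essentially as written.
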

We now proceed with the proof of the preceding result.
Let $T=(e_1,\widetilde{e}_{1},s_{1},\ldots,
  e_q,\widetilde{e}_{q},s_{q})$.
As before, for all $i$, let $e_{i}=(v_{1}^{(i)}, v_{2}^{(i)},
  \ldots,v_{q}^{(i)})$ and $e_{i}=(\widetilde{v}_{1}^{(i)}, 
  \widetilde{v}_{2}^{(i)}, \ldots,\widetilde{v}_{q}^{(i)})$.
Let $H$ be chosen according to $\calD(K_{n_1,\ldots,n_d})$,
  and let $H_{i}$ be the hyper-subgraph of $H$ induced by the 
  nodes between $e_{i}$ and $\widetilde{e}_{i}$, i.e.,
\[
v_{1}^{(i)}, v_{1}^{(i)}+1,\ldots, \widetilde{v}_{1}^{(i)},
v_{2}^{(i)}, v_{2}^{(i)}+1,\ldots, \widetilde{v}_{2}^{(i)}, 
 \ldots
v_{d}^{(i)}, v_{d}^{(i)}+1,\ldots, \widetilde{v}_{d}^{(i)}\,. 
\]
Note that $H_{i}$ is distributed according to 
  $\calD(K_{n_1^{(i)},n_{2}^{(i)},\ldots,n_{d}^{(i)}})$, 
  where $n_{j}^{(i)}=\widetilde{v}_{j}^{(i)}-v_{j}^{(i)}+1$ is the 
  size of the $j$-th color class of $H_{i}$.
Moreover, if there is a hyper-subgraph $J$ of $H$ such that 
  $T(J)=T$, then it must hold that $L(H_{i})\geq s_{i}$, 
  for all $i=1,\ldots,q$.
Since by hypothesis, $\calD$ satisfies the block independence
  property, the events $L(H_{i})\geq s_{i}$, $i=1,\ldots,q$, are 
  independent, so
\begin{eqnarray*}
P_{T} & \leq & \prod_{i=1}^{q}\prob{}{L\left(\calD(K_{n_1^{(i)},n_{2}^{(i)},
  \ldots,n_{d}^{(i)}})\right)\geq s_{i}}.
\end{eqnarray*}
Now, let $N_{i}$ and $S_{i}$ denote the geometric mean and sum of 
  $n_{1}^{(i)},\ldots,n_{d}^{(i)}$, respectively.
The $i$-th term in the product of the last displayed equation
  will be small provided the sizes of the color classes of $H_{i}$,
  i.e.~the $n_{j}^{(i)}$'s,
  satisfy the size constraints of the definition of a 
  $(c,\lambda,\theta)$-median.
Unfortunately, this may not occur for every $i$, somewhat
  complicating the analysis.
Below we see how to handle this situation.

Since $T(\calP(J))=T$, we know that 
  $n_{1}^{(i)},n_{2}^{(i)},\ldots,n_{d}^{(i)}\leq L$. 
Recalling that $\alpha<\beta$ and applying~\eqref{eqn:constantK}
  we conclude that $S_{i}b\leq dbL 
     = C_{g}dbt^{\eta+\alpha} \leq C_{g}dbt^{\eta+\beta} \leq t^{\theta}$,
  so the size upper bound condition of the definition of 
  a $(c,\lambda,\theta)$-median holds.
However, the same might not be true regarding the 
  size lower bound condition $N_{i}\geq at^{\lambda}$.
In order to handle this situation, we artificially augment the size
  of the blocks where the condition fails.
Specifically, for all $i=1,\ldots,q$ and $j=1,\ldots,d$ we define:
\begin{eqnarray*}
\overline{n}_{j}^{(i)} & = & 
  \max\setof{\delta n_{j}At^{\lambda}/N,n_{j}^{(i)}}\,.
\end{eqnarray*}
As usual, let $\overline{N}_{i}$ and $\overline{S}_{i}$ denote 
  the geometric mean and sum of the $\overline{n}_{j}^{(i)}$'s. 
Now observe that when we augment the sizes of the color classes 
  of the hyper-graphs chosen, by the monotonicity property of 
  random hyper-graph models, the probability of finding 
  a non-crossing hyper-subgraph of size at least $s_i$ increases. 
Hence,
\[
P_{T} 
  \ \ \leq \ \
  \prod_{i=1}^{q} \prob{}{L\left(\calD\left(K_{n_i^{(1)},\ldots,n_i^{(d)}}\right)\right)\geq s_{i}}
  \ \ \leq \ \
  \prod_{i=1}^{q} \prob{}{L\left(\calD\left(K_{\overline{n}_i^{(1)},\ldots,\overline{n}_i^{(d)}}\right)\right)\geq s_{i}}\,.
\]
We claim that the $\overline{N}_{i}$'s and $\overline{S}_{i}$'s satisfy
  the size conditions in the definition of a $(c,\lambda,\theta)$-median.
Indeed, by definition of 
  of $\overline{n}_{j}^{(i)}$, since $n_{j}^{(i)}\leq L$, and
\[
\delta n_{j}\frac{At^{\lambda}}{N} 
  \ \ \leq \ \ 
  \delta n_{j}\frac{t^{\alpha}}{N}
  \ \ \leq \ \
  \delta \frac{St^{\alpha}}{N}
  \ \ \leq \ \
  \delta g(t)t^{\alpha}
  \ \ \leq \ \
  \delta C_{g}t^{\alpha+\eta}
  \ \ = \ \
  \delta L
  \ \ \leq \ \ 
  L\,,
\]
it follows that $\overline{n}_{j}^{(i)}\leq L$, and thence,
  as before augmenting the block sizes, 
  $\overline{S}_{i}b\leq t^{\theta}$.
On the other hand, 
  by definition of $\overline{n}_{j}^{(i)}$, given that
  $N\geq At^{\lambda}$, and since
  by~\eqref{eqn:choiceA-upper} we know that $A\geq a/\delta$,
\[
\overline{N}_{i} 
  \ \ = \ \ 
  \left(\prod_{j=1}^{d}\overline{n}_{j}^{(i)}\right)^{1/d}
  \ \ \geq \ \ 
  \frac{\delta At^{\lambda}}{N}\left(\prod_{j=1}^{d} n_{j}\right)^{1/d}
  \ \ = \ \ 
  \delta At^{\lambda} 
  \ \ \geq \ \ 
  at^{\lambda}\,.
\]
This concludes the proof of the stated claim.

Now, let $\overline{\mediannopar}_{i}$ be a median of 
  $L(\calD(K_{\overline{n}_{i}^{(1)},\ldots,\overline{n}_{i}^{(d)}}))$.
By definition of $(c,\lambda,\theta)$-median,
\[
(1-\delta)c\overline{N}_{i}t^{-\lambda}
  \ \ \leq \ \ 
  \overline{\mediannopar}_{i}
  \ \ \leq \ \
  (1+\delta)c\overline{N}_{i}t^{-\lambda}\,.
\]
Hence, for all $i$ such that $s_{i}\geq (1+\delta)c\overline{N}_{i}t^{-\alpha}
  \geq \overline{\mediannopar}_{i}$, and using that $h$ is a 
  concentration constant for the random model $\calD$, we get
\begin{eqnarray*}
\prob{}{L\left(\calD\left(K_{\overline{n}_{i}^{(1)},\ldots,\overline{n}_{i}^{(d)}}\right)\right) \geq s_{i}} 
  & \leq &
  2\exp\left(-h\frac{(s_{i}-\overline{\mediannopar}_{i})^{2}}{s_{i}}\right) \\
  & \leq & 
  2\exp\left(-h\frac{(s_{i}-(1+\delta)c\overline{N}_{i}t^{-\lambda})^{2}}{s_{i}}\right)\,.
\end{eqnarray*}
Since $s_{i}\leq s_{max}$ for all $i$,
\begin{eqnarray*}
\prob{}{L\left(\calD\left(K_{\overline{n}_{i}^{(1)},\ldots,\overline{n}_{i}^{(d)}}\right)\right) \geq s_{i}} 
  & \leq & 
  2\exp\left(-h\frac{(\max\setof{0,s_{i}-(1+\delta)c\overline{N}_{i}t^{-\lambda}})^{2}}{s_{i}}\right) \\
  & \leq & 
  2\exp\left(-h\frac{(\max\setof{0,s_{i}-(1+\delta)c\overline{N}_{i}t^{-\lambda}})^{2}}{s_{max}}\right)\,.
\end{eqnarray*}
Combining some of the previously derived bounds
\begin{eqnarray*}
-\ln P_{T} 
  & \geq & 
  -\ln\left(\prod_{i=1}^{q}\prob{}{L\left(\calD\left(K_{\overline{n}_{i},\ldots,\overline{n}_{i}^{(d)}}\right)\right)\geq s_{i}}\right) \\
  & \geq & -q\ln(2)+\frac{h}{s_{max}}\sum_{i=1}^{q}
    \left(\max\setof{0,s_{i}-(1+\delta)c\overline{N}_{i}t^{-\lambda}}\right)^{2}\,.
\end{eqnarray*}
We now focus on the summation in the last term in the preceding displayed
  equation.
We lower bound it, via the following generalization of 
  H\"older's Inequality.
\begin{lemma}{[Generalization of H\"older's Inequality]}\label{lem:holder}
For any collection of positive real numbers $x_{i,j}$, $1\leq i\leq q$,
  $1\leq j\leq d$,
\begin{eqnarray*}
\left(\sum_{i=1}^{q}\prod_{j=1}^{d} x_{i,j}\right)^{d}
  & \leq & \prod_{j=1}^{d}\sum_{i=1}^{q} x_{i,j}^{d}\,.
\end{eqnarray*} 
\end{lemma}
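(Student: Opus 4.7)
The stated inequality is the generalized Hölder inequality in the equal-exponent case. The plan is to recognize that taking $p_{1}=\ldots=p_{d}=d$ in classical Hölder (so that $\sum_{j=1}^{d} 1/p_{j} = 1$) yields exactly
\[
\sum_{i=1}^{q} \prod_{j=1}^{d} x_{i,j} \ \leq\ \prod_{j=1}^{d}\Bigl(\sum_{i=1}^{q} x_{i,j}^{d}\Bigr)^{1/d},
\]
and then raising both sides to the $d$-th power gives the claim. So there is really nothing to do beyond invoking Hölder.

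If a self-contained argument is preferred, the plan is to run the standard AM-GM proof of Hölder in our setting. First I would normalize: set $S_{j}=\sum_{i} x_{i,j}^{d}$ (we may assume all $S_{j}>0$, otherwise both sides are zero) and define $a_{i,j} = x_{i,j}^{d}/S_{j}$, so that $\sum_{i} a_{i,j}=1$ for every $j$. Then $x_{i,j} = (a_{i,j} S_{j})^{1/d}$, and hence
\[
\prod_{j=1}^{d} x_{i,j} \ =\ \Bigl(\prod_{j=1}^{d} S_{j}\Bigr)^{1/d} \prod_{j=1}^{d} a_{i,j}^{1/d}.
\]
Next, apply the weighted AM-GM inequality with equal weights $1/d$ to obtain $\prod_{j} a_{i,j}^{1/d} \leq \frac{1}{d}\sum_{j} a_{i,j}$. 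Summing over $i$ and using $\sum_{i} a_{i,j}=1$ gives
\[
\sum_{i=1}^{q}\prod_{j=1}^{d} x_{i,j} \ \leq\ \Bigl(\prod_{j=1}^{d} S_{j}\Bigr)^{1/d}\cdot \frac{1}{d}\sum_{j=1}^{d} 1 \ =\ \Bigl(\prod_{j=1}^{d} S_{j}\Bigr)^{1/d},
\]
and raising both sides to the $d$-th power yields the inequality.

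An equally short alternative is induction on $d$, taking Cauchy--Schwarz as the $d=2$ base case; the inductive step follows by grouping the first $d-1$ factors and applying Hölder with exponents $d/(d-1)$ and $d$ to the sum $\sum_{i} x_{i,d}\cdot \prod_{j<d} x_{i,j}$, then using the induction hypothesis on the second factor. There is no real obstacle here: the only thing to be slightly careful about is the degenerate case where some column $j$ has all $x_{i,j}=0$, which is handled separately (both sides vanish). Since the inequality is a standard textbook result, I would in fact present it as an immediate consequence of Hölder and omit the detailed calculation.
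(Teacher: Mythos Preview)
Your proposal is correct and matches the paper's treatment: the paper states the lemma without proof, treating it as a standard consequence of H\"older's inequality, which is exactly what you do. Your additional self-contained AM--GM argument is a bonus the paper does not include; note only that since the hypothesis already requires the $x_{i,j}$ to be positive, the degenerate case you mention cannot arise.
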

Setting $x_{i,j}=(\overline{n}_{j}^{(i)})^{1/d}$ 
  in the aforementioned stated inequality, observing 
  that by definition of $\overline{n}_{j}^{(i)}$
  we have $\overline{n}_{j}^{(i)} \leq n_{j}^{(i)}+\delta n_{j}At^{\lambda}/N$,
  and recalling that the sum of $n_{1}^{(i)},\ldots, n_{d}^{(i)}$
  is at most $n_{j}$,
\[
\sum_{i=1}^{q}\overline{N}_{i} 
  \ \ \leq \ \ 
  \left(\prod_{j=1}^{d}\sum_{i=1}^{q}\overline{n}_{j}^{(i)}\right)^{1/d}
  \ \ \leq \ \
  \left(\prod_{j=1}^{d}\sum_{i=1}^{q}(n_{j}^{(i)}+\delta n_{j}At^{\lambda}/N)\right)^{1/d}
  \ \ \leq \ \
  N(1+\delta qAt^{\lambda}/N)\,.
\]
Because of our estimate for $q$ and~\eqref{eqn:constant}, we conclude that
\[
\sum_{i=1}^{q} \overline{N}_{i} 
  \ \ \leq \ \ N(1+3\delta At^{\lambda-\alpha})
  \ \ \leq \ \ N(1+\delta/3)
  \ \ \leq \ \ N(1+\delta)\,.
\]
By Cauchy-Schwartz's inequality and recalling that the sum of the $s_{i}$'s
  is exactly equal to $m_{max}=\lceil(1+\epsilon)M\rceil$, 
\begin{eqnarray*}
\lefteqn{\sqrt{q\sum_{i=1}^{q}
  \left(\max\setof{0,s_{i}-(1+\delta)c\overline{N}_{i}t^{-\lambda}}\right)^{2}}
  \ \ \geq  \ \
  \sum_{i=1}^{q} 
  \max\setof{0,s_{i}-(1+\delta)c\overline{N}_{i}t^{-\lambda}}} \\
  & & \quad \geq \ \
  m_{max}-(1+\delta)ct^{-\lambda}\sum_{i=1}^{q}\overline{N}_{i} 
  \ \ \geq \ \
  M(1+\epsilon)-M(1+\delta)^{2}\,.
\end{eqnarray*}
Lets now see that the just derived lower bound is actually positive.
Recall, that by definition of $\delta$ we know that $\delta\leq\epsilon/6$
  and $\delta\leq 1$, so
\[
(1+\epsilon)-(1+\delta)^{2} 
  \ \ = \ \ \epsilon -2\delta-\delta^{2}
  \ \ \geq \ \ \epsilon - 3\delta 
  \ \ \geq \ \ \epsilon/2\,.
\]
We then have,
\begin{eqnarray*}
\sqrt{q\sum_{i=1}^{q}
  \left(\max\setof{0,s_{i}-(1+\delta)c\overline{N}_{i}t^{-\lambda}}\right)^{2}}
  & \geq & 
  \frac{\epsilon M}{2}\,.
\end{eqnarray*}
Putting things together, and since $s_{max}\leq (l/N)(1+\epsilon)M$, we find that
\begin{eqnarray*}
\lefteqn{-\ln P_{T} 
  \ \ \geq \ \ -q\ln(2)+\frac{h}{s_{max}}\sum_{i=1}^{q}
  \left(\max\setof{0,s_{i}-(1+\delta)c\overline{N}_{i}t^{-\lambda}}\right)^{2}} \\
  & & \quad \geq 
  \ \ -q\ln(2) + \frac{h}{qs_{max}}\cdot \frac{\epsilon^{2}M^{2}}{4} 
  \ \ \geq \ \ -q\ln(2) + \frac{hN\epsilon^{2}M}{4q(1+\epsilon)l}\,.
\end{eqnarray*}
Finally, recall that by our estimate for $q$ we know that 
  $q\leq 3N/l$ and by~\eqref{eqn:constant} we have that 
  $l=t^{\alpha}\geq 9At^{\lambda}$, so
\[
-\ln P_{T} 
  \ \ \geq \ \ 
  -\frac{N\ln(2)}{3At^{\lambda}}+
    \frac{h\epsilon^{2}M}{12(1+\epsilon)} 
  \ \ = \ \ 
  \frac{cN}{t^{\lambda}}\left(\frac{h\epsilon^{2}}{12(1+\epsilon)}-\frac{\ln(2)}{3Ac}\right)\,.
\]
By~\eqref{eqn:choiceA-upper} we know that 
  $A\geq 8\ln(2)/(hc\delta)$, by definition of $\delta$ we have
  that $\delta\leq\epsilon^{2}/(1+\epsilon)$, implying that
\[
-\ln P_{T} 
  \ \ \geq \ \ 
  \frac{cN}{t^{\lambda}}\left(\frac{h\epsilon^{2}}{12(1+\epsilon)}-\frac{h\delta}{24}\right)
  \ \ \geq \ \ 
  \frac{\epsilon^{2}}{1+\epsilon}\cdot\frac{hM}{24}\,.
\]
We have thus shown that Lemma~\ref{lem:probability-partition}
  holds taking $K_{2}=1/24$.

\subsubsection{Upper tail bound}
We are now ready to finally prove~\eqref{eqn:maintheo-upper} for 
  $N\geq t^{\beta}$.
First, note that
\[
\prob{}{L\left(\calD\left(K_{n_1,\ldots,n_d}\right)\right)\geq m_{max}}
  \ \ \leq \ \ 
  \sum_{T\in\calT} P_{T} 
  \ \ \leq \ \ 
  \left|\calT\right|\cdot\max_{T\in\calT} P_{T}\,.
\]
By Lemmas~\ref{lem:types-partition} 
  and~\ref{lem:probability-partition}, 
  the fact that $l=t^{\alpha}$,
  by our choice of $t_{g}$ so~\eqref{eqn:constantK} 
  would hold, recalling that by definition of $\delta$ we have
  that $\delta\leq \epsilon^{2}/(1+\epsilon)$ and given that 
  $M=cNt^{-\lambda}$,
\begin{eqnarray*}
\lefteqn{\prob{}{L\left(\calD\left(K_{n_1,\ldots,n_d}\right)\right)\geq m_{max}}}
\\
  & & \quad \leq \ \ 
  \exp\left(K_{1}\frac{N}{l}\ln(l)-K_{2}h\frac{\epsilon^{2}}{1+\epsilon}M\right)
  \ \ = \ \  
  \exp\left(K_{1}\alpha\frac{N}{t^{\alpha}}\ln(t)-K_{2}h\frac{\epsilon^{2}}{1+\epsilon}M\right) \\
  & & \quad \leq \ \
  \exp\left(\frac{\delta K_{2}h}{2}\frac{cN}{t^{\lambda}}-K_{2}h\frac{\epsilon^{2}}{1+\epsilon}M\right)
  \ \ \leq \ \
  \exp\left(-\frac{K_{2}h\epsilon^{2}}{2(1+\epsilon)}M\right)\,.
\end{eqnarray*}
We thus conclude that~\eqref{eqn:maintheo-upper} holds for 
  any constant $K\leq K_{2}/2$ (since $K_{2}=1/24$, any $K\leq 1/48$ would do).

\subsection{Upper bounds for the mean and median}\label{sec:upper-mean-median}
We will now establish the two remaining unproved bounds
  claimed in the Main Theorem, i.e.~\eqref{eqn:maintheo-expec} 
  and~\eqref{eqn:maintheo-median}.

Fix $\epsilon=\epsilon_{0}>0$ and 
  choose $\delta$, $A$, $\alpha$, $\beta$, $C_g$, 
  $t_{g}$, $K_1$ and $K_2$ as in Section~\ref{sec:upper-notation}.
We can view $\delta$ as a function
  of $\epsilon$, henceforth denoted $\delta(\epsilon)$.
Similarly, we can view $A$ and $t_{g}$
  as functions of $\delta$, denoted $A(\delta)$ and $t_{g}(\delta)$
  respectively.
Let $A'$ be a sufficiently large constant so
\begin{equation}\label{eqn:definitionA-prime}\tag{Definition of $A'$}
\exp\left(-Kh\frac{\epsilon^{2}_{0}}{4(1+\epsilon_{0}/2)}cA'\right)
  \ \ \leq \ \ \frac{\epsilon_{0}}{28}\,, 
  \quad \mbox{and} \quad
  \frac{7}{6Kh} \ \ \leq \ \ \frac{\epsilon_{0}}{4}cA'\,.
\end{equation}
Also, let $\delta_{0}=\delta(\epsilon_{0}/2)$.
Observe that by definition of $\delta$, for every $\epsilon\geq 6$
  we have that $\delta(\epsilon)=1$.
Define now
  $\widetilde{A}=\max\setof{A(\delta_{0}),A(1),A'}$,
  and $\widetilde{t}_{g}=\max\setof{t_{g}(\delta_{0}),t_{g}(1)}$.

Let $t\geq\widetilde{t}_{g}$ and consider the positive 
  integers $n_1,\ldots,n_d$ with geometric mean $N$ and summing $S$ 
  satisfying the size and balance conditions in the 
  statement of the Main Theorem, i.e.
\begin{equation*}
N \ \ \geq \ \ \widetilde{A}t^{\lambda}\,, 
\qquad \mbox{ and } \qquad
Sb \ \ \leq \ \ g(t)N\,.
\end{equation*}
The choice of $\widetilde{t}_{g}$ and $\widetilde{A}$ guarantee 
  that~\eqref{eqn:maintheo-upper} holds for $\epsilon=\epsilon_{0}/2$
  and for all $\epsilon\geq 6$.

As usual, let $H$ be chosen according to $\calD(K_{n_1,\ldots,n_d})$ and 
  let $M=cNt^{-\lambda}$.
Let $\indic_{A}(x)$ denote the function that takes the value 
  $1$ if $x\in A$ and $0$ otherwise.
Observe that
\begin{eqnarray*}
\lefteqn{\expec{}{L(H)} \ \ = \ \ 
  \expec{}{L(H)\indic_{[0,(1+\epsilon_{0}/2)M)}(L(H))}   } \\
  & & \quad 
  + \ \ \expec{}{L(H)\indic_{[(1+\epsilon_{0}/2)M,7M)}(L(H))}
  + \expec{}{L(H)\indic_{[7M,+\infty]}(L(H))}\,.
\end{eqnarray*}
Lets now upper bound separately 
  each of the terms in the right hand side of the preceding
  displayed equation.
The first one is trivially upper bounded by $(1+\epsilon_{0}/2)M$.
Thanks to~\eqref{eqn:maintheo-upper}, since
  $N\geq \widetilde{A}t^{\lambda}\geq A't^{\lambda}$, and by definition
  of $A'$,
\begin{eqnarray*}
\lefteqn{\expec{}{L(H)\indic_{[(1+\epsilon_{0}/2)M,7M)}(L(H))}
  \ \ \leq \ \
  7M\prob{}{L(H)>(1+\epsilon_{0}/2)M}} \\
  & & \quad \leq \ \
  7M\exp\left(-Kh\frac{\epsilon_{0}^{2}}{4(1+\epsilon_{0}/2)}\cdot\frac{cN}{t^{\lambda}}\right)
  \ \ \leq \ \
  7M\exp\left(-Kh\frac{\epsilon_{0}^{2}}{4(1+\epsilon_{0}/2)}\cdot cA'\right) 
  \ \ \leq \ \
  \frac{M\epsilon_{0}}{4}\,.
\end{eqnarray*}
Now lets consider the third term.
By~\eqref{eqn:maintheo-upper}, since for 
  $\epsilon\geq 6$ it holds that
  $\epsilon/(1+\epsilon)\geq 6/7$,
  given that $M=cNt^{-\lambda}\geq c\widetilde{A}\geq cA'$,
  and by definition of $A'$,
\begin{eqnarray*}
\lefteqn{\expec{}{L(H)\indic_{[7M,+\infty]}(L(H))}
  \ \ = \ \
  \int_{7M}^{\infty} \prob{}{L(H)>t}dt 
  \ \ = \ \
  M\int_{6}^{\infty} \prob{}{L(H)>(1+\epsilon)M}d\epsilon} \\
  & & \quad \leq \ \
  M\int_{6}^{\infty} \exp\left(-Kh\frac{\epsilon^{2}}{1+\epsilon}\cdot M\right)d\epsilon 
  \ \ \leq \ \
  M\int_{6}^{\infty} \exp\left(-\frac{6Kh}{7}\cdot M\epsilon\right)d\epsilon \\
  & & \quad = \ \
  M\left(\frac{6Kh}{7}M\right)^{-1}
  \exp\left(-\frac{36Kh}{7}\cdot M\right) 
  \ \ \leq \ \ 
  M\left(\frac{6Kh}{7}\cdot cA'\right)^{-1} 
  \ \ \leq \ \
  \frac{M\epsilon_{0}}{4}\,.
\end{eqnarray*}  
Summarizing, we have that $\expec{}{L(H)}\leq (1+\epsilon_{0})M$ which
  proves~\eqref{eqn:maintheo-expec}.

Finally, we establish~\eqref{eqn:maintheo-median}.
Again, let $\epsilon>0$ and 
  choose $\delta$, $A$, $\alpha$, $\beta$, $C_g$, 
  $t_{g}$, $K_1$ and $K_2$ as in Section~\ref{sec:upper-notation}.
Let 
\begin{equation}\tag{Definition of $A'$}
A' \ \  = \ \ \max\setof{A,\frac{(1+\epsilon)\ln(2)}{Khc\epsilon^{2}}}\,.
\end{equation}
Now, let $t\geq t_{g}$ and $n_{1},\ldots,n_{d}$ be positive integers 
  with geometric mean $N$ and summing up to $S$ satisfying the 
  size and balance conditions with respect to the just defined
  constant $A'$, i.e.
\[
N \ \ \geq \ \ A't^{\lambda}\,, \quad \mbox{and} \quad
Sb \ \ \leq \ \ g(t)N\,.
\]
By~\eqref{eqn:maintheo-upper} and definition of $A'$, it follows 
  that
\[
\prob{}{L(H)\geq (1+\epsilon)M}
  \ \ \leq \ \ 
  \exp\left(-Kh\frac{\epsilon^{2}}{1+\epsilon}\cdot \frac{cN}{t^{\lambda}}\right)
  \ \ \leq \ \ 
  \exp\left(-Kh\frac{\epsilon^{2}}{1+\epsilon}\cdot cA'\right)
  \ \ \leq \ \ 
  \frac{1}{2}\,.
\]
Hence, every median of $L(H)$ is at most $(1+\epsilon)M$, thus 
  establishing~\eqref{eqn:maintheo-median} and completing the 
  proof of the Main Theorem.

\section{Applications}\label{sec:appl}

\subsection{Random binomial hyper-graph model}\label{sec:random-binomial}
In this section, we show how to apply the Main Theorem to the 
  $d$-partite random binomial hyper-graph model.

We will show that the constant $c$ of the definition of a 
  $(c,\lambda,\theta)$-median for this model is related to 
  a constant that arises in the study of the asymptotic
  behavior of the length of a longest increasing
  subsequence of $d-1$ randomly chosen permutations of $[n]$, when
  $n$ goes to infinity.
We first recall some known facts about this problem.
Given a positive integers $d$ and $n$, consider $d$ permutations
  of $\pi_{1},\ldots,\pi_{d}$ of $[n]$.
We say that 
  $L=\set{(i_{j},\pi_{1}(i_{j}),\ldots,\pi_{d}(i_{j})}{1\leq j\leq \ell}$ is 
  an increasing sequence of $(\pi_1,\ldots,\pi_d)$ of length $\ell$ if
  $i_1< i_2 <\ldots <i_{\ell}$ and 
  $\pi_{t}(i_1) < \pi_{t}(i_2) < \ldots < \pi_{t}(i_{\ell})$
  for $1\leq t\leq d$.
We denote by $lis_{d+1}(n)$ the random variable corresponding to the 
  length of a longest increasing subsequence of $(\pi_1,\ldots,\pi_d)$
  when $\pi_{1},\ldots,\pi_{d}$ are randomly and uniformly chosen.
The study of the asymptotic characteristics of the 
  distribution of $lis_{d}(n)$ will be henceforth referred to as 
  Ulam's problem in $d$ dimensions
  (note that the $d=2$ case corresponds precisely to the 
  setting discussed in the first paragraph of the introductory section
  of this work).

Ulam's problem in $d$-dimensions can be restated geometrically.
Indeed, consider $\vec{x}(1),\ldots,\vec{x}(n)$ uniformly and 
  independently chosen in the $d$-dimensional unit cube $[0,1]^{d}$
  endowed with the natural component wise partial order.
Let $H_{d}(n)$ be the length of a largest chain 
  $C\subseteq\setof{\vec{x}(1),\ldots,\vec{x}(n)}$.
It is not hard to see that $H_{d}(n)$ and $lis_{d}(n)$ follow
  the same distribution.
Bollob\'as and Winkler~\cite{bw88} showed that for every $d$ there exists
  a constant $c_{d}$ such that 
  $H_{d}(n)/\sqrt[d]{n}$ (and thus also 
  $lis_{d}(n)/\sqrt[d]{n}$) goes to $c_{d}$ as $n\to\infty$.
Only the values $c_1=1$ and $c_2=2$ are known for these constants.
However, in~\cite{bw88} it is shown that 
  $c_{i}\leq c_{i+1}$ and $c_{i}< e$ for all $i$, and that 
  the $\lim_{d\to\infty} c_{d} =e$.

\medskip
Now, back to our problem. 
Our immediate goal is to estimate a median of 
  $L(\calG(K_{n_1,\ldots,n_d},p))$.
Consider~$H$ chosen according to $\calG(K_{n_1,\ldots,n_d},p)$
  and let $H'$ be the hyper-subgraph of $H$ obtained from $H$ after removal
  of all edges incident to nodes of degree at least $2$.
Let $E=E(H)$ and $E'=E(H')$.
In order to approximate a median of 
  $L(\calG(K_{n_1,\ldots,n_d},p))$ it will be useful to estimate
  first the expected value of $L(H')$.
We now come to a crucial observation: $L(H')$ is precisely 
  the length of a largest chain (for the natural order among
  edges) contained in $E'$, or equivalently the length of a 
  longest increasing subsequence of $d-1$ permutations of 
  $\setof{1,\ldots,|E'|}$.
The preceding observation will enable us to build on the
  known results concerning Ulam's problem and use them 
  in the analysis of the Longest Non-crossing Matching
  problem for the random binomial hyper-graph model.
In particular, the following concentration result due to Bollob\'as and 
  Brightwell~\cite{bb92} for the length of a 
  $d$-dimensional longest increasing subsequence will be 
  useful for our purposes.
\begin{theorem}{[Bollob\'as and Brightwell~\cite[Theorem 8]{bb92}]}\label{th:bb}
For every  $d\geq 2$, there is a constant $D_{d}$ such that
  for $m$ sufficiently large and $2<\lambda < m^{1/2d}/\log\log m$,
\begin{eqnarray*}
\prob{}{\left| lis_{d}(m)-\expec{}{lis_{d}(m)} \right| > 
  \frac{\lambda D_{d}m^{1/2d}\log(m)}{\log\log(m)}}
  & \leq & 80\lambda^{2}e^{-\lambda^2}\,.
\end{eqnarray*}
\end{theorem}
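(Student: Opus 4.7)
The plan is to treat $lis_d(m)$ as a 1-Lipschitz and certifiable function of the $m$ i.i.d.\ points $\vec{x}(1),\ldots,\vec{x}(m)\in[0,1]^d$, which naturally calls for a Talagrand-type isoperimetric argument. First I would verify the two requisite structural properties: (i) changing a single point $\vec{x}(i)$ changes the length of a longest chain by at most one, so $lis_d$ is $1$-Lipschitz; and (ii) whenever $lis_d \ge \ell$, a specific chain of $\ell$ points certifies this bound, so $lis_d$ is $f$-certifiable in Talagrand's sense. Applied to any median $\mediannopar$ of $lis_d(m)$, Talagrand's inequality yields a bound of the form $\prob{}{|lis_d(m)-\mediannopar| \ge t} \le 4\exp(-t^2/(4\mediannopar))$, and since $\mediannopar = \Theta(m^{1/d})$ by the Bollob\'as--Winkler limit, setting $t = \lambda D'_d m^{1/2d}$ already produces Gaussian tails of the form $\exp(-\Omega(\lambda^2))$.

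The remaining challenge is the factor $\log(m)/\log\log(m)$ separating that bound from the one claimed in the theorem. I would expect this slack to arise from the gap between the mean and a median of $lis_d(m)$: from the raw Talagrand tail, integrating the resulting concentration inequality yields $|\expec{}{lis_d(m)} - \mediannopar| = O(m^{1/2d})$, but propagating this into a deviation about the mean with uniformly small constants across the range $2 < \lambda < m^{1/2d}/\log\log m$ appears to require an additional $\log(m)/\log\log(m)$ cushion. A clean way to obtain this is to iterate the Talagrand bound: apply it once to locate the median relative to the mean up to a factor $\log(m)$, then apply it again, losing only a $\log\log(m)$ worth of exponent in the final tail. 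The factor $80\lambda^2$ in front of $e^{-\lambda^2}$ is then absorbed by summing over a logarithmic number of scales (or by a polynomial union bound in $\lambda$).

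An alternative route, closer to what I suspect the original argument does, is a direct Doob-martingale analysis: reveal the points one at a time and observe that the increment $|X_{i+1}-X_i|$ of the Doob martingale of $lis_d$ is at most one, but is typically much smaller since the chain has length only $O(m^{1/d})$, and most individual points do not participate in a near-optimal chain. A second-moment (Bernstein-type) martingale inequality, exploiting $\sum_i \expec{}{(X_{i+1}-X_i)^2\mid \calF_i} = O(m^{1/d})$, reproduces the $m^{1/2d}$ deviation scale with Gaussian tails, and the extra $\log(m)/\log\log(m)$ arises when controlling the rare bad conditionings where the increment squared estimate fails.

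The main obstacle is not the qualitative concentration --- which follows essentially from Talagrand --- but rather pinning down the precise $\log(m)/\log\log(m)$ factor with the uniform constants $D_d$ and $80$ claimed in the statement. That requires either an iterated tail argument or a delicate Bernstein-for-martingales computation that carefully bounds the conditional variances under the (a priori unknown) event that a near-optimal chain exists. I would expect this bookkeeping, rather than the underlying probabilistic principle, to be where most of the technical work lies.
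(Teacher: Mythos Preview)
The paper does not prove this theorem at all: it is quoted verbatim from Bollob\'as and Brightwell~\cite[Theorem~8]{bb92} and used as a black box to derive Corollary~\ref{cor:lis-concentration}. So there is no ``paper's own proof'' to compare your proposal against.

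As for your sketch, the broad strokes are reasonable but somewhat off the mark relative to what Bollob\'as and Brightwell actually do. Their argument is indeed a martingale analysis, but the $\log(m)/\log\log(m)$ factor does \emph{not} arise from iterating a Talagrand bound or from the gap between mean and median; it comes from bounding the martingale differences using a deterministic combinatorial estimate on how much the deletion of a single point can change the longest chain length, combined with a careful application of a Bernstein-type martingale inequality. Your first route (Talagrand plus iteration) would give concentration at scale $m^{1/2d}$ but would not naturally produce the specific $\log(m)/\log\log(m)$ correction with the stated constants; your second route (Doob martingale with Bernstein) is closer in spirit to the original, though the source of the logarithmic factor you identify is not quite right. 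In any case, for the purposes of this paper the theorem is simply imported, and your task would only be to state it, not to reprove it.
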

We will not directly apply the preceding result. 
Instead, we rely on the following:
\begin{corollary}\label{cor:lis-concentration}
For every $d\geq 2$, $t>0$ and $\alpha>0$, there is a $m_{0}(t,\alpha,d)$ 
  sufficiently large such that if $m\geq m_{0}$, then
\begin{eqnarray*}
\prob{}{\left|lis_{d}(m)-c_{d}m^{1/d} \right| > tc_{d}m^{1/d}}
  & \leq & \alpha\,.
\end{eqnarray*}
\end{corollary}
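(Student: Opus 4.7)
The plan is to combine the Bollob\'as--Winkler limit law $lis_{d}(m)/m^{1/d}\to c_{d}$ with the Bollob\'as--Brightwell concentration estimate in Theorem~\ref{th:bb}, using the triangle inequality split
\[
\left|lis_{d}(m)-c_{d}m^{1/d}\right|
  \ \leq \ \left|lis_{d}(m)-\expec{}{lis_{d}(m)}\right|
    + \left|\expec{}{lis_{d}(m)}-c_{d}m^{1/d}\right|\,.
\]
The idea is to choose a constant $\lambda=\lambda(\alpha)$ large enough to make $80\lambda^{2}e^{-\lambda^{2}}\leq\alpha$, and then choose $m_{0}$ large enough that each of the two terms on the right is bounded by $(t/2)c_{d}m^{1/d}$.

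First I would take care of the deterministic term $\left|\expec{}{lis_{d}(m)}-c_{d}m^{1/d}\right|$. Bollob\'as and Winkler showed $lis_{d}(m)/m^{1/d}\to c_{d}$ in probability (indeed almost surely); combining this with Theorem~\ref{th:bb} applied with $\lambda=2$, the random variable $lis_{d}(m)$ lies within $O(m^{1/2d}\log m/\log\log m)=o(m^{1/d})$ of $\expec{}{lis_{d}(m)}$ with probability bounded away from zero for all large $m$. Were $\expec{}{lis_{d}(m)}/m^{1/d}$ to fail to converge to $c_{d}$, some subsequence would have $|\expec{}{lis_{d}(m)}-c_{d}m^{1/d}|>\epsilon m^{1/d}$, and then along that subsequence $lis_{d}(m)/m^{1/d}$ could not converge in probability to $c_{d}$, a contradiction. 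Hence $\expec{}{lis_{d}(m)}/m^{1/d}\to c_{d}$, so for $m\geq m_{1}(t,d)$ sufficiently large we have $|\expec{}{lis_{d}(m)}-c_{d}m^{1/d}|\leq (t/2)c_{d}m^{1/d}$.

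Next I would control the stochastic term. Fix $\lambda=\lambda(\alpha)\geq 2$ with $80\lambda^{2}e^{-\lambda^{2}}\leq\alpha$. Since the function $m^{1/2d}\log m/\log\log m$ is $o(m^{1/d})$, for all $m\geq m_{2}(t,\alpha,d)$ the two inequalities
\[
\lambda\ <\ \frac{m^{1/2d}}{\log\log m}
\qquad \text{and} \qquad
\frac{\lambda D_{d}\, m^{1/2d}\log m}{\log\log m}
  \ \leq\ \frac{t}{2}c_{d}m^{1/d}
\]
both hold. Then Theorem~\ref{th:bb} applies and yields
\[
\prob{}{\left|lis_{d}(m)-\expec{}{lis_{d}(m)}\right|>\tfrac{t}{2}c_{d}m^{1/d}}
  \ \leq\ 80\lambda^{2}e^{-\lambda^{2}}\ \leq\ \alpha\,.
\]
Setting $m_{0}=\max\{m_{1},m_{2}\}$ and combining via the triangle inequality finishes the proof.

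The only nonroutine step is the passage from the Bollob\'as--Winkler convergence in probability to convergence of expectations, since Theorem~\ref{th:bb} only quantifies deviations from the mean and not from $c_{d}m^{1/d}$; once that is in hand, the rest is a direct application of Theorem~\ref{th:bb} with a constant choice of~$\lambda$.
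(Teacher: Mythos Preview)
Your proof is correct and follows essentially the same approach as the paper: split via the triangle inequality, use the Bollob\'as--Winkler convergence for the deterministic term, and Theorem~\ref{th:bb} for the stochastic term. The only notable difference is technical: you fix a constant $\lambda=\lambda(\alpha)$ and then push $m$ large enough that the deviation scale $\lambda D_d m^{1/2d}\log m/\log\log m$ drops below $(t/2)c_d m^{1/d}$, whereas the paper instead lets $\lambda=\lambda(m)=\frac{tc_d}{2D_d}\cdot\frac{m^{1/2d}\log\log m}{\log m}$ grow with $m$ so that the deviation scale equals $(t/2)c_d m^{1/d}$ exactly, and then uses $\lambda(m)\to\infty$ to force $80\lambda^2 e^{-\lambda^2}\leq\alpha$. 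Both choices work; yours is arguably cleaner. You also take extra care to derive $\expec{}{lis_d(m)}/m^{1/d}\to c_d$ from convergence in probability plus concentration, whereas the paper simply cites it as known from Bollob\'as--Winkler.
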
  
\begin{proof}
Let $D_d$ be the constant in the statement of Theorem~\ref{th:bb}.
By definition of Ulam's constant, 
  we know that $\lim_{n\to\infty} \expec{}{lis_{d}(m)}/\sqrt[d]{m}=c_{d}$.
Hence, we can choose $m_{0}=m_{0}(t,\alpha,d)$ sufficiently large
  so that for all $m\geq m_{0}$, 
  Theorem~\ref{th:bb} holds and in addition the following 
  conditions are satisfied:
\begin{itemize}
\item $\left|\expec{}{lis_{d}(m)}\right|-c_{d}m^{1/d} < tc_{d}m^{1/d}/2$.

\item $\lambda = \lambda(m) \eqdef \frac{tc_{d}}{2D_{d}}\cdot 
  \frac{m^{1/2d}\log\log(m)}{\log(m)} \leq \frac{m^{1/2d}}{\log\log(m)}$
  and $80\lambda^{2}e^{-\lambda^{2}} \leq \alpha$.
\end{itemize}
(Both conditions can be satisfied since $(\log\log(m))^{2}=o(\log(m))$
  and given that $\lambda(m)\to\infty$ when $m\to\infty$.)
It follows that for all $m>m_{0}$,
\begin{eqnarray*}
\lefteqn{\prob{}{\left|lis_{d}(m)-c_{d}m^{1/d}\right|> tc_{d}m^{1/d}}
  } \\ 
  & \leq &   \prob{}{\left|lis_{d}(m)-\expec{}{lis_{d}(m)}\right| 
    +\left|\expec{}{lis_{d}(m)}-c_{d}m^{1/d}\right|>tc_{d}m^{1/d}} \\
  & \leq & 
  \prob{}{\left|lis_{d}(m)-\expec{}{lis_{d}(m)}\right|
    > \frac{1}{2}tc_{d}m^{1/d}} \\
  & = & 
  \prob{}{\left|lis_{d}(m)-\expec{}{lis_{d}(m)}\right|
    > \frac{\lambda D_{d}m^{1/2d}\log(m)}{\log\log(m)}}  \\
  & \leq &
  80\lambda^{2}e^{-\lambda^{2}}\,.
\end{eqnarray*}
\end{proof}
For future reference, we recall a well known variant of Chebyshev's 
  inequality.
\begin{proposition}{[Chebyshev's inequality for indicator random variables]}
Let $X_{1},\ldots,X_{m}$ be random variables taking values in $\setof{0,1}$ and
  let $X$ denote $X_{1}+\ldots+X_{m}$.
Also, let $\Delta=\sum_{i,j:i\neq j} \expec{}{X_{i}X_{j}}$.
Then, for all $t\geq 0$,
\begin{eqnarray*}
\prob{}{\left|X-\expec{}{X}\right| \geq t} & \leq & 
  \frac{1}{t^{2}}\left(\expec{}{X}(1-\expec{}{X})+\Delta\right)\,.
\end{eqnarray*}
Moreover, if $X_{1},\ldots,X_{m}$ are independent, then
\begin{eqnarray*}
\prob{}{\left|X-\expec{}{X}\right| \geq t} & \leq & 
  \frac{\expec{}{X}}{t^{2}}\,.
\end{eqnarray*}
\end{proposition}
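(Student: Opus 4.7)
The plan is to reduce the statement to the standard Chebyshev inequality by computing the variance of $X$ explicitly in the indicator setting. The key identity I would establish first is
\[
\var{}{X} \;=\; \expec{}{X}(1-\expec{}{X}) + \Delta,
\]
which, once in hand, makes both inequalities follow from Markov's inequality applied to $(X-\expec{}{X})^2$.

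To prove the identity, I would expand $\var{}{X} = \expec{}{X^2} - \expec{}{X}^2$ using linearity. Since each $X_i\in\{0,1\}$, one has $X_i^2 = X_i$, so
\[
\expec{}{X^2} \;=\; \sum_{i} \expec{}{X_i} + \sum_{i\neq j} \expec{}{X_iX_j}
  \;=\; \expec{}{X} + \Delta.
\]
Therefore $\var{}{X} = \expec{}{X} + \Delta - \expec{}{X}^2 = \expec{}{X}(1-\expec{}{X}) + \Delta$, which in particular confirms that the right-hand side is non-negative (as it should be). Applying Chebyshev's inequality in its standard form $\prob{}{|X-\expec{}{X}|\ge t}\le \var{}{X}/t^2$ then yields the first displayed bound.

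For the second bound, under independence of $X_1,\ldots,X_m$ one has $\expec{}{X_iX_j} = \expec{}{X_i}\expec{}{X_j}$ for $i\neq j$, hence
\[
\Delta \;=\; \sum_{i\neq j}\expec{}{X_i}\expec{}{X_j} \;=\; \expec{}{X}^2 - \sum_{i}\expec{}{X_i}^2.
\]
Substituting this into the identity collapses the expression to $\var{}{X} = \expec{}{X} - \sum_i \expec{}{X_i}^2 \le \expec{}{X}$, where the inequality is just non-negativity of the second-moment terms (or equivalently from $\expec{}{X_i}^2 \ge 0$). Plugging into Chebyshev gives the sharper bound $\expec{}{X}/t^2$.

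There is no real obstacle here: the only subtlety is noticing that $X_i^2=X_i$ for indicators, which is what makes the variance assume the particular form $\expec{}{X}(1-\expec{}{X}) + \Delta$ rather than the more general $\sum_i \var{}{X_i} + \sum_{i\neq j}\mathrm{Cov}(X_i,X_j)$. Once that observation is made, both inequalities are immediate consequences of Markov applied to the squared deviation.
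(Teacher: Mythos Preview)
Your proof is correct and follows essentially the same approach as the paper: both compute $\var{}{X}=\expec{}{X}(1-\expec{}{X})+\Delta$ using $X_i^2=X_i$ and then apply Chebyshev, and in the independent case both bound the variance by $\expec{}{X}$ (the paper simply notes $\Delta\le(\expec{}{X})^2$, while you compute $\Delta$ exactly, but the conclusion is the same).
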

\begin{proof}
Observe that since $X_{i}$ is an indicator variable, then 
  $\expec{}{X^{2}_{i}}=\expec{}{X_{i}}$.
Thus, if we let $\var{}{X}$ denote the variance of $X$,
\[
\var{}{X} 
  \ \ = \ \
  \expec{}{X^{2}}-(\expec{}{X})^{2}
  \ \ = \ \
  \sum^{m}_{i=1}\expec{}{X^{2}_{i}} + \Delta - (\expec{}{X})^{2}
  \ \ = \ \ 
  \expec{}{X}(1-\expec{}{X})+\Delta\,.
\]
A direct application of Chebyshev's inequality yields the first 
  bound claimed.
The second stated bound, follows from the first one and the 
  fact that if $X_{1},\ldots,X_{m}$ are independent, then
  $\Delta \leq (\expec{}{X})^{2}$.
\end{proof}

We will also need the following two lemmas.
\begin{lemma}\label{lem:fact}
Let $N$ and $S$ denote the geometric mean and sum of 
  $n_1,\ldots,n_d$.
If $\widetilde{N}=\left(\prod_{j=1}^{d}(n_j-1)\right)^{d}$,
  then $N^{d}-\widetilde{N}^{d}\leq S^{d-1}$.
\end{lemma}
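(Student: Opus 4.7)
The plan is to first parse the claim correctly. Reading $\widetilde{N}^d = \prod_{j=1}^{d}(n_j-1)$ (so that the exponent $d$ in the displayed definition of $\widetilde{N}$ is a typo for $1/d$, matching the way $N$ is defined), the inequality $N^d - \widetilde{N}^d \le S^{d-1}$ unwinds to
\[
\prod_{j=1}^{d} n_j \;-\; \prod_{j=1}^{d}(n_j-1) \;\le\; \Big(\sum_{j=1}^{d} n_j\Big)^{d-1}.
\]
This is exactly (a trivial rearrangement of) the unnamed lemma already used in Section~\ref{sec:lower}, namely $\prod(x_j-1) \ge \prod x_j - (\sum x_j)^{d-1}$, applied with $x_j = n_j$. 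So the shortest route is simply to quote that earlier result.

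For a self-contained argument I would induct on $d$, matching the approach sketched there. The base case $d=1$ is immediate: $n_1 - (n_1-1) = 1 = S^0$. For the inductive step, set $T = \sum_{j=1}^{d-1} n_j$ and peel off the last factor,
\[
\prod_{j=1}^{d} n_j - \prod_{j=1}^{d}(n_j-1) \;=\; n_d\!\left(\prod_{j=1}^{d-1} n_j - \prod_{j=1}^{d-1}(n_j-1)\right) + \prod_{j=1}^{d-1}(n_j-1).
\]
By the induction hypothesis the bracket is at most $T^{d-2}$, and since each factor satisfies $n_j - 1 \le n_j \le T$ the trailing product is at most $T^{d-1}$. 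Hence the left side is bounded by $n_d T^{d-2} + T^{d-1}$.

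To close, I would compare with $S^{d-1} = (T+n_d)^{d-1}$: since its binomial expansion has only nonnegative terms,
\[
(T+n_d)^{d-1} \;\ge\; T^{d-1} + (d-1)\, n_d T^{d-2} \;\ge\; T^{d-1} + n_d T^{d-2} \quad (d \ge 2),
\]
which is precisely the needed bound and completes the induction. No serious obstacle arises; the only small point of care is using the coarse estimate $n_j \le T$ (rather than the sharper $n_j - 1 \le n_j$) on the trailing product, so that the two surviving terms $n_d T^{d-2}$ and $T^{d-1}$ line up exactly with the two leading terms of the binomial expansion of $S^{d-1}$.
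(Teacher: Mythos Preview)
Your proposal is correct and matches the paper's approach: the paper's one-line proof cites the earlier inequality $\prod_j(x_j-1)\ge \prod_j x_j - (\sum_j x_j)^{d-1}$ from Section~\ref{sec:lower} (the cross-reference to Lemma~\ref{lem:holder} is a typo), which is exactly the lemma you invoke. Your self-contained induction fills in the ``by induction on $d$'' that the paper leaves to the reader, and the details are right.
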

\begin{proof}
Direct application of Lemma~\ref{lem:holder}.
\end{proof}

\begin{lemma}\label{lem:binomial-prep}
Let $N$ and $S$ denote the geometric mean and sum of 
  $n_1,\ldots,n_d$.
If $\widetilde{N}=\left(\prod_{j=1}^{d}(n_j-1)\right)^{d}$, 
  then the following hold:
\begin{eqnarray}
\expec{}{|E|} & = & N^{d}p\,, \label{eqn:binomial-first} \\
\expec{}{|E'|} & = & N^{d}p(1-p)^{N^d-\widetilde{N}^{d}}
  \ \ \geq \ \ N^{d}p(1-S^{d-1}p)\,,
  \label{eqn:binomial-second} \\
\expec{}{|E\setminus E'|} & \leq & N^{d}S^{d-1}p^{2}\,. 
  \label{eqn:binomial-third}
\end{eqnarray}
Moreover, for all $\eta>0$, 
\begin{eqnarray}\label{eqn:binomial-fourth}
\prob{}{|E|-\expec{}{|E|}\geq \eta\expec{}{|E|}}
  & \leq & \frac{1}{\eta^{2}\expec{}{|E|}}\,.
\end{eqnarray}
\end{lemma}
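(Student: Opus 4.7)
The plan is to verify the four assertions by direct computation, exploiting the independence of the edge indicators in $\calG(K_{n_1,\ldots,n_d},p)$. The complete hyper-graph $K_{n_1,\ldots,n_d}$ has $\prod_{j}n_j=N^{d}$ edges, each present in $H$ independently with probability $p$, so $|E|$ is binomially distributed with parameters $N^{d}$ and $p$. Claim~\eqref{eqn:binomial-first} is then immediate from linearity of expectation, and claim~\eqref{eqn:binomial-fourth} follows from the second, independent-case bound in the Chebyshev-type proposition stated just above, since $\var{}{|E|}=N^{d}p(1-p)\leq\expec{}{|E|}$.

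The core of the lemma is~\eqref{eqn:binomial-second}, for which the strategy is to compute $\prob{}{e\in E'}$ for each fixed edge $e$ and sum over $e$. Fix $e=(v_1,\ldots,v_d)$; the edges of $K_{n_1,\ldots,n_d}$ sharing no vertex with $e$ are exactly those $(w_1,\ldots,w_d)$ with $w_j\neq v_j$ for every $j$, of which there are $\prod_{j}(n_j-1)=\widetilde{N}^{d}$. Hence the number of edges incident to at least one vertex of $e$ equals $N^{d}-\widetilde{N}^{d}$. Since all edge indicators are mutually independent, $\prob{}{e\in E'}$ factors as $p$ (for the event $e\in E$) times $(1-p)$ raised to the number of remaining edges conflicting with $e$, and summing over the $N^{d}$ choices of $e$ produces the formula for $\expec{}{|E'|}$ stated in the lemma. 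The lower bound then follows by applying Bernoulli's inequality $(1-p)^{m}\geq 1-mp$ with $m=N^{d}-\widetilde{N}^{d}$ and invoking Lemma~\ref{lem:fact}, which supplies $N^{d}-\widetilde{N}^{d}\leq S^{d-1}$.

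Finally, claim~\eqref{eqn:binomial-third} is a one-line consequence of linearity of expectation together with the lower bound just proved: $\expec{}{|E\setminus E'|}=\expec{}{|E|}-\expec{}{|E'|}\leq N^{d}p-N^{d}p(1-S^{d-1}p)=N^{d}S^{d-1}p^{2}$. No serious obstacle arises anywhere; every step is either a routine combinatorial count (counting edges disjoint from $e$), an application of Bernoulli's inequality, an appeal to the already established Lemma~\ref{lem:fact}, or the independent-case Chebyshev bound at hand.
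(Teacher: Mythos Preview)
Your proof is correct and follows essentially the same approach as the paper: both introduce indicator variables for $e\in E$ and $e\in E'$, count the $N^{d}-\widetilde{N}^{d}$ edges meeting a fixed $e$ to obtain the formula for $\expec{}{|E'|}$, apply Bernoulli and Lemma~\ref{lem:fact} for the lower bound, deduce~\eqref{eqn:binomial-third} by subtraction, and invoke the independent-case Chebyshev bound for~\eqref{eqn:binomial-fourth}. There is no meaningful difference in strategy or detail.
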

\begin{proof}
Let $K=K_{n_1,\ldots,n_k}$, and for each $e\in E(K)$ let $X_e$ and 
  $Y_e$ denote the indicators of the events $e\in E$ and $e\in E'$,
  respectively.
Note that $|E|=\sum_{e\in E(K)} X_{e}$ and $|E'|=\sum_{e\in E(K)} Y_{e}$.
Clearly, $\expec{}{X_{e}}=p$ for all $e\in E(K)$.
Moreover, $e\in E'$ if and only if $e\in E$ and no edge 
  $f\in E\setminus \setof{e}$ intersects $e$.
Since the number of edges in $E(K)$ that intersect any given $e\in E(K)$
  is exactly $N^{d}-\widetilde{N}^{d}$, we have that
  $\expec{}{Y_{e}}=p(1-p)^{N^{d}-\widetilde{N}^{d}}$.
Observing that $|E(K)|=N^{d}$ we obtain~\eqref{eqn:binomial-first}
  and the first equality in~\eqref{eqn:binomial-second}.
On the other hand, since $(1-p)^{m}\geq 1-pm$ and by 
  Lemma~\ref{lem:fact}, we can 
  finish the proof of~\eqref{eqn:binomial-second} by noting that
\[
\expec{}{|E'|} 
  \ \ = \ \ 
  N^{d}p(1-p)^{N^{d}-\widetilde{N}^{d}}
  \ \ \geq \ \ 
  N^{d}p(1-(N^{d}-\widetilde{N}^{d})p) 
  \ \ \geq \ \ 
  N^{d}p(1-S^{d-1}p)\,.
\]
Inequality~\eqref{eqn:binomial-third} is a consequence 
  of~\eqref{eqn:binomial-first}, \eqref{eqn:binomial-second}, 
  and the fact that $E'\subseteq E$, as follows:
\[
\expec{}{|E\setminus E'|} 
  \ \ = \ \ \expec{}{|E|-|E'|}
  \ \ \leq \ \ 
  N^{d}S^{d-1}p^{2}\,,
\]
Applying Chebyshev's inequality for 
  independent indicator random variables
  $\set{X_{e}}{e\in E(K)}$ yields~\eqref{eqn:binomial-fourth}.
\end{proof}
We are now ready to exploit the fact, already mentioned, that $L(H')$
  equals the length of a longest increasing subsequence of 
  $d-1$ permutations of $\setof{1,\ldots,|E'|}$, and then
  apply Corollary~\ref{cor:lis-concentration} in order to 
  estimate its value.
Formally, we prove the following claim.
\begin{proposition}\label{prop:binomial}
Let $\delta > 0$, $d\geq 2$, and 
  $N$ and $S$ be the geometric mean and sum of 
  positive integers $n_1,\ldots,n_d$, respectively.
Moreover, let $M=c_{d}Np^{1/d}$ where $c_d$ is the $d$-dimensional Ulam
  constant.
Then, there is a constant $C=C(\delta)$ sufficiently large such
  that:
\begin{itemize}
\item If $Np^{1/d}\geq C$ and $12S^{2d-2}p^{2-1/d}\leq d^{d-1}\delta c_{d}$, 
  then every median of $L(\calG(K_{n_1,\ldots,n_d},p))$ is 
  at most $(1+\delta)M$.

\item If $Np^{1/d}\geq C$ and $12S^{d-1}p\leq \delta$, then
  every median of $L(\calG(K_{n_1,\ldots,n_d},p))$ is at least $(1-\delta)M$.
\end{itemize}
\end{proposition}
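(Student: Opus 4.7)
The plan is to sandwich $L(H)$ between $L(H')$ and $L(H')+|E\setminus E'|$ and reduce $L(H')$ to Ulam's problem in dimension $d$. The key observation underlying the reduction is that, by the symmetry of $\calG(K_{n_1,\ldots,n_d},p)$ under permutations of vertices within each color class, conditional on $|E'|=m$ the set $E'$ is uniformly distributed over matchings of size $m$ in $K_{n_1,\ldots,n_d}$; hence conditional on $|E'|=m$, the random variable $L(H')$ is distributed exactly as $lis_d(m)$, so Corollary~\ref{cor:lis-concentration} applies. Note that $M^d/c_d^d=N^dp\approx\expec{}{|E'|}$, which is precisely what makes $c_d|E'|^{1/d}$ close to $M$.

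For the upper bound on every median, fix small $\eta_1,\eta_2,\eta_3>0$ with $(1+\eta_3)(1+\eta_1)^{1/d}+\eta_2\le 1+\delta$, and bound $\prob{}{L(H)>(1+\delta)M}$ by the probabilities of three failure events whose total is less than $1/2$: (i) $|E|>(1+\eta_1)N^dp$, controlled by~\eqref{eqn:binomial-fourth} (Chebyshev on the independent indicators defining $|E|$), which is arbitrarily small once $Np^{1/d}\ge C$ is large enough; (ii) $|E\setminus E'|>\eta_2M$, controlled via~\eqref{eqn:binomial-third} and Markov's inequality to give a bound $N^{d-1}S^{d-1}p^{2-1/d}/(\eta_2c_d)$, which using the arithmetic--geometric inequality $S\ge dN$ to bound $N^{d-1}\le S^{d-1}/d^{d-1}$ becomes at most $\delta/(12\eta_2)$ by the hypothesis $12S^{2d-2}p^{2-1/d}\le d^{d-1}\delta c_d$; (iii) $L(H')>(1+\eta_3)c_d|E'|^{1/d}$, which on the event $|E'|\le(1+\eta_1)N^dp$ yields $L(H')\le(1+\eta_3)(1+\eta_1)^{1/d}M$, and is controlled by Corollary~\ref{cor:lis-concentration} applied conditionally on $|E'|\ge m_0(\eta_3,1/6,d)$ (a constant depending only on $\delta$ and $d$), which holds with high probability for $Np^{1/d}\ge C$ large. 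Together these give $\prob{}{L(H)>(1+\delta)M}<1/2$, so every median of $L(H)$ is at most $(1+\delta)M$.

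For the lower bound on every median, use $L(H)\ge L(H')$ and the chain $|E'|\ge|E|-|E\setminus E'|$. The hypothesis $12S^{d-1}p\le\delta$ combined with~\eqref{eqn:binomial-second} gives $\expec{}{|E'|}\ge(1-\delta/12)N^dp$, while~\eqref{eqn:binomial-third} gives $\expec{}{|E\setminus E'|}\le \delta N^dp/12$; Chebyshev on $|E|$ via~\eqref{eqn:binomial-fourth} and Markov on $|E\setminus E'|$ then yield $|E'|\ge(1-\eta_1)N^dp$ with small error probability, and Corollary~\ref{cor:lis-concentration} applied conditionally produces $L(H')\ge(1-\eta_3)c_d|E'|^{1/d}\ge(1-\delta)M$ with probability exceeding $1/2$. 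The main obstacle is coordinating the probabilistic budget so that all concentration errors (Chebyshev for $|E|$, Markov for $|E\setminus E'|$, and Corollary~\ref{cor:lis-concentration} for $lis_d$) are simultaneously small; the two hypotheses of the proposition are calibrated precisely to make this possible when $Np^{1/d}\ge C=C(\delta)$ is sufficiently large. A minor subtlety is verifying the claimed uniformity of $E'$ conditional on $|E'|=m$, which follows from the invariance of the model under permuting vertices within each color class and the transitivity of that action on matchings of a fixed size.
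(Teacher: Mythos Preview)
Your proposal is correct and follows essentially the same route as the paper: both reduce to showing $\prob{}{L(H)\gtrless(1\pm\delta)M}\le 1/2$ via the sandwich $L(H')\le L(H)\le L(H')+|E\setminus E'|$, control $|E|$ by Chebyshev (\eqref{eqn:binomial-fourth}), control $|E\setminus E'|$ by Markov and \eqref{eqn:binomial-third} (exploiting $N\le S/d$ to turn $N^{d-1}S^{d-1}$ into $S^{2d-2}/d^{d-1}$, which is exactly where the hypothesis $12S^{2d-2}p^{2-1/d}\le d^{d-1}\delta c_d$ enters), and control $L(H')$ via the identification of its conditional law with $lis_d(|E'|)$ together with Corollary~\ref{cor:lis-concentration}. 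The only cosmetic differences are that the paper fixes the split at $\delta/2$ and uses a deterministic threshold $m=\lfloor(1\pm\delta/2)M^d/c_d^d\rfloor$ together with stochastic monotonicity of $lis_d(\cdot)$, whereas you carry flexible $\eta_i$'s and a random threshold $(1+\eta_3)c_d|E'|^{1/d}$, which obliges you to additionally dispose of the event $|E'|<m_0$ (easy, as you note, once $Np^{1/d}\ge C$); you also make explicit the symmetry/transitivity argument for the conditional uniformity of $E'$ that the paper simply asserts.
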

\begin{proof}
To prove that every median of $L(\calG(K_{n_1,\ldots,n_d},p))$ is 
  at most $(1+\delta)M$, it suffices to show that 
  $\prob{}{L(H)\geq (1+\delta)M}$ is at most $1/2$.
To establish the latter, note that $L(H)\leq L(H')+|E\setminus E'|$, hence
\begin{eqnarray*}
\prob{}{L(H)\geq (1{+}\delta)M}
  & \leq & 
   \prob{}{|E\setminus E'|\geq \frac{M\delta}{2}}
   +\prob{}{L(H')\geq (1+\delta/2)M} \\
  & \leq &
  \prob{}{|E\setminus E'|\geq \frac{M\delta}{2}}
  +\prob{}{|E'|\geq (1+\delta/2)\frac{M^{d}}{c_{d}^{d}}} \\
  & & \quad
  +\prob{}{L(H')\geq (1+\delta/2)M, |E'|<(1+\delta/2)\frac{M^{d}}{c_{d}^{d}}}\,.
\end{eqnarray*}
We now separately upper bound each of the latter three terms.
For the first one, we rely on Markov's inequality, 
  inequality~\eqref{eqn:binomial-second}
  of Lemma~\ref{lem:binomial-prep}, 
  the fact that $N\leq S/d$, 
  and our hypothesis, to conclude that:
\begin{eqnarray*}
\lefteqn{\prob{}{|E\setminus E'|\geq \frac{M\delta}{2}}
  \ \ \leq \ \ 
  \frac{2}{M\delta}\expec{}{|E\setminus E'|}
  \ \ \leq \ \ 
  \frac{2N^{d}S^{d-1}p^{2}}{\delta c_{d}Np^{1/d}} } \\
  & & \quad
  \ \ = \ \ 
  \frac{2N^{d-1}S^{d-1}p^{2-1/d}}{\delta c_{d}}
  \ \ \leq \ \ 
  \frac{2S^{2d-2}p^{2-1/d}}{d^{d-1}\delta c_{d}}
  \ \ \leq \ \ 
  \frac{1}{6}\,.
\end{eqnarray*}
To bound the second term, note that 
  $|E|\geq |E'|$, and recall~\eqref{eqn:binomial-first} 
  and~\eqref{eqn:binomial-fourth} of 
  Lemma~\ref{lem:binomial-prep}, so
\[
\prob{}{|E'|\geq (1+\delta/2)\frac{M^{d}}{c_{d}^{d}}}
  \ \ = \ \
  \prob{}{|E|\geq (1+\delta/2)\expec{}{|E|}}
  \ \ \leq \ \
  \frac{4}{\delta^{2}\expec{}{|E|}}
  \ \ = \ \
  \frac{4}{\delta^{2}N^{d}p}\,.  
\]
Since by assumption $N^{d}p\geq C^{d}$, it suffices to take
  $C^{d}\geq 24/\delta^{2}$ in order to derive an upper 
  bound of $1/6$ for the second term.

Finally, we focus on the third term. 
Let $m=\lfloor (1+\delta/2)M^{d}/c_{d}^{d}\rfloor$.
Recall that conditioned on $|E'|=n'$, the random
  variable $L(H')$ follows the same distribution 
  as $lis(n')$.
Thus, since $n\geq n'$ implies that $lis(n)$ dominates $lis(n')$,
  and given that $(1+x)^{a}\leq 1+ax$ for $x\geq -1$ and $0< a < 1$,
\begin{eqnarray*}
\lefteqn{\prob{}{L(H')\geq (1+\delta/2)M,|E'|<(1+\delta/2)\frac{M^{d}}{c_{d}^{d}}}
  \ \ \leq \ \ 
  \prob{}{lis_{d}(m)\geq\frac{1+\delta/2}{(1+\delta/2)^{1/d}}c_{d}m^{1/d}} } \\
  & \leq & 
  \prob{}{lis_{d}(m)\geq\frac{1+\delta/2}{1+\delta/(2d)}c_{d}m^{1/d}} 
  \ \ = \ \
  \prob{}{lis_{d}(m)\geq\left(1+\frac{(d-1)\delta}{2d+\delta}\right)c_{d}m^{1/d}}\,.
\end{eqnarray*}  
Setting $t=(d-1)\delta/(2d+\delta)$ and requiring that 
  $C^{d}\geq m_{0}+1$ with $m_{0}=m_{0}(t,1/6,d)$ as in 
  Corollary~\ref{cor:lis-concentration}, and since by assumption
  $N^{d}p\geq C^{d}$, we have
\[
m \ \ = \ \ \lfloor (1+\delta/2)M^{d}/c_{d}^{d}\rfloor
  \ \ = \ \ \lfloor (1+\delta/2)N^{d}p\rfloor
  \ \ \geq \ \ \lfloor C^{d} \rfloor 
  \ \ \geq \ \ m_{0}\,.
\]
Thus, we can apply Corollary~\ref{cor:lis-concentration} and conclude that
\[
\prob{}{L(H')\geq (1+\delta/2)M,|E'|<(1+\delta/2)M^{d}/c_{d}^{d}}
  \ \ \leq \ \ \frac{1}{6}\,.
\]
In summary, $\prob{}{L(H)\geq (1+\delta)M} \leq 3(1/6) = 1/2$ as 
  we wanted to show.

Now, to prove that every median of $L(\calG(K_{n_1,\ldots,n_d},p))$ is 
  at least $(1-\delta)M$, it suffices to show that 
  $\prob{}{L(H)\leq (1-\delta)M}$ is at most $1/2$.
Note that $L(\cdot)$ is non-negative, so 
  we can always assume that $\delta \leq 1$.
Since $L(H')\leq L(H)$,
\begin{eqnarray*}
\prob{}{L(H) \leq (1-\delta)M}
  & \leq &
  \prob{}{|E|\leq (1-\delta)\frac{M^{d}}{c_{d}^{d}}}
   + \prob{}{L(H')\leq (1-\delta)M, |E|>(1-\delta)\frac{M^{d}}{c_{d}^{d}}} \\
  & \leq & 
  \prob{}{|E|\leq (1-\delta)\frac{M^{d}}{c_{d}^{d}}}
   + \prob{}{|E\setminus E'|\geq (\delta/2)\frac{M^{d}}{c_{d}^{d}}} \\
  & & \quad
   + \prob{}{L(H')\leq (1-\delta)M, |E'|>(1-\delta/2)\frac{M^{d}}{c_{d}^{d}}}
\end{eqnarray*}
As above, we separately bound each of the two latter terms.
In the case of the first term, by~\eqref{eqn:binomial-first}
  and~\eqref{eqn:binomial-fourth} of Lemma~\ref{lem:binomial-prep},
\[
\prob{}{|E|\leq (1-\delta)\frac{M^{d}}{c_{d}^{d}}}
  \ \ = \ \
  \prob{}{|E| \leq (1-\delta)\expec{}{|E|}} 
  \ \ \leq \ \
  \frac{1}{\delta^{2}\expec{}{|E|}}
  \ \ = \ \ 
  \frac{1}{\delta^{2}N^{d}p}\,.
\]
Since by assumption $N^{d}p\geq C^{d}$, it suffices to take 
  $C^{d}\geq 6/\delta^{2}$ in order to establish an upper bound
  of $1/6$ for the term under consideration.

To bound the second term, simply apply Markov's inequality, 
  use~\eqref{eqn:binomial-third} of Lemma~\ref{lem:binomial-prep},
  and recall that by assumption $12pS^{d-1}\leq\delta$ ---
  an upper bound of $1/6$ follows for the term
  under consideration.

Now, for the third term, let $m=\lceil (1-\delta/2)M^{d}/c_{d}^{d}\rceil$.
Recall that conditioned on $|E'|=n'$, the random
  variable $L(H')$ follows the same distribution 
  as $lis(n')$.
Thus, since $n'\geq n$ implies that $lis(n')$ dominates $lis(n)$,
  some basic arithmetic and given that 
  $(1+x)^{a}\leq 1+ax$ for $x\geq -1$ and $0< a < 1$,
\begin{eqnarray*}
\lefteqn{\prob{}{L(H')\leq (1-\delta)M,|E'|>(1-\delta/2)\frac{M^{d}}{c_{d}^{d}}}
  \ \ \leq \ \ \prob{}{lis_{d}(m) \leq (1-\delta)M}} \\
  & & \quad \leq \ \ \prob{}{lis_{d}(m) \leq 
    \frac{1-\delta}{(1-\delta/2)^{1/d}}c_{d}m^{1/d}} 
  \ \ \leq \ \ 
  \prob{}{lis_{d}(m) \leq (1-\delta/2)^{1-1/d}c_{d}m^{1/d}} \\
  & & \quad \leq \ \ 
  \prob{}{lis_{d}(m) \leq \left(1-\frac{\delta}{2}\left(1-\frac{1}{d}\right)\right)c_{d}m^{1/d}}\,.
\end{eqnarray*}
Setting $t=(\delta/2)(1-1/d)$, requiring that 
  $C\geq (m_{0}/(1-\delta/2))^{1/d}$
  with $m_{0}=m_{0}(t,1/6,d)$ as in Corollary~\ref{cor:lis-concentration},
  and since by assumption $N^{d}p\geq C^{d}$, we get
\[
m \ \ \geq \ \ (1-\delta/2)\frac{M^{d}}{c_{d}^{d}}
  \ \ = \ \ (1-\delta/2)N^{d}p
  \ \ \geq \ \ (1-\delta/2)C^{d} 
  \ \ \geq \ \ m_{0}\,.
\]  
Thus, we can apply Corollary~\ref{cor:lis-concentration} and conclude that
  the third term is also upper bounded by $1/6$.

Summarizing, $\prob{}{L(H)\leq (1-\delta)M} \leq 3(1/6)=1/2$ as
  we wanted to show.
\end{proof}

\begin{corollary}\label{cor:random-binomial-median}
Let $d\geq 2$.
If $t=1/p$, then the model $(\calG(K_{n_1,\ldots,n_d},p))$ of internal parameter
  $t$ admits a $(c,\lambda,\theta)$-median where
\begin{eqnarray*}
(c,\lambda,\theta) & = & \left(c_d,\frac{1}{d},\frac{2d-1}{2d(d-1)}\right)\,.
\end{eqnarray*}
\end{corollary}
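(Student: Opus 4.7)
The plan is to directly apply Proposition~\ref{prop:binomial}. Setting $t=1/p$, $c=c_d$, $\lambda=1/d$, and $\theta=(2d-1)/(2d(d-1))$, the quantity $cN/t^\lambda$ appearing in Definition~\ref{def:approximate-median} equals $c_dNp^{1/d}$, which is exactly the target value $M$ in Proposition~\ref{prop:binomial}. Thus, given $\delta>0$, it suffices to produce constants $a=a(\delta)$, $b=b(\delta)$, $t'=t'(\delta)$ such that whenever $N\geq at^\lambda$ and $Sb\leq t^\theta$, the two hypotheses of Proposition~\ref{prop:binomial} hold (with the same~$\delta$); then the median lies between $(1-\delta)M$ and $(1+\delta)M$.

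The first hypothesis $Np^{1/d}\geq C(\delta)$ follows immediately from the size lower bound by taking $a=C(\delta)$, since $Np^{1/d}=N/t^{1/d}\geq a$. For the two tail conditions I will verify the key arithmetic identity driving the choice of $\theta$: $Sb\leq t^{\theta}=p^{-(2d-1)/(2d(d-1))}$ raised to the power $2d-2$ yields
\[
(Sb)^{2d-2}\ \leq\ p^{-(2d-1)/d}\ =\ p^{-(2-1/d)},
\]
that is, $S^{2d-2}p^{2-1/d}\leq 1/b^{2d-2}$. Hence taking $b$ large enough so that $12/b^{2d-2}\leq d^{d-1}\delta c_d$ secures the upper-bound hypothesis $12S^{2d-2}p^{2-1/d}\leq d^{d-1}\delta c_d$ of Proposition~\ref{prop:binomial}.

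For the lower-bound hypothesis $12S^{d-1}p\leq \delta$, I would raise $Sb\leq p^{-(2d-1)/(2d(d-1))}$ to the power $d-1$ to obtain $S^{d-1}\leq p^{-(2d-1)/(2d)}/b^{d-1}$, and multiply by $p$:
\[
S^{d-1}p\ \leq\ \frac{p^{1-(2d-1)/(2d)}}{b^{d-1}}\ =\ \frac{p^{1/(2d)}}{b^{d-1}}.
\]
Since $p=1/t\to 0$, the right-hand side tends to $0$ as $t\to\infty$ (with $b$ already fixed). Choosing $t'(\delta)$ large enough so that $12p^{1/(2d)}/b^{d-1}\leq\delta$ for all $t\geq t'$ completes the verification.

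With $a$, $b$, and $t'$ so chosen, Proposition~\ref{prop:binomial} delivers both bounds on every median of $L(\calG(K_{n_1,\ldots,n_d},p))$, establishing the corollary. The only real ``content'' is the algebraic match between $\theta=(2d-1)/(2d(d-1))$ and the two exponents $S^{2d-2}p^{2-1/d}$ and $S^{d-1}p$ appearing in Proposition~\ref{prop:binomial}; everything else is bookkeeping of constants. I do not anticipate any serious obstacle, since the value of $\theta$ has evidently been reverse-engineered to make the tighter of the two conditions (the one involving $S^{2d-2}p^{2-1/d}$) tight, while the looser lower-bound condition is absorbed by choosing $t$ sufficiently large.
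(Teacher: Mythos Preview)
Your proposal is correct and follows essentially the same approach as the paper: both set $a(\delta)=C(\delta)$, choose $b(\delta)=\bigl(12/(d^{d-1}\delta c_d)\bigr)^{1/(2d-2)}$ so that the condition $12S^{2d-2}p^{2-1/d}\leq d^{d-1}\delta c_d$ follows from $Sb\leq t^{\theta}$, and then pick $t'(\delta)$ large enough to force $12S^{d-1}p\leq\delta$ via the residual factor $p^{1/(2d)}$. Your write-up is in fact more explicit than the paper's about the arithmetic linking $\theta=(2d-1)/(2d(d-1))$ to the two exponent conditions in Proposition~\ref{prop:binomial}.
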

\begin{proof}
As usual, let $N$ and $S$ denote the geometric mean and sum 
  of $n_1,\ldots,n_d$.
Let $H$ be chosen according to $\calG(K_{n_1,\ldots,n_d},p)$,
  $M=cN/t^{\lambda}=c_dNp^{1/d}$, $\delta>0$, and 
  $C(\delta)$ be as in Proposition~\ref{prop:binomial}.
Define $a(\delta)=C(\delta)$, $b(\delta)=(12/(\delta d^{d-1}c_{d}))^{1/(2d-2)}$
  and $t'(\delta)$ sufficiently large so $t>t'(\delta)$ and
  $t^{1-1/(2d)} < (\delta/12)t(b(\delta))^{d-1}$.
Note that if $t>t'(\delta)$, $N\geq a(\delta)t^{1/d}$, and
  $Sb(\delta)\leq t^{(2d-1)/(2d(d-1))}$, then the hypothesis 
  of Proposition~\ref{prop:binomial} will be satisfied, and thence
  every median of $L(H)$ will be between $(1-\delta)M$ 
  and $(1+\delta)M$.
\end{proof}
Recalling that by Proposition~\ref{prop:model-concentration-constants}
  we know that $h=1/4$ is a concentration constant for 
  the $d$-dimensional binomial random hyper-graph model,   
  by Corollary~\ref{cor:random-binomial-median} and the 
  Main Theorem, we obtain the following:
\begin{theorem}\label{th:random-binomial}
Let $\epsilon>0$ and $g:\RR\to\RR$ be such that $g(t)=O(t^{\eta})$
  for a given $0\leq\eta < 1/(2d(d-1))$. 
Fix $n_1,\ldots,n_d$ and let $N$ and $S$ denote their geometric
  mean and sum, respectively. 
There exists a sufficiently small $p_0$ and sufficiently large 
  $A$ such that if $p\leq p_{0}$, $Np^{1/d}\geq A$ and $S\leq g(1/p)N$, 
  then for $M=c_dNp^{1/d}$ where $c_d$ is the $d$-dimensional Ulam
  constant,
\[
(1-\epsilon)M 
  \ \ \leq \ \ 
  \expec{}{L(\calG(K_{n_1,\ldots,n_d},p))}
  \ \ \leq \ \ 
  (1+\epsilon)M\,,
\]
and the following hold:
\begin{itemize}
\item If $\median{}{L(\calG(K_{n_1,\ldots,n_d},p))}$ is a median
  of $L(\calG(K_{n_1,\ldots,n_d},p))$,
\[
(1-\epsilon)M 
  \ \ \leq \ \ 
  \median{}{L(\calG(K_{n_1,\ldots,n_d},p))}
  \ \ \leq \ \ 
  (1+\epsilon)M\,.
\]

\item 
There is an absolute constant $C>0$ such that
\begin{eqnarray*}
\prob{}{L(\calG(K_{n_1,\ldots,n_d},p))\leq (1-\epsilon)M}
  & \leq & \exp\left(-C\epsilon^{2}M\right)\,, \\
\prob{}{L(\calG(K_{n_1,\ldots,n_d},p))\geq (1+\epsilon)M}
  & \leq & \exp\left(-C\frac{\epsilon^{2}}{1+\epsilon}M\right)\,.
\end{eqnarray*}
\end{itemize}
\end{theorem}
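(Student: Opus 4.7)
The plan is to deduce this theorem as a direct corollary of the Main Theorem, using the ingredients already assembled: Proposition~\ref{prop:model-concentration-constants}, which tells us that $\calG(K_{n_1,\ldots,n_d},p)$ has concentration constant $h=1/4$, and Corollary~\ref{cor:random-binomial-median}, which tells us that with internal parameter $t=1/p$ the model admits a $(c_d,\lambda,\theta)$-median for $\lambda = 1/d$ and $\theta = (2d-1)/(2d(d-1))$. Once these are identified, essentially nothing else is left to do: we just have to translate the conditions of the Main Theorem into the language of the current statement.

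The first bookkeeping step is to verify that the admissible range for the exponent $\eta$ matches what is needed. The Main Theorem requires $0\leq\eta\leq\min\{\lambda/(d-1),\theta-\lambda\}$. A direct computation gives $\lambda/(d-1)=1/(d(d-1))$ and $\theta-\lambda = (2d-1)/(2d(d-1))-1/d = 1/(2d(d-1))$, so the minimum is $1/(2d(d-1))$, which is precisely the bound on $\eta$ in the hypothesis. Thus any $g$ with $g(t)=O(t^\eta)$ for some such $\eta$ is a permissible balance function.

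With this in place, fix $\epsilon>0$. The Main Theorem produces constants $t_0$ and $A$, depending on $\epsilon$, $d$, $h$, and $g$, such that whenever $t\geq t_0$, $N\geq At^\lambda$, and $S\leq g(t)N$, the four conclusions of the Main Theorem hold. Setting $p_0 = 1/t_0$, the condition $p\leq p_0$ is the same as $t=1/p\geq t_0$, the size condition $N\geq At^\lambda$ reads $Np^{1/d}\geq A$, and the balance condition becomes $S\leq g(1/p)N$, which matches the hypotheses of the theorem verbatim. Substituting $cN/t^\lambda = c_d Np^{1/d} = M$ and $h=1/4$, the two bounds on the expectation, the two bounds on any median, and the two tail bounds from the Main Theorem yield the six displayed inequalities, with the absolute constant $C$ of the tail bounds obtained by absorbing the factor $Kh = K/4$ from the Main Theorem.

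There is no substantive obstacle: the heavy lifting was done in establishing Proposition~\ref{prop:binomial} (via Corollary~\ref{cor:lis-concentration}, which imports the sharp concentration of $lis_d$ from Bollob\'as and Brightwell) and in the proof of the Main Theorem itself. The only mildly delicate point to double check is the arithmetic showing $\min\{\lambda/(d-1),\theta-\lambda\} = 1/(2d(d-1))$, since the choice of $\theta$ in Corollary~\ref{cor:random-binomial-median} was tailored precisely so that this minimum is nontrivial; any other balance between the two constraints of Definition~\ref{def:approximate-median} would produce a weaker range of admissible $\eta$ and hence a more restrictive balance condition.
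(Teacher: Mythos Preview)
Your proposal is correct and matches the paper's approach exactly: the paper simply states that the theorem follows by combining Proposition~\ref{prop:model-concentration-constants} (concentration constant $h=1/4$), Corollary~\ref{cor:random-binomial-median} (the $(c_d,1/d,(2d-1)/(2d(d-1)))$-median), and the Main Theorem. Your additional verification that $\min\{\lambda/(d-1),\theta-\lambda\}=1/(2d(d-1))$ and the explicit translation of the size and balance conditions via $t=1/p$ are the only details the paper leaves implicit.
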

We are now ready to prove Theorem~\ref{th:main-binomial-model}
  which is this section's main result, and was already stated in the 
  main contributions section.
\begin{prooff}{Theorem~\ref{th:main-binomial-model}}
Let $n,n',n''$ be positive integers such that $n=n'+n''$.
Clearly, 
\begin{eqnarray*}
\expec{}{L(\calG(K^{(d)}_{n},p))} & \geq &
  \expec{}{L(\calG(K^{(d)}_{n'},p))}+\expec{}{L(\calG(K^{(d)}_{n''},p))}\,.
\end{eqnarray*}
By subadditivity, it follows that the limit of 
  $\expec{}{L(\calG(K^{(d)}_{n},p))}$ when normalized by $n$ 
  exists and 
  equals $\delta_{p}=\inf_{n\in\NN} \expec{}{L(\calG(K^{(d)}_{n},p))/n}$.
A direct application of Theorem~\ref{th:random-binomial}
  yields that 
  $\delta_{p}/\sqrt[d]{p}\to c_{d}$ when $p\to 0$.
\end{prooff}

\subsection{Random word model}\label{sec:random-word}
In this section, we consider the random $d$-word model.
The structure, arguments and type of derived results are similar
  to those obtained in the preceding section.
However, the intermediate calculations are somewhat longer 
  and more involved. 
We omit the proofs of this section's results from the current
  draft.

As in the preceding section, we first show that the 
  random model under consideration admits
  a $(c,\lambda,\theta)$-median.
Now consider~$H$ chosen according to $\Sigma(K_{n_1,\ldots,n_d},k)$
  and let $H'$ be the hyper-subgraph of $H$ obtained from $H$ 
  as in the preceding section (i.e.~by removal
  of all edges incident to nodes of degree at least $2$).
Let $E=E(H)$ and $E'=E(H')$.
For the random word model, the analogue
  of Lemma~\ref{lem:binomial-prep} is the following:
\begin{lemma}\label{lem:word-prep}
Let $N$ and $S$ be the geometric mean and sum of 
  positive integers $n_1,\ldots,n_d$, respectively.
Then,
\begin{eqnarray}
\expec{}{|E|} & = & \frac{N^{d}}{k^{d-1}}\,, \\
\expec{}{|E'|} & = & \frac{N^{d}}{k^{d-1}}\left(\frac{k-1}{k}\right)^{S-d}
  \ \ \geq \ \ \frac{N^{d}}{k^{d-1}}\left(1-\frac{S}{k}\right)\,, \\
\expec{}{|E\setminus E'|} & \leq & \frac{N^{d}S}{k^{d}}\,.
\end{eqnarray}
Moreover, for all $\eta>0$, 
\begin{eqnarray*}
\prob{}{|E'|-\expec{}{|E'|}\geq \eta\expec{}{|E'|}}
  & \leq & 
  \frac{1}{\eta^{2}\expec{}{|E'|}}
    + \frac{1}{\eta^{2}}\left(\left(\frac{k-1}{k-2}\right)^{2d-1}-1\right)\,.
\end{eqnarray*}
\end{lemma}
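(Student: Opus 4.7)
The plan is to reduce the lemma to a pair of second-moment calculations on the indicators $X_e=\indic[e\in E]$ and $Y_e=\indic[e\in E']$, attached to each edge $e=(v_1,\ldots,v_d)$ of $K=K_{n_1,\ldots,n_d}$, and then apply the $\Delta$-form of Chebyshev's inequality to $|E'|=\sum_e Y_e$. The three stated identities follow from computing $\expec{}{X_e}$ and $\expec{}{Y_e}$ and summing over the $N^d$ edges of $K$; the tail bound requires the extra work of estimating $\Delta=\sum_{e\neq f}\expec{}{Y_eY_f}$. The argument parallels the proof of Lemma~\ref{lem:binomial-prep}, with the role of edge appearances replaced by the event that all vertices of an edge receive the same letter.

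For the first moments: the event $\setof{e\in E}$ asks that the $d$ endpoints share a letter, hence $\expec{}{X_e}=k(1/k)^d=1/k^{d-1}$, proving the first identity. The event $\setof{e\in E'}$ further requires, for some common letter $\ell$, that each of the $n_j-1$ remaining vertices of every color class $A_j$ avoid $\ell$ (so the endpoint $v_j$ has degree one in $H$); summing over $\ell$ and invoking the independence of letter assignments gives $\expec{}{Y_e}=k(1/k)^d((k-1)/k)^{S-d}=(1/k^{d-1})((k-1)/k)^{S-d}$, hence the second identity. The elementary bound $(1-1/k)^{S-d}\geq 1-S/k$ delivers the stated lower bound, and the third identity follows from $\expec{}{|E\setminus E'|}=\expec{}{|E|}-\expec{}{|E'|}\leq (N^d/k^{d-1})(1-(1-1/k)^{S-d})\leq N^dS/k^d$.

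For $\Delta$ I would split into two cases. If $e,f$ share a vertex $v$, then $Y_e=Y_f=1$ would force $v$ to lie on two distinct edges of $E$, contradicting its degree-one status in $H'$, so $\expec{}{Y_eY_f}=0$. If $e=(u_1,\ldots,u_d)$ and $f=(w_1,\ldots,w_d)$ are vertex-disjoint with common letters $\ell_e,\ell_f$, the same uniqueness reasoning forces $\ell_e\neq\ell_f$; the $2d$ endpoints must receive the two prescribed letters and the $S-2d$ remaining vertices must avoid both, giving $\expec{}{Y_eY_f}=k(k-1)(1/k)^{2d}((k-2)/k)^{S-2d}$. Setting $x=1/(k-1)$, one obtains the clean ratio
\[
\frac{\expec{}{Y_eY_f}}{\expec{}{Y_e}\expec{}{Y_f}}\ =\ (1+x)^{2d-1}(1-x^2)^{S-2d}\ \leq\ (1+x)^{2d-1}\ \leq\ \left(\frac{1}{1-x}\right)^{\!2d-1}\ =\ \left(\frac{k-1}{k-2}\right)^{\!2d-1},
\]
the second inequality using $(1+x)(1-x)\leq 1$. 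Summing over ordered disjoint pairs yields $\Delta\leq((k-1)/(k-2))^{2d-1}(\expec{}{|E'|})^2$, so $\var{}{|E'|}=\expec{}{|E'|}(1-\expec{}{|E'|})+\Delta\leq \expec{}{|E'|}+(\expec{}{|E'|})^2[((k-1)/(k-2))^{2d-1}-1]$; dividing by $(\eta\expec{}{|E'|})^2$ and invoking Chebyshev produces the stated tail bound.

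The main obstacle is the correct case analysis for $\expec{}{Y_eY_f}$: one must argue that shared-vertex pairs contribute zero, that for disjoint pairs the common letters must be distinct, and that the remaining $S-2d$ vertices must simultaneously avoid both letters, so as to set up the exponents in the conditional probability correctly. Once that is pinned down, the precise form $((k-1)/(k-2))^{2d-1}-1$ of the constant emerges from the one-line bound $(1+x)\leq 1/(1-x)$ together with the subtraction inherent in comparing $\var{}{|E'|}$ with $(\expec{}{|E'|})^2$; there is no cleverness beyond carefully bookkeeping the exponents of $k$, $k-1$ and $k-2$.
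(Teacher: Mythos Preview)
Your argument is correct in all particulars: the first-moment computations for $X_e$ and $Y_e$ are right, the case analysis for $\expec{}{Y_eY_f}$ (zero on shared vertices, forced $\ell_e\neq\ell_f$ on disjoint pairs, remaining $S-2d$ vertices avoiding both letters) is exactly what is needed, and the substitution $x=1/(k-1)$ cleanly produces the constant $((k-1)/(k-2))^{2d-1}$ via $(1+x)\leq 1/(1-x)$.

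As for comparison with the paper: the paper explicitly omits the proof of this lemma, saying only that the arguments are ``similar to those obtained in the preceding section'' (i.e., to Lemma~\ref{lem:binomial-prep}) but ``somewhat longer and more involved.'' Your proof is precisely the natural adaptation of that template --- indicators $X_e,Y_e$, linearity for the first three identities, and the $\Delta$-form of Chebyshev for the tail --- with the extra work being the covariance estimate for the dependent family $\{Y_e\}$, which is absent in the binomial case where the tail bound is stated for $|E|$ rather than $|E'|$. So there is nothing further to compare against; your write-up is what the omitted proof would have to be.
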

We can now determine an estimate the median of $L(\Sigma(K_{n_1,\ldots,n_d},k)$.
\begin{proposition}
Let $\delta > 0$, $d\geq 2$, and 
  $N$ and $S$ be the geometric mean and sum of 
  positive integers $n_1,\ldots,n_d$, respectively.
Moreover, let $M=c_{d}N/k^{1-1/d}$ where $c_d$ is the $d$-dimensional Ulam
  constant.
Then, there are sufficiently large constants $C=C(\delta)$ and $K=K(\delta)$
  such that:
\begin{itemize}
\item If $k\geq K$, $N\geq Ck^{1-1/d}$, $12S^{d} \leq \delta c_{d}k^{d-1+1/d}$, 
  and $S\leq k/2$, then any median of $L(\Sigma(K_{n_1,\ldots,n_d},k)$ is 
  upper bounded by $(1+\delta)M$.

\item If $k\geq K$, $N\geq Ck^{1-1/d}$, and $S\leq \delta k/2$,
  then every median of 
  $L(\Sigma(K_{n_1,\ldots,n_d},k)$ is at least $(1-\delta)M$.
\end{itemize}
\end{proposition}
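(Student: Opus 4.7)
The plan is to mimic the proof of Proposition~\ref{prop:binomial} step by step, replacing Lemma~\ref{lem:binomial-prep} with Lemma~\ref{lem:word-prep}. The structural observation that drove the binomial proof carries over verbatim: if $H'$ is obtained from $H$ by deleting every edge incident to a node of degree $\geq 2$ in $H$, then the edges of $H'$ form a (random) non-crossing hyper-matching, so $L(H')$ conditional on $|E'|=m$ is distributed as $lis_d(m)$. This lets us again transport the concentration provided by Corollary~\ref{cor:lis-concentration} into an estimate for $L(H)$, at the cost of controlling $|E'|$ and $|E\setminus E'|$.

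For the upper bound, I would show $\prob{}{L(H)\geq (1+\delta)M}\leq 1/2$ by using $L(H)\leq L(H')+|E\setminus E'|$ and splitting into three events, each to be bounded by $1/6$:
\begin{enumerate}
\item $|E\setminus E'|\geq M\delta/2$. By Markov and Lemma~\ref{lem:word-prep},
\[
\prob{}{|E\setminus E'|\geq M\delta/2}\ \leq\ \frac{2N^{d}S}{\delta c_d Nk^{d-1+1/d}\cdot k^{-(d-1)}}\cdot\frac{1}{k}\ =\ \frac{2N^{d-1}S}{\delta c_d k^{d-1+1/d}},
\]
so using $N\leq S/d$ and the hypothesis $12S^{d}\leq \delta c_d k^{d-1+1/d}$ gives the $1/6$ bound.
\item $|E'|\geq (1+\delta/2)N^{d}/k^{d-1}$. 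Apply the variance estimate from Lemma~\ref{lem:word-prep}; the inverse-expectation term is handled by requiring $N\geq C(\delta)k^{1-1/d}$ with $C$ large, while the extra term $((k-1)/(k-2))^{2d-1}-1$ is forced below a prescribed threshold by taking $k\geq K(\delta)$.
\item $L(H')\geq (1+\delta/2)M$ while $|E'|< (1+\delta/2)N^{d}/k^{d-1}$. Set $m=\lfloor (1+\delta/2)N^{d}/k^{d-1}\rfloor$; since $lis_d$ is monotone in its argument, the event forces $lis_d(m)\geq (1+\delta/2)^{1-1/d}c_d m^{1/d}\geq (1+t)c_d m^{1/d}$ for some $t=t(\delta,d)>0$. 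Choosing $C$ so that $m\geq m_0(t,1/6,d)$ and invoking Corollary~\ref{cor:lis-concentration} yields the $1/6$ bound.
\end{enumerate}

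For the lower bound I would show $\prob{}{L(H)\leq (1-\delta)M}\leq 1/2$ using $L(H)\geq L(H')$ and splitting into the symmetric three events (where we now rely on $\expec{}{|E'|}\geq (N^d/k^{d-1})(1-S/k)\geq (1-\delta/2)N^d/k^{d-1}$, guaranteed by the hypothesis $S\leq \delta k/2$), again applying the variance bound of Lemma~\ref{lem:word-prep} for concentration of $|E'|$ from below, Markov for $|E\setminus E'|$, and Corollary~\ref{cor:lis-concentration} after the change of variables $m=\lceil (1-\delta/2)N^{d}/k^{d-1}\rceil$. Finally, via $t=1/k$ one reads off a $(c_d,1-1/d,\theta)$-approximate median statement (for a suitable $\theta$), so that the Main Theorem applies to yield Theorem~\ref{th:main-word-model} in the same way that Theorem~\ref{th:random-binomial} followed in the binomial case.

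The step I expect to be the main obstacle is the concentration of $|E'|$ in event (2) of the upper bound and its counterpart in the lower bound. Unlike the binomial case where independence of the edge indicators gave a clean Chebyshev bound, in the word model the indicator variables $Y_e=\mathbbm{1}[e\in E']$ are not independent, which is why the variance estimate in Lemma~\ref{lem:word-prep} contains the residual term $((k-1)/(k-2))^{2d-1}-1$. This term cannot be absorbed by enlarging $N$ alone; controlling it forces the two-parameter regime $k\geq K(\delta)$ in addition to $N\geq C(\delta)k^{1-1/d}$, and is the essential reason the statement carries the extra hypothesis $k\geq K$ that was absent from Proposition~\ref{prop:binomial}.
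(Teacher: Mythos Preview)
The paper omits the proof of this proposition entirely, saying only that ``the structure, arguments and type of derived results are similar to those obtained in the preceding section'' (i.e., to Proposition~\ref{prop:binomial}) and that ``the intermediate calculations are somewhat longer and more involved.'' Your proposal is precisely that adaptation: you replace Lemma~\ref{lem:binomial-prep} by Lemma~\ref{lem:word-prep}, reuse the reduction to $lis_d$ via the pruned hyper-graph $H'$, and you correctly isolate the one genuinely new feature---the residual covariance term $((k-1)/(k-2))^{2d-1}-1$ in the variance estimate for $|E'|$, which forces the additional hypothesis $k\geq K(\delta)$. The arithmetic in your event (1) checks out, and your explanation of why two parameters $C(\delta)$ and $K(\delta)$ are now needed is exactly the point.

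One small caution: Lemma~\ref{lem:word-prep} as stated in the draft gives only the upper tail $\prob{}{|E'|-\expec{}{|E'|}\geq \eta\expec{}{|E'|}}$, whereas your lower-bound argument requires the lower tail of $|E'|$ (or, alternatively, a Chebyshev bound on $|E|$ together with Markov on $|E\setminus E'|$, mirroring the binomial proof exactly). Since the underlying estimate comes from Chebyshev and is inherently two-sided, this is not a real obstacle, but you should either state the two-sided version of the lemma or follow the binomial decomposition through $|E|$ and $|E\setminus E'|$ rather than appealing to the lower tail of $|E'|$ directly. Apart from this bookkeeping point, your outline matches what the paper intends.
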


\begin{corollary}\label{cor:random-word-median}
The model $(\Sigma(K_{n_1,\ldots,n_d},k))$ of internal parameter
  $k$ admits a $(c,\lambda,\theta)$-median where
\begin{eqnarray*}
(c,\lambda,\theta) & = & \left(c_d,1-\frac{1}{d},1-\frac{1}{d}+\frac{1}{d^{2}}\right)\,.
\end{eqnarray*}
\end{corollary}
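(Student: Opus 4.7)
The plan is to mimic the proof of Corollary~\ref{cor:random-binomial-median}: we simply translate the hypotheses in the definition of $(c,\lambda,\theta)$-median into the hypotheses of the preceding proposition by choosing $a(\delta)$, $b(\delta)$, $t'(\delta)$ judiciously, and then invoke that proposition directly. Recall the internal parameter is $t=k$, the target exponents are $\lambda=1-1/d$ and $\theta=1-1/d+1/d^{2}$, and the approximating quantity is $M=c_{d}N/k^{1-1/d}=cN/t^{\lambda}$. So we need to produce constants so that $k\geq t'(\delta)$, $N\geq a(\delta)k^{1-1/d}$, and $Sb(\delta)\leq k^{1-1/d+1/d^{2}}$ together imply the four conditions of the preceding proposition, namely $k\geq K(\delta)$, $N\geq C(\delta)k^{1-1/d}$, $12S^{d}\leq \delta c_{d}k^{d-1+1/d}$, and $S\leq \delta k/2$ (which is stronger than the $S\leq k/2$ needed for the upper bound).

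The first two conditions are immediate if we take $a(\delta)=C(\delta)$ and require $t'(\delta)\geq K(\delta)$. The size-upper bound condition $Sb(\delta)\leq k^{\theta}$ gives $S\leq k^{1-1/d+1/d^{2}}/b(\delta)$, whence
\[
S^{d} \ \leq \ \frac{k^{d-1+1/d}}{b(\delta)^{d}}.
\]
Choosing $b(\delta)=(12/(\delta c_{d}))^{1/d}$ makes $S^{d}\leq (\delta c_{d}/12)k^{d-1+1/d}$, as needed. For the remaining condition $S\leq \delta k/2$, note that $S\leq k^{1-1/d+1/d^{2}}/b(\delta)=b(\delta)^{-1}k^{1+(1-d)/d^{2}}$ and the crucial observation is that $(1-d)/d^{2}<0$, so $k^{(1-d)/d^{2}}\to 0$ as $k\to\infty$. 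Consequently, picking $t'(\delta)$ large enough so that in addition to $t'(\delta)\geq K(\delta)$ we also have $k^{(d-1)/d^{2}}\geq 2/(\delta b(\delta))$ for every $k\geq t'(\delta)$, the inequality $S\leq \delta k/2$ follows automatically.

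With $a(\delta),b(\delta),t'(\delta)$ chosen as above, the hypotheses of the preceding proposition hold whenever $k\geq t'(\delta)$, $N\geq a(\delta)k^{\lambda}$, and $Sb(\delta)\leq k^{\theta}$. The proposition then yields that every median of $L(\Sigma(K_{n_{1},\ldots,n_{d}},k))$ lies in the interval $[(1-\delta)M,(1+\delta)M]$, which is exactly the defining property of a $(c_{d},1-1/d,1-1/d+1/d^{2})$-median.

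There is no real obstacle beyond the bookkeeping: the only quantitative input beyond the preceding proposition is the elementary observation that the exponent $\theta-\lambda=1/d^{2}$ is strictly smaller than $1-\lambda=1/d$, which is exactly what makes the two distinct upper bounds on $S$ compatible (one polynomial in $k$ with exponent less than $1$, the other linear in $k$) for all sufficiently large $k$. This slack is precisely why the choice $\theta=1-1/d+1/d^{2}$ is the right one.
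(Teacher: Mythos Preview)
Your proposal is correct and follows exactly the template the paper uses for the analogous Corollary~\ref{cor:random-binomial-median} (the paper explicitly omits the proof of this corollary but states that the arguments are ``similar to those obtained in the preceding section''). Your choices $a(\delta)=C(\delta)$, $b(\delta)=(12/(\delta c_{d}))^{1/d}$, and $t'(\delta)$ large enough to force both $k\geq K(\delta)$ and $k^{(d-1)/d^{2}}\geq 2/(\delta b(\delta))$ are precisely what is needed, and the verification that $Sb(\delta)\leq k^{\theta}$ implies both $12S^{d}\leq\delta c_{d}k^{d-1+1/d}$ and $S\leq\delta k/2$ is accurate.
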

Recalling that by Proposition~\ref{prop:model-concentration-constants}
  we have that $h=1/(4d)$ is a concentration constant for 
  the random $d$-word model,   
  by the preceding corollary and the 
  Main Theorem, we obtain the following:
\begin{theorem}\label{th:random-word}
Let $\epsilon>0$ and $g:\RR\to\RR$ be such that $g(k)=O(k^{\eta})$
  for a given $0\leq\eta < 1/d^{2}$. 
Fix $n_1,\ldots,n_d$ and let $N$ and $S$ denote their geometric
  mean and sum, respectively. 
There exists sufficiently large constants $k_0$ and 
  $A$ such that if $k\leq k_{0}$, $Nk^{1-1/d}\geq A$ and $S\leq g(k)N$, 
  then for $M=c_dN/k^{1-1/d}$ where $c_d$ is the $d$-dimensional Ulam
  constant,
\[
(1-\epsilon)M 
  \ \ \leq \ \ 
  \expec{}{L(\Sigma(K_{n_1,\ldots,n_d},k))}
  \ \ \leq \ \ 
  (1+\epsilon)M\,,
\]
and the following hold:
\begin{itemize}
\item If $\median{}{L(\Sigma(K_{n_1,\ldots,n_d},k))}$ is a median
  of $L(\Sigma(K_{n_1,\ldots,n_d},k))$,
\[
(1-\epsilon)M 
  \ \ \leq \ \ 
  \median{}{L(\Sigma(K_{n_1,\ldots,n_d},k))}
  \ \ \leq \ \ 
  (1+\epsilon)M\,.
\]

\item 
There is an absolute constant $C>0$ such that
\begin{eqnarray*}
\prob{}{L(\Sigma(K_{n_1,\ldots,n_d},k))\leq (1-\epsilon)M}
  & \leq & \exp\left(-\frac{C}{d}\epsilon^{2}M\right)\,, \\
\prob{}{L(\Sigma(K_{n_1,\ldots,n_d},k))\geq (1+\epsilon)M}
  & \leq & \exp\left(-\frac{C}{d}\frac{\epsilon^{2}}{1+\epsilon}M\right)\,.
\end{eqnarray*}
\end{itemize}
\end{theorem}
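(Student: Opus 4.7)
The plan is to derive Theorem~\ref{th:random-word} as a direct application of the Main Theorem, in complete analogy with the proof of Theorem~\ref{th:random-binomial} in Section~\ref{sec:random-binomial}. The key inputs are already in place: Proposition~\ref{prop:model-concentration-constants} supplies the concentration constant $h = 1/(4d)$ for the random $d$-word model, and Corollary~\ref{cor:random-word-median} asserts that $(\Sigma(K_{n_1,\ldots,n_d},k))$, viewed as a random hyper-graph model with internal parameter $t = k$, admits a $(c_d, 1-1/d, 1-1/d+1/d^2)$-median.

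First I would check that the parameter $\eta$ required by the Main Theorem is compatible with the hypothesis of the present statement. With $\lambda = 1 - 1/d$ and $\theta = 1 - 1/d + 1/d^2$, one has
\[
\frac{\lambda}{d-1} \;=\; \frac{1}{d}, \qquad \theta - \lambda \;=\; \frac{1}{d^2},
\]
so $\min\{\lambda/(d-1), \theta-\lambda\} = 1/d^2$. Thus any $\eta < 1/d^2$ (as assumed in Theorem~\ref{th:random-word}) satisfies $\eta \leq \min\{\lambda/(d-1), \theta-\lambda\}$, which is precisely the requirement on $g = O(t^\eta)$ in the Main Theorem's statement.

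Next, since the size and balance conditions of the Main Theorem translate under $t=k$, $\lambda = 1-1/d$ into $N \geq A k^{1-1/d}$ and $S \leq g(k) N$, they match verbatim the hypotheses of Theorem~\ref{th:random-word}. Feeding the triple $(c_d, 1-1/d, 1-1/d+1/d^2)$ and $h = 1/(4d)$ into \eqref{eqn:maintheo-expec}--\eqref{eqn:maintheo-upper}, and noting that $cN/t^\lambda = c_d N / k^{1-1/d} = M$, yields simultaneously the expectation estimate, the median estimate, and the two tail bounds. The factor $1/d$ appearing in the exponents of the tail bounds comes from the concentration constant $h = 1/(4d)$ which may be absorbed into the existential constant $C$ up to the explicit $1/d$ left outside (or equivalently into a single constant $C' = C/d$).

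There is no genuine obstacle here, as the work was done in establishing Corollary~\ref{cor:random-word-median} and Proposition~\ref{prop:model-concentration-constants}. The only thing to be careful about is that the Main Theorem's hypothesis $\eta \leq \min\{\lambda/(d-1),\theta-\lambda\}$ is tight at $1/d^2$; the strict inequality $\eta < 1/d^2$ in the hypothesis of Theorem~\ref{th:random-word} gives a small slack which is more than enough for the Main Theorem to apply. Thus the proof reduces to assembling these pieces and rewriting the conclusion of the Main Theorem in the notation of the random $d$-word model.
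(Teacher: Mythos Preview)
Your proposal is correct and follows exactly the paper's approach: the paper derives Theorem~\ref{th:random-word} directly from the Main Theorem by plugging in the concentration constant $h=1/(4d)$ from Proposition~\ref{prop:model-concentration-constants} and the $(c_d,\,1-1/d,\,1-1/d+1/d^2)$-median from Corollary~\ref{cor:random-word-median}. Your explicit verification that $\min\{\lambda/(d-1),\theta-\lambda\}=1/d^2$ (so the hypothesis $\eta<1/d^2$ fits) is a useful detail the paper leaves implicit.
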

We are now ready to prove Theorem~\ref{th:main-word-model}
  which is this section's main result, and was already stated in the 
  main contributions section.
\begin{prooff}{Theorem~\ref{th:main-word-model}}
Let $n,n',n''$ be positive integers such that $n=n'+n''$.
Clearly, 
\begin{eqnarray*}
\expec{}{L(\Sigma(K^{(d)}_{n},k))} & \geq &
  \expec{}{L(\Sigma(K^{(d)}_{n'},k))}+\expec{}{L(\Sigma(K^{(d)}_{n''},k))}\,.
\end{eqnarray*}
By subadditivity, it follows that the limit of 
  $\expec{}{L(\Sigma(K^{(d)}_{n},k))}$ when normalized by $n$ 
  exists and equals $\gamma_{k}=\inf_{n\in\NN}\expec{}{L(\Sigma(K^{(d)}_{n},k))/n}$.
A direct application of Theorem~\ref{th:random-word}
  yields that $k^{1-1/d}\gamma_{k}\to c_{d}$ when $k\to\infty$.
\end{prooff}

\subsection{Symmetric and anti-symmetric binomial random graph models}\label{sec:symmetric}
Throughout this section we focus on the study of 
  $L(\calD)$ when $\calD$ is either $S(K_{n,n},p))$ or
  $\calA(K_{2n,2n},p)$ as defined in the introduction to this work.

First, we study the behavior of $L(G)$ when $G$ 
  is chosen according $\calS(K_{n,n,p})$.
Recall that in this case, the collection of 
  events $\setof{(x,y),(y,x)}\subseteq E(G)$ are independent, and 
  each one occurs with probability $p$.
Also note that $(x,y)\in E(G)$ if and only if $(y,x)\in E(G)$
  --- any graph for which this equivalence holds will be
  said to be \emph{symmetric}, thus
  motivating the use of the word ``symmetric'' in naming the random graph
  model.
As usual, we begin our study with the determination of 
  the concentration constant for the random model under study.
\begin{lemma}\label{lem:concentration-symm}
The concentration constant for $(\calS(K_{n,n},p))_{n\in\NN}$ is $1/4$.
\end{lemma}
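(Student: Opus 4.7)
The plan is to mimic the proof of \propref{prop:model-concentration-constants}, applying Talagrand's inequality in the form of~\cite[Theorem~2.29]{jlr00}, but with respect to the correct collection of independent random variables for the symmetric model. Here $G \sim \calS(K_{n,n},p)$ is determined by the independent indicator variables $\omega_{x,y}$ for $1 \leq x \leq y \leq n$, where $\omega_{x,y} = 1$ iff both $(x,y)$ and $(y,x)$ belong to $E(G)$ (with the convention that the diagonal variable $\omega_{x,x}$ controls only the single edge $(x,x)$). To apply Talagrand, I need to verify two properties of $L(G)$ viewed as a function of these variables: a $1$-Lipschitz property and a certificate (monotonicity) property.

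The key observation, which I would state as a short lemma, is that for $x \neq y$ the edges $(x,y)$ and $(y,x)$ are incomparable under $\preceq$, since $(x,y) \preceq (y,x)$ would force $x \leq y$ and $y \leq x$. Consequently, any non-crossing matching $M$ contains at most one of the pair $\{(x,y),(y,x)\}$. Given this, I would argue the $1$-Lipschitz property as follows: flipping a single variable $\omega_{x_0,y_0}$ either simultaneously inserts or simultaneously removes both $(x_0,y_0)$ and $(y_0,x_0)$ from $E(G)$; since at most one edge of this pair can belong to an optimal non-crossing matching, $L$ changes by at most $1$ (this handles the diagonal case $x_0 = y_0$ trivially, as a single edge is affected).

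For the certificate condition, if $L(G) \geq r$ is witnessed by a non-crossing matching $M$ with $|M|=r$, then the set of indices $I = \{\{\min(x,y),\max(x,y)\} : (x,y) \in M\}$ has size exactly $r$ (distinct, again by the incomparability observation applied to the pair $\{(x,y),(y,x)\}$), and $L(G') \geq r$ for any $G'$ whose underlying variables agree with those of $G$ on $I$. Thus the Lipschitz constant equals the certificate function, both equal to $1$, and Talagrand's inequality in the form of~\cite[Theorem~2.29]{jlr00} yields
\[
\prob{}{L(\calS(K_{n,n},p)) \leq (1-s)\mediannopar} \leq 2\exp\paren{-\tfrac{1}{4}s^2\mediannopar}, \quad
\prob{}{L(\calS(K_{n,n},p)) \geq (1+s)\mediannopar} \leq 2\exp\paren{-\tfrac{1}{4}\tfrac{s^2}{1+s}\mediannopar},
\]
for every $s \geq 0$ and every median $\mediannopar$ of $L(\calS(K_{n,n},p))$, which is exactly the statement that the concentration constant equals $1/4$.

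I expect no real obstacle: the only subtlety is making sure the underlying independent random variables are the symmetric pairs (rather than individual edges), and then the incomparability of $(x,y)$ and $(y,x)$ handles both the Lipschitz and certificate properties cleanly. If I had taken individual edges as the base variables, the Lipschitz constant would jump to $2$ (because flipping one variable in the symmetric model toggles two edges), and I would obtain only the weaker constant $1/16$; the point of using the pair variables is precisely to recover $1/4$.
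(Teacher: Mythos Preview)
Your proposal is correct and takes exactly the same approach as the paper, which simply says ``Direct application of Talagrand's inequality (as stated in~\cite[Theorem~2.29]{jlr00}).'' You have supplied the details the paper omits---in particular the observation that $(x,y)$ and $(y,x)$ are $\preceq$-incomparable, which is precisely what makes the $1$-Lipschitz and certificate conditions go through with constant~$1$ for the pair variables. (One tiny quibble: in the paper's definition of $\calS(K_{n,n},p)$ the events are indexed by $1\le i<j\le n$, so there are no diagonal edges and your $\omega_{x,x}$ variables are superfluous; this changes nothing in the argument.)
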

\begin{proof}
Direct application of Talagrand's inequality (as stated 
  in~\cite[Theorem~2.29]{jlr00}).
\end{proof}

As in the study of the binomial model (Section~\ref{sec:random-binomial}) and 
  the word model (Section~\ref{sec:random-word}), given a 
  graph $G$ chosen according to $\calS(K_{n,n},p)$ we will
  consider a reduced graph $G'$ obtained from $G$ by removal of 
  all edges incident to nodes of degree at least $2$.
An important observation is that the graph $G'$ thus obtained 
  is also symmetric.
Since $G'$ is symmetric, the number of vertices of degree $1$ in each 
  of the two color classes of $G'$ must be even, say $2m$.
Thus, the arcs between nodes of degree $1$ in $G'$ can be thought
  of as an involution of $[2m]$ without fix points.
In fact, given that the distribution of $G'$ is invariant under 
  permutation of its nodes, the distribution of $G'$ is also
  invariant under such permutation, and the resulting associated
  involution is distributed as a random involution of $[2m]$
  without fix points.
We shall see that under proper assumptions $L(G)$ 
  and $L(G')$ are essentially equal --- thus, $L(G)$ 
  behaves (approximately) like the length of a longest increasing
  subsequence of a randomly chosen involution of 
  $[2m]$ without fix points.
This partly explains our recollection below of some results
  about the length of a longest increasing subsequence
  of randomly chosen involutions.

Let $\calI_{2m}$ be the distribution of a uniformly 
  chosen involution of $[2m]$ without fix points.
Let $L(\calI_{2m})$ denote the length of the longest 
  increasing subsequence of an involution chosen according 
  to $\calI_{2m}$.
Baik and Rains~\cite{br99} showed that the expected
  value of $L(\calI_{2m})$ is roughly $2\sqrt{2m}$, for $m$ large.
Moreover, Kiwi~\cite[Theorem~5]{} established the following 
  concentration result for $L(I_{2m})$ (we state the 
  result in a weaker form):
\begin{theorem}\label{theo:kiwi}
For $m$ sufficiently large and every $0\leq s\leq 2\sqrt{2m}$,
\begin{eqnarray*}
\prob{}{\left|L(\calI_{2m})-\expec{}{L(\calI_{2m})}\right|
  \geq s+32(2m)^{1/4}} & \leq & 4e^{-s^2/16e^{3/2}\sqrt{2m}}\,.
\end{eqnarray*}
\end{theorem}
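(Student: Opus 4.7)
The plan is to deduce the bound from Talagrand's convex distance inequality (as stated in~\cite[Theorem~2.29]{jlr00}) applied to a suitable independent-coordinate encoding of a uniformly random fixed-point-free involution, together with the Baik--Rains asymptotic $\expec{}{L(\calI_{2m})} = 2\sqrt{2m}(1+o(1))$ to pass from a median-based to a mean-based bound.

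First I would realize $\sigma \sim \calI_{2m}$ as a deterministic function of $2m$ i.i.d.\ uniform $[0,1]$ variables $U_1,\ldots,U_{2m}$. One convenient representation is to let $\pi$ be the permutation sorting $(U_1,\ldots,U_{2m})$ into increasing order, and define $\sigma$ by pairing $\pi(2i-1)$ with $\pi(2i)$ for $i=1,\ldots,m$. A short check shows that the resulting $\sigma$ is uniform in the set of fixed-point-free involutions of $[2m]$, so $L(\sigma)$ has the same distribution as $L(\calI_{2m})$.

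Second, I would verify the two hypotheses required by Talagrand's inequality for the function $f(U_1,\ldots,U_{2m}) \eqdef L(\sigma)$. The \emph{Lipschitz condition} holds because altering a single coordinate $U_i$ changes the permutation $\pi$ by at most a local ``swap'', which alters at most a bounded number of pairs of $\sigma$ and therefore changes $L(\sigma)$ by at most an absolute constant. The \emph{certifiability condition} holds because if $L(\sigma) \geq \ell$, then an increasing sub-involution of length $\ell$ is witnessed by at most $2\ell$ of the $U_i$'s. Talagrand's inequality then gives, for every median $\mediannopar$ of $L(\calI_{2m})$ and every $s \geq 0$,
\[
\prob{}{|L(\calI_{2m}) - \mediannopar| \geq s} \;\leq\; 4\exp\!\left(-\frac{s^{2}}{c\, \mediannopar}\right),
\]
for an absolute constant $c$. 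Using the Baik--Rains estimate $\mediannopar \leq (1+o(1))\cdot 2\sqrt{2m}$, this already yields an exponent of the form $-s^{2}/(c'\sqrt{2m})$ with $c' \leq 16 e^{3/2}$ for $m$ sufficiently large; the $e^{3/2}$ slack absorbs the $o(1)$ looseness in the asymptotics.

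Third, I would replace $\mediannopar$ by $\expec{}{L(\calI_{2m})}$. Integrating the tail bound just derived gives $\bigl|\expec{}{L(\calI_{2m})} - \mediannopar\bigr| \leq 32(2m)^{1/4}$ for $m$ large enough, which is exactly the additive term $32(2m)^{1/4}$ in the statement. A triangle-inequality step then turns the concentration around $\mediannopar$ into the claimed concentration around $\expec{}{L(\calI_{2m})}$.

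The main obstacle I foresee is the sharp bookkeeping required to obtain the explicit constants $16e^{3/2}$ in the exponent and $32$ in the additive correction: the Lipschitz constant of $L(\sigma)$ with respect to a single $U_i$ is not one but a small integer, and the transition from the sorting-based encoding to pairs of $\pi$ requires care in rare events where a single $U_i$ crosses many other values. A secondary delicate point is the justification that $\mediannopar$ and $\expec{}{L(\calI_{2m})}$ differ by only $O((2m)^{1/4})$; this is where the cap $0 \leq s \leq 2\sqrt{2m}$ in the hypothesis is used, since beyond that range the right-hand side ceases to be informative.
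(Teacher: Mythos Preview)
The paper does not prove this theorem; it is quoted as a result from an earlier paper of Kiwi (cited as \cite[Theorem~5]{}), so there is no in-paper argument to compare your sketch against.

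That said, your proposed route has a genuine gap at the Lipschitz step. In your encoding, changing a single coordinate $U_i$ does \emph{not} alter the sorting permutation $\pi$ by only a ``local swap'': if $U_i$ moves from, say, the smallest to the largest value, the rank of $i$ jumps from $1$ to $2m$ and every other rank shifts by one. Under your pairing rule $\sigma(\pi(2j-1))=\pi(2j)$, a shift of all ranks by one re-pairs \emph{every} element of $[2m]$, so the resulting involution $\sigma'$ can differ from $\sigma$ at all points and $|L(\sigma)-L(\sigma')|$ is not bounded by any absolute constant. The ``rare events where a single $U_i$ crosses many other values'' you flag are not a side issue to be absorbed later: Talagrand's inequality in the form of~\cite[Theorem~2.29]{jlr00} requires the Lipschitz bound to hold for \emph{all} configurations, and here it fails outright. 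Consequently the conclusion you draw from Talagrand is unjustified for this encoding, and the downstream steps (median estimate, mean--median comparison) never get off the ground.

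To salvage the approach you would need either a different independent-coordinate representation of a uniform fixed-point-free involution for which altering one coordinate changes at most $O(1)$ pairs of $\sigma$ --- and it is not clear such an encoding exists --- or a concentration inequality adapted to the symmetric-group setting rather than to product spaces.
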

\begin{corollary}
For every $0\leq t\leq 1$ and $\alpha>0$ there exists
  a $m_{0}=m_{0}(t,\alpha)$ sufficiently large such that
  for all $m\geq m_{0}$,
\begin{eqnarray*}
\prob{}{\left|L(\calI_{2m})-2\sqrt{2m}\right|\geq 2t\sqrt{2m}}
  & \leq & \alpha\,.
\end{eqnarray*}
\end{corollary}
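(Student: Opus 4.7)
The plan is to mimic the proof of Corollary~\ref{cor:lis-concentration} essentially verbatim, with Theorem~\ref{theo:kiwi} playing the role of Theorem~\ref{th:bb} and the Baik--Rains asymptotic $\expec{}{L(\calI_{2m})}/\sqrt{2m}\to 2$ (cited from~\cite{br99} in the paragraph preceding Theorem~\ref{theo:kiwi}) playing the role of the definition $\expec{}{lis_d(m)}/m^{1/d}\to c_d$.

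First I would fix $t\in[0,1]$ and $\alpha>0$, and choose $m_0=m_0(t,\alpha)$ large enough so that the following three conditions are simultaneously in force for every $m\geq m_0$: (i) Theorem~\ref{theo:kiwi} applies; (ii) the Baik--Rains asymptotic gives $|\expec{}{L(\calI_{2m})}-2\sqrt{2m}|\leq t\sqrt{2m}$; and (iii) the auxiliary quantity $s(m):=t\sqrt{2m}-32(2m)^{1/4}$ lies in the admissible range $[0,2\sqrt{2m}]$ of Theorem~\ref{theo:kiwi} and satisfies $4\exp\!\left(-s(m)^2/(16 e^{3/2}\sqrt{2m})\right)\leq \alpha$. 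Since $32(2m)^{1/4}=o(\sqrt{2m})$ and $t\leq 1$, for all sufficiently large $m$ we have $s(m)\geq (t/2)\sqrt{2m}\geq 0$ and $s(m)\leq t\sqrt{2m}\leq 2\sqrt{2m}$; in particular the exponent in (iii) is at least $t^2\sqrt{2m}/(64 e^{3/2})$, which tends to $+\infty$. Hence each of (i)--(iii) is achievable for $m$ large, and so is their conjunction.

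The remainder is a triangle inequality argument. If $|L(\calI_{2m})-2\sqrt{2m}|\geq 2t\sqrt{2m}$, then (ii) forces
\[
\left|L(\calI_{2m})-\expec{}{L(\calI_{2m})}\right|
  \ \geq\ 2t\sqrt{2m}-t\sqrt{2m}
  \ =\ t\sqrt{2m}
  \ =\ s(m)+32(2m)^{1/4}\,,
\]
so that
\[
\prob{}{\left|L(\calI_{2m})-2\sqrt{2m}\right|\geq 2t\sqrt{2m}}
  \ \leq\ \prob{}{\left|L(\calI_{2m})-\expec{}{L(\calI_{2m})}\right|\geq s(m)+32(2m)^{1/4}}\,,
\]
and Theorem~\ref{theo:kiwi} together with condition (iii) bounds the right-hand side by $\alpha$.

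There is no substantive obstacle; the only thing to check carefully is that the additive slack $32(2m)^{1/4}$ appearing in Theorem~\ref{theo:kiwi} is lower order than the scale $t\sqrt{2m}$ of the deviation we want to bound (which is why $s(m)\geq (t/2)\sqrt{2m}$ for $m$ large), and that the resulting sub-Gaussian tail $\exp(-\Omega(\sqrt{m}))$ of Theorem~\ref{theo:kiwi} indeed beats any fixed $\alpha>0$ for sufficiently large $m$. The uniformity in $t$ over the closed interval $(0,1]$ is not claimed in the statement (the threshold $m_0$ is allowed to depend on $t$), so no extra care is needed as $t\downarrow 0$.
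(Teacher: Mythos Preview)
Your proof is correct and follows essentially the same route as the paper: choose $m_0$ large enough that Theorem~\ref{theo:kiwi} applies, that the Baik--Rains asymptotic puts $\expec{}{L(\calI_{2m})}$ within $t\sqrt{2m}$ of $2\sqrt{2m}$, and that the resulting tail bound is at most $\alpha$; then conclude by the triangle inequality. The only cosmetic difference is bookkeeping: the paper absorbs the additive $32(2m)^{1/4}$ into the condition on the expectation (requiring $|\expec{}{L(\calI_{2m})}-2\sqrt{2m}|+32(2m)^{1/4}\leq t\sqrt{2m}$) and then invokes Theorem~\ref{theo:kiwi} with $s=t\sqrt{2m}$, whereas you keep the weaker condition~(ii) and instead invoke the theorem with the shifted $s(m)=t\sqrt{2m}-32(2m)^{1/4}$.
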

\begin{proof}
Let $m_{0}=m_{0}(t,\alpha)$ be sufficiently large such that 
  Theorem~\ref{theo:kiwi} and the following 
  conditions hold for all $m>m_{0}$:
\begin{itemize}
\item $\left|\expec{}{L(\calI_{2m})}-2\sqrt{2m}\right|
  + 32(2m)^{1/4}\leq t\sqrt{2m}$.
\item $4e^{-t^{2}\sqrt{2m}/16e^{3/2}} \leq \alpha$.
\end{itemize}
It follows that
\begin{eqnarray*}
\lefteqn{\prob{}{\left|L(\calI_{2m})-2\sqrt{2m}\right| \geq 2t\sqrt{2m}}} \\
  & \leq & \prob{}{\left|L(\calI_{2m})-\expec{}{L(\calI_{2m})}\right|
           \geq 2t\sqrt{2m}-\left|\expec{}{L(\calI_{2m})}-2\sqrt{2m}\right|}
\\
  & \leq & 
  \prob{}{\left|L(\calI_{2m})-\expec{}{L(\calI_{2m})}\right|
              \geq t\sqrt{2m}+32(2m)^{1/4}} 
\\
  & \leq &
  4e^{-t^2\sqrt{2m}/16e^{3/2}}\,.
\end{eqnarray*}
\end{proof}
We now proceed to show that the 
  symmetric random model $\calS(K_{n,n},p)$ admits 
  a $(c,\lambda,\theta)$-median where the constant $c$ 
  is related to a constant that arises in the study of the asymptotic
  behavior of $L(\calI_{2m})$.
We will need the following analogues of Lemmas~\ref{lem:binomial-prep} 
  and~\ref{lem:word-prep}.
\begin{lemma}
Let $n$ be a positive integer.
Let $G$ is chosen according to $\calS(K_{n,n},p)$.
If $E$ and $E'$ denote $E(G)$ and $E(G')$, respectively,
  then
\begin{eqnarray}
\expec{}{|E|} & = & pn(n-1)\,, \label{eqn:symmetric-first} \\
\expec{}{|E'|} & = & pn(n-1)(1-p)^{2n-4}\,, \label{eqn:symmetric-second} \\
\expec{}{|E\setminus E'|} & \leq &
  2p^{2}n(n-1)(n-2)\,. \label{eqn:symmetric-third}
\end{eqnarray}
Moreover, for $\eta>0$,
\begin{eqnarray}
\prob{}{\left||E|-\expec{}{|E|}\right| \geq \eta\expec{}{|E|}}
  & \leq & \frac{2}{\eta^{2}\expec{}{|E|}}\,.
  \label{eqn:symmetric-fourth} 
\end{eqnarray}
\end{lemma}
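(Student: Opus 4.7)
The plan is to reduce everything to the $\binom{n}{2}$ independent Bernoulli$(p)$ indicators $X_{ij}$ (for $1\leq i<j\leq n$) that, by definition of $\calS(K_{n,n},p)$, govern the presence of the symmetric edge-pair $\{(i,j),(j,i)\}$. Since the diagonal is never in $E$, the identity $|E|=2\sum_{i<j}X_{ij}$ gives~\eqref{eqn:symmetric-first} immediately, and~\eqref{eqn:symmetric-fourth} follows from Chebyshev applied to the same sum: because the $X_{ij}$ are independent, $\var{}{|E|}=4p(1-p)\binom{n}{2}\leq 2\expec{}{|E|}$, which yields the announced tail bound.

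For~\eqref{eqn:symmetric-second}, I would fix an ordered edge $(i,j)$ with $i\neq j$ and decompose the event $(i,j)\in E'$ as the conjunction of $X_{ij}=1$ with the absence of every other edge incident to row~$i$ or column~$j$. Using the symmetry constraint, these ``obstructions'' correspond exactly to the $2(n-2)$ unordered pairs $\{i,k\}$ and $\{j,k\}$ with $k\in[n]\setminus\{i,j\}$. The only non-trivial bookkeeping is to verify that these $2(n-2)$ pairs are distinct from one another and from $\{i,j\}$; once that is done, the corresponding $X$'s are mutually independent of $X_{ij}$, so $\prob{}{(i,j)\in E'}=p(1-p)^{2n-4}$, and summing over the $n(n-1)$ admissible ordered pairs delivers the equality. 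The lower bound in~\eqref{eqn:symmetric-second} then follows from Bernoulli's inequality $(1-p)^{2n-4}\geq 1-(2n-4)p$.

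Finally,~\eqref{eqn:symmetric-third} falls out of linearity of expectation: $\expec{}{|E\setminus E'|}=\expec{}{|E|}-\expec{}{|E'|}=pn(n-1)(1-(1-p)^{2n-4})$, and another application of Bernoulli's inequality $1-(1-p)^{2n-4}\leq (2n-4)p\leq 2(n-2)p$ closes the bound. The whole lemma is a direct transcription of the proof of Lemma~\ref{lem:binomial-prep} to the symmetric setting; the sole mildly non-routine step is the incidence count behind~\eqref{eqn:symmetric-second}, where one must confirm that the pair $\{i,j\}$ is disjoint from all the obstructing pairs so that joint independence can legitimately be invoked and the exponent $2n-4$ (rather than $2n-3$ or $2n-2$) is the correct one.
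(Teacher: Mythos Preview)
Your proposal is correct and follows essentially the same approach as the paper's proof sketch: both introduce the indicators $X_{i,j}$ for $i<j$, compute $\expec{}{Y_{i,j}}=p(1-p)^{2n-4}$ by counting the $2(n-2)$ obstructing pairs, derive~\eqref{eqn:symmetric-third} from $\expec{}{|E|}-\expec{}{|E'|}$ via Bernoulli's inequality, and obtain~\eqref{eqn:symmetric-fourth} from Chebyshev using independence of the $X_{i,j}$. The only cosmetic difference is that you compute $\var{}{|E|}=4p(1-p)\binom{n}{2}$ directly, whereas the paper routes the same bound through its ``Chebyshev for indicator variables'' proposition by computing $\Delta=\binom{n}{2}\bigl(\binom{n}{2}-1\bigr)p^{2}$; the two are equivalent.
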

\begin{proof}{[Sketch]}
For $i\neq j$, let $X_{i,j}$ and $Y_{i,j}$ denote the indicator of the event
  $(i,j)\in E$ and $(i,j)\in E'$, respectively.
Observing that $\expec{}{X_{i,j}}=p$, $\expec{}{Y_{i,j}}=p(1-p)^{2n-4}$,
  $|E|=\sum_{i,j:i\neq j} X_{i,j}$ and 
  $|E'|=\sum_{i,j:i\neq j} Y_{i,j}$, 
  yield~\eqref{eqn:symmetric-first} and~\eqref{eqn:symmetric-second}.
Since $E'\subseteq E$, it follows that
  $|E\setminus E'|=|E|-|E'|$.
Identity~\eqref{eqn:symmetric-fourth} follows from~\eqref{eqn:symmetric-first} 
  and~\eqref{eqn:symmetric-second}
  observing that $(1-p)^{2n-4}\geq 1-(2n-4)p$.

To establish~\eqref{eqn:symmetric-fourth} we observe that 
  $|E|$ can also be expressed as $2\sum_{i<j} X_{i,j}$ and that 
  $\set{X_{i,j}}{i<j}$ is a collection of independent random variables.
To conclude, note that
\[
\Delta \ \ \eqdef \ \ 
  \sum_{(i,j),(k,l):i<j,k<l \atop (i,j)\neq (k,l)} \expec{}{X_{i,j}X_{k,l}}
  \ \ = \ \ {n\choose 2}\left({n \choose 2}-1\right)p^{2} 
  \ \ \leq \ \ \frac{\expec{}{|E|}^{2}}{4}\,,
\]
and apply Chebyshev's inequality for indicator random variables to
  conclude~\eqref{eqn:symmetric-fourth}.
\end{proof}

\begin{proposition}
Let $\delta>0$, $0<p\leq 1$ and $n$ be a positive integer.
There is a sufficiently large constant $C_{1}=C_{1}(\delta)$, 
  and sufficiently small constants $C_{2}$ and $C_{3}$, such
  that
\begin{itemize}
\item 
If $C_{1}/p \leq n^{2} \leq C_{2}\delta/p^{3/2}$, then
  every median of $L(\calS(K_{n,n},p))$ is at most 
  $2(1+\delta)n\sqrt{p}$.

\item 
If $C_{1}/p\leq n^{2}\leq C_{3}\delta^{2}/p^{2}$, then 
  every median of $L(\calS(K_{n,n},p))$ is at least
  $2(1-\delta)n\sqrt{p}$.
\end{itemize}
\end{proposition}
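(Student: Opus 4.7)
The proof closely follows the template established for Proposition~\ref{prop:binomial} in Section~\ref{sec:random-binomial}. Set $M=2n\sqrt{p}$, and let $G$ be chosen according to $\calS(K_{n,n},p)$, with $G'$ denoting the subgraph obtained by removing all edges incident to nodes of degree at least $2$; write $E=E(G)$ and $E'=E(G')$. The essential structural observation is that, since $\calS(K_{n,n},p)$ places no mass on ``diagonal'' edges of the form $(i,i)$, the graph $G'$ is a symmetric matching and hence encodes a fixed-point-free involution $\sigma$ on the set $V'\subseteq[n]$ of degree-$1$ indices of $G'$, with $|V'|=|E'|$. By exchangeability of the model under permutations of $[n]$, conditioned on $|E'|=2m$ the involution $\sigma$ is uniformly distributed among fixed-point-free involutions of a size-$2m$ set. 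Since the length of the longest non-crossing matching of $G'$ equals the longest increasing subsequence of $\sigma$ (after relabeling $V'$ to $[2m]$ in the natural order), this yields that $L(G')$ conditioned on $|E'|=2m$ is distributed as $L(\calI_{2m})$.

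For the upper bound I would use $L(G)\leq L(G')+|E\setminus E'|$ and union-bound
\begin{eqnarray*}
\prob{}{L(G)\geq (1+\delta)M}
  & \leq & \prob{}{|E\setminus E'|\geq \delta M/2}
   + \prob{}{|E'|\geq (1+\delta/2)M^{2}/4} \\
  & & \quad + \prob{}{L(G')\geq (1+\delta/2)M,\ |E'|<(1+\delta/2)M^{2}/4}\,.
\end{eqnarray*}
The first probability is bounded via Markov's inequality and~\eqref{eqn:symmetric-third}, producing a bound of order $p^{3/2}n^{2}/\delta$ and forcing $n^{2}\leq C_{2}\delta/p^{3/2}$. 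The second is bounded via~\eqref{eqn:symmetric-fourth}, using $|E'|\leq|E|$ and that $\expec{}{|E|}$ is close to $M^{2}/4$ under our hypotheses, producing a bound of order $1/(\delta^{2}n^{2}p)$ and forcing $n^{2}\geq C_{1}(\delta)/p$. The third probability, after conditioning on $|E'|=2m$ with $2m<(1+\delta/2)M^{2}/4$, reduces to $\prob{}{L(\calI_{2m})\geq (1+\delta/2)M}$; using $M\geq 2\sqrt{2m}/\sqrt{1+\delta/2}$, this threshold rewrites as $(1+t)\cdot 2\sqrt{2m}$ for some $t=t(\delta)>0$, and the corollary to Theorem~\ref{theo:kiwi} delivers an arbitrarily small bound provided $n^{2}p$ is large enough (absorbed into $C_{1}(\delta)$).

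For the lower bound, using $L(G')\leq L(G)$ I would split analogously
\begin{eqnarray*}
\prob{}{L(G)\leq (1-\delta)M}
  & \leq & \prob{}{|E|\leq (1-\delta)M^{2}/4}
   + \prob{}{|E\setminus E'|\geq (\delta/2)M^{2}/4} \\
  & & \quad + \prob{}{L(G')\leq (1-\delta)M,\ |E'|>(1-\delta/2)M^{2}/4}\,.
\end{eqnarray*}
The first and third terms are treated as in the upper bound (by~\eqref{eqn:symmetric-fourth} and the corollary to Theorem~\ref{theo:kiwi}, respectively). The decisive difference lies in the second term: Markov and~\eqref{eqn:symmetric-third} now give a bound of order $pn/\delta$, which forces the tighter constraint $n^{2}\leq C_{3}\delta^{2}/p^{2}$ characteristic of the lower bound hypothesis. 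Choosing $C_{1}(\delta),C_{2},C_{3}$ so that each of the six terms above is at most $1/6$ completes the argument.

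The main obstacle I anticipate is the careful bookkeeping of multiplicative error when translating thresholds natural to the graph model (expressed in terms of $M=2n\sqrt{p}$) into thresholds natural to the corollary of Theorem~\ref{theo:kiwi} (expressed in terms of $2\sqrt{2m}$), together with the verification that the random fluctuations of $|E'|$ around $\expec{}{|E'|}\approx n^{2}p$ are indeed absorbed by the slack $\delta/2$ built into each event; this is the analogue of the calculation that drives Proposition~\ref{prop:binomial} in Section~\ref{sec:random-binomial}.
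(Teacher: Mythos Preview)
Your proposal is correct and follows exactly the approach the paper intends: the paper's own proof consists of the single line ``Similar to the proof of Proposition~\ref{prop:binomial},'' and you have faithfully carried out that adaptation, replacing the $d$-dimensional Ulam input by the corollary to Theorem~\ref{theo:kiwi} on random fixed-point-free involutions and using the moment estimates~\eqref{eqn:symmetric-first}--\eqref{eqn:symmetric-fourth} in place of Lemma~\ref{lem:binomial-prep}. Your identification of which Markov bound forces $n^{2}\leq C_{2}\delta/p^{3/2}$ versus $n^{2}\leq C_{3}\delta^{2}/p^{2}$ is also correct.
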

\begin{proof}
Similar to the proof of Proposition~\ref{prop:binomial}.
\end{proof}
We immediately have the following:
\begin{corollary}\label{cor:approximate-median-symm}
The model $(\calS(K_{n,n},p))_{n\in\NN}$ of internal parameter
  $t=1/p$ admits a $(2,1/2,3/4)$-median.
\end{corollary}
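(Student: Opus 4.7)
The plan is to unpack what the corollary asks for and recognize that the preceding Proposition delivers almost exactly what Definition~\ref{def:approximate-median} requires, modulo a translation of hypotheses. Since both color classes of $K_{n,n}$ have size $n$, the geometric mean is $N=n$ and the sum is $S=2n$. With $t=1/p$ and target $(c,\lambda,\theta)=(2,1/2,3/4)$, the quantity $cN/t^{\lambda}$ equals $2n\sqrt{p}$, which is precisely the centering point $2n\sqrt{p}$ appearing in both bullets of the Proposition. So the corollary reduces to choosing $a(\delta)$, $b(\delta)$, and $t'(\delta)$ so that the definition's size lower and upper bound conditions imply the Proposition's box conditions $C_1/p\leq n^2\leq C_2\delta/p^{3/2}$ (for the upper median bound) and $C_1/p\leq n^2\leq C_3\delta^2/p^2$ (for the lower median bound).

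First I would fix $\delta>0$ and extract the constants $C_1(\delta)$, $C_2$, $C_3$ from the Proposition. The definitional lower bound $N\geq a(\delta)t^{1/2}$ rewrites as $n^2\geq a(\delta)^2/p$, so setting $a(\delta)=\sqrt{C_1(\delta)}$ subsumes the hypothesis $C_1/p\leq n^2$ in both bullets. Next, the definitional upper bound $Sb(\delta)\leq t^{3/4}$ rewrites as $2nb(\delta)\leq p^{-3/4}$, i.e.~$n^2\leq 1/(4b(\delta)^2 p^{3/2})$. Choosing $b(\delta)\geq 1/(2\sqrt{C_2\delta})$ then forces $n^2\leq C_2\delta/p^{3/2}$, which combined with the lower bound yields the Proposition's first conclusion and delivers the upper estimate $\mediannopar\leq (1+\delta)\,2n\sqrt{p}$.

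Finally, for the lower estimate on the median, I need to additionally arrange that the same definitional constraints imply $n^2\leq C_3\delta^2/p^2$. Since the definitional upper bound already gives $n^2\leq 1/(4b(\delta)^2 p^{3/2})$, it suffices that $1/(4b(\delta)^2 p^{3/2})\leq C_3\delta^2/p^2$, equivalently $\sqrt{t}=1/\sqrt{p}\geq 1/(4b(\delta)^2 C_3\delta^2)$. Hence setting $t'(\delta)\geq \left(4b(\delta)^2 C_3\delta^2\right)^{-2}$ is enough, and for every $t\geq t'(\delta)$ the Proposition's second bullet applies and gives $\mediannopar\geq (1-\delta)\,2n\sqrt{p}$. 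Combining both estimates produces the required $(2,1/2,3/4)$-approximate median.

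The only nontrivial point---and the main thing to verify carefully---is that the upper-size regime permitted by $\theta=3/4$ (namely $n^2\lesssim p^{-3/2}$) is automatically contained in the regime $n^2\lesssim p^{-2}$ needed for the lower median estimate, once $t$ is large; this is precisely why the definition's freedom to take $t'(\delta)$ large is needed, and it is the reason $\theta=3/4$ (rather than a larger exponent) is the right choice. Everything else is bookkeeping: no new probabilistic estimate is required beyond the Proposition already proved.
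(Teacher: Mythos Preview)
Your proposal is correct and follows exactly the template the paper uses for the analogous Corollary~\ref{cor:random-binomial-median}: translate the Definition~\ref{def:approximate-median} size conditions into the Proposition's box hypotheses by suitable choices of $a(\delta)$, $b(\delta)$, and $t'(\delta)$. The paper itself gives no explicit argument here (it writes only ``We immediately have the following''), so your unpacking is precisely the intended derivation. One small expository quibble: the binding constraint that forces $\theta=3/4$ is the first bullet's requirement $n^{2}\leq C_{2}\delta/p^{3/2}$, not the containment in the $p^{-2}$ regime; the latter is what necessitates taking $t'(\delta)$ large, as you correctly note, but it is not what caps $\theta$.
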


We now define an auxiliary distribution which will be useful for
  our study:
\begin{itemize}
\item $\calO(K_{n,n},p)$ 
  (the oriented symmetric binomial random graph model) --- 
  the distribution over the set of 
  subgraphs $H$ of $K_{n,n}$ where 
  the events $\set{H}{(i,j)\in E(H)}$ for $1\leq i<j\leq n$,
  have probability $p$ and are mutually independent, and the 
  events $\set{H}{(i,j)\in E(H)}$, $1\leq j\leq i\leq n$,
  have probability~$0$.
\end{itemize}
(See Figure~\ref{fig:oriented} for an illustration of the distinction
  between distributions $\calS(K_{n,n},p)$ and $\calO(K_{n,n},p)$.)
\begin{figure}\label{fig:oriented}
\begin{center}
\ifpdf\input{oriented.pdf_t}\else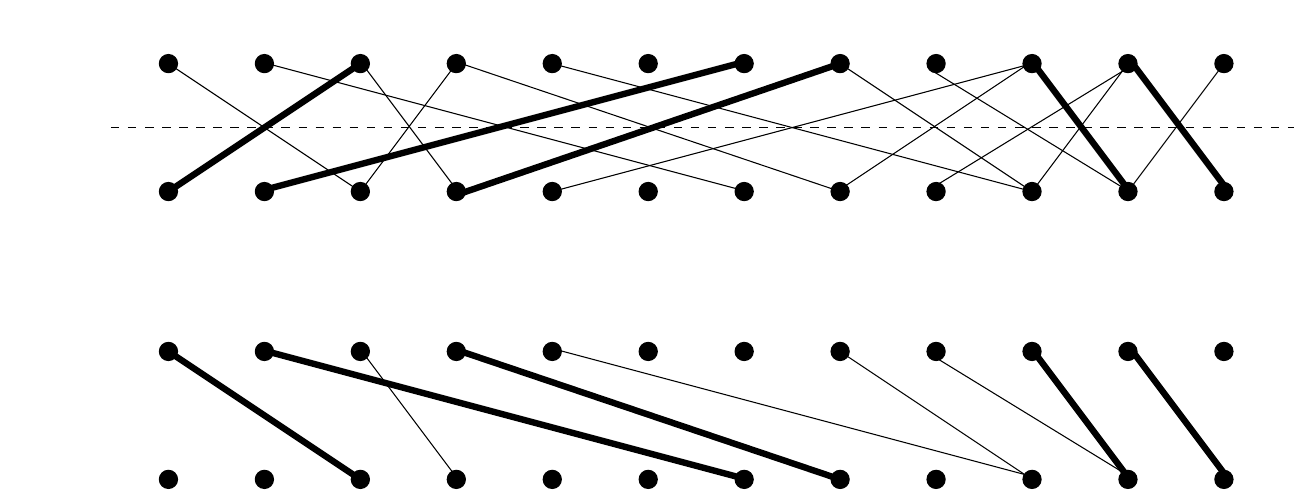\fi
\end{center}
\caption{An illustration of a graph $G$ in the support of 
  $\calS(K_{12,12},p)$ (top), and the graph $O$ in the support 
  of $\calO(K_{12,12},p)$ obtained from $G$ by removal of all
  edges $(x,y)$ such that $x\geq y$ (bottom).
Thicker edges represent a non-crossing matching $M$ of $G$ (top), and 
  the associated non-crossing matching $N$ of $O$ with edge set
  $\set{(\min\setof{x,y},\max\setof{x,y})}{(x,y)\in E(M)}$ (bottom).}
\end{figure}

The following result justifies why we can henceforth work 
  either with $L(\calS(K_{n,n},p))$ or $L(\calO(K_{n,n},p))$.
\begin{lemma}\label{lem:symm-oriented}
The random variables $L(\calS(K_{n,n},p))$ and $L(\calO(K_{n,n},p))$ 
  are identically distributed.
\end{lemma}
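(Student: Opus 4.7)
My plan is to establish the identity in distribution via an explicit coupling. I sample $G$ according to $\calS(K_{n,n},p)$ and define $O$ to be the subgraph of $G$ consisting of precisely those edges $(i,j)$ with $i<j$. Checking the two definitions against each other shows immediately that $O$ is distributed according to $\calO(K_{n,n},p)$: the events $\setof{(i,j)\in E(O)}$ for $i<j$ are mutually independent with probability $p$, while $(i,j)\notin E(O)$ whenever $i\geq j$. It therefore suffices to establish $L(G)=L(O)$ pointwise under this coupling.

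The inequality $L(O)\leq L(G)$ is trivial since $O\subseteq G$. For the reverse, I will exhibit a size-preserving injection $\phi$ from non-crossing matchings of $G$ into non-crossing matchings of $O$, acting edge-by-edge by $\phi((x,y))=(\min\setof{x,y},\max\setof{x,y})$; this is exactly the map displayed in Figure~\ref{fig:oriented}. That each $\phi(e)$ lies in $E(O)$ uses only the symmetry of $G$: if $(x,y)\in E(G)$ then also $(y,x)\in E(G)$, so $\phi((x,y))\in E(O)$ (the definition of $\calS$ produces no diagonal edges).

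The substantive step, which I expect to be the only delicate part of the argument, is verifying that $\phi(M)$ is a non-crossing matching of $O$ of cardinality $|M|$ for every non-crossing matching $M$ of $G$. For any two distinct $e_i=(x_i,y_i)\in M$ ($i=1,2$) I must check that $\phi(e_1)$ and $\phi(e_2)$ are comparable under $\preceq$ and vertex-disjoint; injectivity of $\phi|_M$ will then follow from vertex-disjointness, since $\phi(e_1)=\phi(e_2)$ would already force a shared coordinate. I plan to split into cases according to the sign of $x_i-y_i$ for each $i$. When both edges lie on the same side of the diagonal, $\phi$ acts either as the identity or as coordinate swap on both edges, and both required properties follow immediately from the corresponding properties of the pair $(e_1,e_2)$ in $M$. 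The only delicate case is the mixed one, say $x_1<y_1$ and $x_2>y_2$ with $e_1\preceq e_2$: combining $x_1\leq x_2$ and $y_1\leq y_2$ with the two side inequalities yields the chain $x_1<y_1\leq y_2<x_2$, from which comparability of $\phi(e_1)=(x_1,y_1)$ with $\phi(e_2)=(y_2,x_2)$ and their vertex-disjointness both follow at once; the reversed comparison $e_2\preceq e_1$ is handled symmetrically. This completes the coupling argument and hence the proof of the lemma.
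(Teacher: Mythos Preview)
Your proof is correct and follows essentially the same approach as the paper: build the coupling between $G\sim\calS(K_{n,n},p)$ and $O\sim\calO(K_{n,n},p)$ by restricting to edges above the diagonal, then show $L(G)=L(O)$ via the edgewise map $(x,y)\mapsto(\min\setof{x,y},\max\setof{x,y})$. The paper simply asserts that the image of a non-crossing matching under this map is again a non-crossing matching of the same size, whereas you supply the case analysis; your mixed case is handled correctly (and in fact the inequality $y_1\leq y_2$ can be sharpened to $y_1<y_2$ by vertex-disjointness in $M$, though you do not need this).
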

\begin{proof}
Let $O$ be a graph in the support of $\calO(K_{n,n},p)$.
We can associate to $O$ a graph $G$
  over the same collection of vertices
  and having edge set $\set{(x,y)}{(x,y)\in E(O) \text{ or } 
    (y,x)\in E(O)}$.
Clearly, $G$ is a symmetric subgraph of $K_{n,n}$ and hence it 
  belongs to the support of $\calS(K_{n,n},p)$.
It is easy to see that the mapping from $O$ to $G$ is one-to-one.
Moreover, the probability of $G$ being chosen under $\calS(K_{n,n},p)$
  is exactly equal to the probability of occurrence of $O$
  under $\calO(K_{n,n},p)$.

On the other hand, if $M$ is a non-crossing subgraph of $G$, then
  there is a non-crossing subgraph of $O$ (and hence of $G$),
  say $N$, whose size is the same as the one of $M$.
Indeed, it suffices to take as the collection of edges of $N$ the 
  set $\set{(\min\setof{x,y},\max\setof{x,y})}{(x,y)\in E(M)}$.
(See Figure~\ref{fig:oriented} for an illustration of 
  the relation between $M$ and $N$.)
We get that $L(G)=L(O)$, which concludes the proof.
\end{proof}

We are now ready to prove the main result of this section.
\begin{theorem}\label{th:random-symm}
For every $\epsilon>0$ there is a sufficiently small constant 
  $p_0$ and a sufficiently large constant $A$ such that 
  for all $p\leq p_{0}$ and $n\geq A/\sqrt{p}$,
\begin{equation}\label{eqn:mainsymm-expec}
(1-\epsilon)2n\sqrt{p} 
  \ \ \leq \ \
  \expec{}{L(\calS(K_{n,n},p))}
  \ \ \leq \ \
  (1+\epsilon)2n\sqrt{p}\,,
\end{equation}
and the following hold
\begin{itemize}
\item If $\median{}{L(\calS(K_{n,n},p))}$ is a median of 
  $L(\calS(K_{n,n},p))$,
\begin{equation}\label{eqn:mainsymm-median}
(1-\epsilon)2n\sqrt{p} 
  \ \ \leq \ \
  \median{}{L(\calS(K_{n,n},p))}
  \ \ \leq \ \
  (1+\epsilon)2n\sqrt{p}\,.
\end{equation}
\item There is an absolute constant $C>0$, such that
\begin{eqnarray}
\prob{}{L(\calS(K_{n,n},p))\leq (1-\epsilon)2n\sqrt{p}} 
  & \leq & \exp\left(-C\epsilon^{2}n\sqrt{p}\right)\,, 
\label{eqn:mainsymm-lower} \\
\prob{}{L(\calS(K_{n,n},p))\geq (1+\epsilon)2n\sqrt{p}} 
  & \leq & \exp\left(-C\frac{\epsilon^{2}}{1+\epsilon}n\sqrt{p}\right)\,.
\label{eqn:mainsymm-upper}
\end{eqnarray}
\end{itemize}
\end{theorem}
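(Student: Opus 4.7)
The plan is to obtain the lower half of the statement from the Main Theorem applied to the weak random model $(\calS(K_{n,n},p))_{n\in\NN}$, and to obtain the upper half by identifying $L(\calS(K_{n,n},p))$ with $L(\calO(K_{n,n},p))$ via Lemma~\ref{lem:symm-oriented} and then dominating the latter by $L(\calG(K_{n,n},p))$, to which Theorem~\ref{th:random-binomial} applies with $d=2$ and $c_2=2$.

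For the lower bounds, note that $(\calS(K_{n,n},p))_{n\in\NN}$ is a weak random $2$-partite hyper-graph model, that by Lemma~\ref{lem:concentration-symm} it has concentration constant $h=1/4$, and that by Corollary~\ref{cor:approximate-median-symm} it admits a $(c,\lambda,\theta)=(2,1/2,3/4)$-median with internal parameter $t=1/p$. Since $n_1=n_2=n$, we have $N=n$ and $S=2n$, so the balance condition is satisfied with, say, $g\equiv 2$ (equivalently $\eta=0$, which lies in $[0,\min\setof{\lambda,\theta-\lambda}]=[0,1/4]$). The ``weak random model'' clause at the end of the Main Theorem then delivers directly the lower bound in~\eqref{eqn:mainsymm-expec}, the lower bound in~\eqref{eqn:mainsymm-median}, and the tail bound~\eqref{eqn:mainsymm-lower}, with $cN/t^{\lambda}=2n\sqrt{p}$ and with the constant $C$ arising from the universal $Kh$ produced by the Main Theorem.

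For the upper bounds, by Lemma~\ref{lem:symm-oriented} we may replace $L(\calS(K_{n,n},p))$ by the identically distributed $L(\calO(K_{n,n},p))$. We couple $\calO(K_{n,n},p)$ with $\calG(K_{n,n},p)$ as follows: draw $G\sim\calG(K_{n,n},p)$ and let $O$ be the subgraph of $G$ obtained by deleting all edges $(i,j)$ with $i\geq j$. Then $O\sim\calO(K_{n,n},p)$, and any non-crossing matching of $O$ is also a non-crossing matching of $G$, so that $L(O)\leq L(G)$ pointwise under this coupling. Consequently, for every threshold $z$,
\[
\prob{}{L(\calS(K_{n,n},p))\geq z}
  \ \ = \ \ \prob{}{L(\calO(K_{n,n},p))\geq z}
  \ \ \leq \ \ \prob{}{L(\calG(K_{n,n},p))\geq z}\,.
\]
Applying Theorem~\ref{th:random-binomial} at $d=2$ (hence with $c_2=2$ and $M=2n\sqrt{p}$) with $z=(1+\epsilon)2n\sqrt{p}$ yields the upper tail bound~\eqref{eqn:mainsymm-upper}. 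Taking expectations of the pointwise inequality gives the upper bound in~\eqref{eqn:mainsymm-expec}, and the probability inequality above (at $z=(1+\epsilon)2n\sqrt{p}$) shows that $\prob{}{L(\calS(K_{n,n},p))>(1+\epsilon)2n\sqrt{p}}<1/2$ for $n\sqrt{p}$ large enough, so every median of $L(\calS(K_{n,n},p))$ is at most $(1+\epsilon)2n\sqrt{p}$, establishing the upper bound in~\eqref{eqn:mainsymm-median}.

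The main conceptual point is the need to sidestep the failure of block independence for $\calS$ (which is only a weak random model), and this is precisely what the coupling with $\calG$ achieves; the rest is bookkeeping that $p_0$ and $A$ may be chosen simultaneously small and large enough for both the Main Theorem (on the lower side) and Theorem~\ref{th:random-binomial} (on the upper side) to be in force.
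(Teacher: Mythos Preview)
Your proposal is correct and follows essentially the same route as the paper: the lower half comes from the weak-model clause of the Main Theorem applied to $\calS$ (using Lemma~\ref{lem:concentration-symm} and Corollary~\ref{cor:approximate-median-symm}), and the upper half comes from the coupling $L(\calS)=L(\calO)\le L(\calG)$ via Lemma~\ref{lem:symm-oriented} together with Theorem~\ref{th:random-binomial} at $d=2$. The only cosmetic difference is that the paper deduces the median upper bound directly from the stochastic domination $\median{}{L(O)}\le\median{}{L(H)}$, whereas you route it through the tail estimate; both are immediate.
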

\begin{proof}
Unfortunately, $(\calS(K_{n,n},p))_{n\in\NN}$ is not a random 
  hyper-graph model, so we can not immediately apply the Main Theorem.
However, it is a weak random hyper-graph model.
Hence, to prove the lower bound in~\eqref{eqn:mainsymm-expec}
  and~\eqref{eqn:mainsymm-median}, and inequality~\eqref{eqn:mainsymm-lower},
  we use the fact that the model $\calS(K_{n,n},p)$ with internal
  parameter $t=1/p$ has
  a concentration constant $h=1/4$ (Lemma~\ref{lem:concentration-symm})
  admits a $(2,1/2,3/4)$-median (Corollary~\ref{cor:approximate-median-symm}),
  and apply the Main Theorem.

To prove the remaining bounds, consider a bipartite 
  graph $H$ chosen according to $\calG(K_{n,n},p)$, and let 
  $O$ be the graph obtained from $H$ by deletion of 
  all its edges $(x,y)$ such that $x\geq y$.
Since $O$ is a subgraph of $H$, it immediately follows that 
  $L(O)\leq L(H)$.
Note that $O$ follows the distribution $\calO(K_{n,n},p)$.
By Lemma~\ref{lem:symm-oriented}, 
  $L(O)$ has the same distribution as $L(\calS(K_{n,n},p)$.
Hence, if $n$ and $p$ satisfy the hypothesis of 
  Theorem~\ref{th:random-binomial}
\begin{eqnarray*}
\expec{}{L(\calS(K_{n,n},p))} 
  &  = &
  \expec{}{L(O)}
  \ \ \leq \ \ 
  \expec{}{L(H)}
  \ \ \leq \ \ 
  (1+\epsilon)2n\sqrt{p}\,, \\
\median{}{L(\calS(K_{n,n},p))} 
  & = &
  \median{}{L(O)}
  \ \ \leq \ \ 
  \median{}{L(H)}
  \ \ \leq \ \ 
  (1+\epsilon)2n\sqrt{p}\,,
\end{eqnarray*}
and provided $C$ is as in Theorem~\ref{th:random-binomial},
\begin{eqnarray*}
\lefteqn{\prob{}{L(\calS(K_{n,n},p))\geq (1+\epsilon)2n\sqrt{p}}
  \ \ =  \ \
  \prob{}{L(O)\geq (1+\epsilon)2n\sqrt{p}}} \\
  & \leq & 
  \prob{}{L(H)\geq (1+\epsilon)2n\sqrt{p}}
  \ \ \leq \ \ 
  \exp\left(-C\frac{\epsilon^{2}}{1+\epsilon}2n\sqrt{p}\right)\,.
\end{eqnarray*}
This concludes the proof of the stated result.
\end{proof}

We can now establish Theorem~\ref{th:main-random-symm}.
\begin{prooff}{Theorem~\ref{th:main-random-symm}}
Let $n,n',n''$ be positive integers such that $n=n'+n''$.
Clearly, 
\begin{eqnarray*}
\expec{}{L(\calS(K_{n,n},p))} & \geq &
  \expec{}{L(\calS(K_{n',n'},p))}+\expec{}{L(\calS(K_{n'',n''},k))}\,.
\end{eqnarray*}
By subadditivity, it follows that the limit of 
  $\expec{}{L(\calS(K_{n,n},p))}$ when normalized by $n$ 
  exists and equals $\sigma_{p}=\inf_{n\in\NN}\expec{}{L(\calS(K_{n,n},p))/n}$.
A direct application of Theorem~\ref{th:main-random-symm}
  yields that $\sigma_{p}/\sqrt{p}\to 2$ when $p\to 0$.
\end{prooff}

One can also show, although not as straightforward as for the 
  case of the symmetric binomial random graph model, that the following 
  analogue of Theorem~\ref{th:random-symm}
  holds for the anti-symmetric case.
\begin{theorem}\label{th:random-anti-symm}
For every $\epsilon>0$ there is a sufficiently small constant 
  $p_0$ and a sufficiently large constant $A$ such that 
  for all $p\leq p_{0}$ and $n\geq A/\sqrt{p}$,
\begin{equation}\label{eqn:mainantisymm-expec}
(1-\epsilon)4n\sqrt{p} 
  \ \ \leq \ \
  \expec{}{L(\calA(K_{2n,2n},p))}
  \ \ \leq \ \
  (1+\epsilon)4n\sqrt{p}\,,
\end{equation}
and the following hold
\begin{itemize}
\item If $\median{}{L(\calA(K_{2n,2n},p))}$ is a median of 
  $L(\calA(K_{2n,2n},p))$,
\begin{equation}\label{eqn:mainantisymm-median}
(1-\epsilon)4n\sqrt{p} 
  \ \ \leq \ \
  \median{}{L(\calA(K_{2n,2n},p))}
  \ \ \leq \ \
  (1+\epsilon)4n\sqrt{p}\,.
\end{equation}
\item There is an absolute constant $C>0$, such that
\begin{eqnarray}
\prob{}{L(\calA(K_{2n,2n},p))\leq (1-\epsilon)4n\sqrt{p}} 
  & \leq & \exp\left(-C\epsilon^{2}n\sqrt{p}\right)\,, 
\label{eqn:mainantisymm-lower} \\
\prob{}{L(\calA(K_{2n,2n},p))\geq (1+\epsilon)4n\sqrt{p}} 
  & \leq & \exp\left(-C\frac{\epsilon^{2}}{1+\epsilon}n\sqrt{p}\right)\,.
\label{eqn:mainantisymm-upper}
\end{eqnarray}
\end{itemize}
\end{theorem}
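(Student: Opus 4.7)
The plan is to mirror the structure of the proof of Theorem~\ref{th:random-symm}, but since $(\calA(K_{2n,2n},p))_{n\in\NN}$ is not even a weak random hyper-graph model, the Main Theorem cannot be applied directly; instead I will reduce to Theorem~\ref{th:random-binomial} via two different geometric decompositions. Denote by $\sigma(i,j)=(2n+1-i,2n+1-j)$ the $180^{\circ}$ rotation defining the anti-symmetry, and by $Q_{1},Q_{2},Q_{3},Q_{4}$ the four quadrants $[n]^{2}$, $[n]\times\{n+1,\dots,2n\}$, $\{n+1,\dots,2n\}\times[n]$ and $\{n+1,\dots,2n\}^{2}$ of $[2n]^{2}$.

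\textbf{Lower bounds (via doubling).} Let $G\sim\calA(K_{2n,2n},p)$. The $\sigma$-mirror of any edge of $Q_{1}$ lies in $Q_{4}$, so distinct edges of $Q_{1}$ lie in distinct $\sigma$-orbits, and hence $G|_{Q_{1}}$ is distributed as $\calG(K_{n,n},p)$. If $M$ is a longest non-crossing matching of $G|_{Q_{1}}$, then $\sigma(M)\subseteq G|_{Q_{4}}$ is a non-crossing matching of the same length; every edge of $M$ strictly $\preceq$-precedes every edge of $\sigma(M)$ and the two sets of edges are vertex-disjoint, so $M\cup\sigma(M)$ is a non-crossing matching in $G$ of size $2|M|$. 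This gives $L(\calA(K_{2n,2n},p))\geq 2L(\calG(K_{n,n},p))$ in stochastic dominance, and applying Theorem~\ref{th:random-binomial} for $d=2$ (with $c_{2}=2$ so that $L(\calG(K_{n,n},p))\approx 2n\sqrt{p}$) immediately yields the expectation lower bound, the median lower bound, and the lower tail inequality~\eqref{eqn:mainantisymm-lower}.

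\textbf{Upper bounds (via decomposition).} Any non-crossing matching $C$ of $G$ is entirely contained in $Q_{1}\cup Q_{2}\cup Q_{4}$ or in $Q_{1}\cup Q_{3}\cup Q_{4}$, because an edge of $Q_{2}$ and an edge of $Q_{3}$ are $\preceq$-incomparable. In the first case, let $i^{*}\in\{0,1,\dots,2n\}$ be the largest row index of a $Q_{1}$-edge of $C$ (with $i^{*}=0$ if none). Since any $Q_{1}$-edge of $C$ strictly precedes any non-$Q_{1}$-edge of $C$, the $Q_{1}$-edges of $C$ all lie in $[i^{*}]\times[n]$ while the remaining edges lie in $\{i^{*}+1,\dots,2n\}\times\{n+1,\dots,2n\}$. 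The $\sigma$-mirror of each of these two rectangular regions is disjoint from the region itself, so the two restricted subgraphs are (marginally) distributed as $\calG(K_{i^{*},n},p)$ and $\calG(K_{2n-i^{*},n},p)$, and
\[
|C|\ \leq\ L\bigl(G|_{[i^{*}]\times[n]}\bigr)+L\bigl(G|_{\{i^{*}+1,\dots,2n\}\times\{n+1,\dots,2n\}}\bigr).
\]
Applying Theorem~\ref{th:random-binomial} to each summand and using the Cauchy--Schwarz estimate
\[
\sqrt{i^{*}np}+\sqrt{(2n-i^{*})np}\ \leq\ \sqrt{2\cdot 2n^{2}p}\ =\ 2n\sqrt{p}
\]
gives $|C|\leq(1+\epsilon/2)\cdot 4n\sqrt{p}$ with exponentially small failure probability for each fixed $i^{*}$. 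A union bound over the $2n+1$ possible values of $i^{*}$, together with the entirely analogous argument for the second case (using a column transition $j^{*}$), then yields the upper-tail inequality~\eqref{eqn:mainantisymm-upper}; the expectation and median upper bounds follow from this tail bound by the standard integration argument of Section~\ref{sec:upper-mean-median}.

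\textbf{Main obstacle.} The delicate step is making the union bound over $i^{*}$ (and $j^{*}$) uniform: when $i^{*}$ is very close to $0$ or to $2n$ the hypothesis $\sqrt{i^{*}np}\geq A$ required by Theorem~\ref{th:random-binomial} fails, and one must replace the binomial bound by the trivial $L(G|_{[i^{*}]\times[n]})\leq\min(i^{*},n)$. Verifying that $|C|\leq(1+\epsilon)4n\sqrt{p}$ still holds in these boundary regimes---where the trivial summand is negligible and the other summand is, by the theorem, bounded by $(1+\epsilon/2)\cdot 2n\sqrt{2p}<4n\sqrt{p}$---while simultaneously keeping track of the fact that the two restricted subgraphs are only marginally (not jointly) binomial because they share $\sigma$-orbits, is what makes the argument genuinely more involved than the corresponding proof for $\calS(K_{n,n},p)$.
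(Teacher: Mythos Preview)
The paper omits its own proof of this theorem, so there is nothing to compare against directly; the paper only remarks that the argument is ``not as straightforward'' as for $\calS$. Your lower-bound reduction via doubling $M\mapsto M\cup\sigma(M)$ is clean and correct, and the upper-bound decomposition with the Cauchy--Schwarz estimate $\sqrt{i^{*}}+\sqrt{2n-i^{*}}\le 2\sqrt n$ is the right idea for achieving the tight constant~$4$.

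There is, however, a genuine gap in the upper-tail argument as written. The union bound over all $n+1$ (not $2n+1$: $i^{*}$ is a $Q_{1}$-row index, so $i^{*}\le n$) values of $i^{*}$ produces a factor of order~$n$ in front of the exponential, and this factor cannot be absorbed: under the hypotheses $p\le p_{0}$, $n\sqrt{p}\ge A$, one may take $n\sqrt{p}=A$ while letting $p\to 0$, so that $n=A/\sqrt{p}\to\infty$ and $\log n\gg n\sqrt{p}$. Thus $(n+1)\exp(-C'\epsilon^{2}n\sqrt{p})$ is \emph{not} bounded by $\exp(-C\epsilon^{2}n\sqrt{p})$ uniformly over the stated range. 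The issues you flag in your ``Main obstacle'' paragraph---the boundary regime where $\sqrt{i^{*}np}<A$, and the shared $\sigma$-orbits between $R_{1}$ and $R_{2}$---are real but secondary: the dependence is harmless since you only use marginal tail bounds, and the boundary regime is handled by the trivial bound you indicate. The union-bound cardinality is the actual obstruction.

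The fix is to discretise. Since $L_{1}(i^{*})=L(G|_{[i^{*}]\times[n]})$ is non-decreasing in $i^{*}$ and $L_{2}(i^{*})=L(G|_{\{i^{*}+1,\dots,2n\}\times\{n+1,\dots,2n\}})$ is non-increasing, for any bucket $i^{*}\in[k\Delta,(k+1)\Delta)$ one has $L_{1}(i^{*})+L_{2}(i^{*})\le L_{1}((k+1)\Delta)+L_{2}(k\Delta)$. Choosing $\Delta\asymp\epsilon n$ gives only $O(1/\epsilon)$ buckets; the enlarged rectangles satisfy $(k+1)\Delta+(2n-k\Delta)=2n+\Delta$, so the Cauchy--Schwarz bound becomes $\sqrt{(k+1)\Delta}+\sqrt{2n-k\Delta}\le 2\sqrt{n}\sqrt{1+\Delta/(2n)}\le 2\sqrt{n}(1+\epsilon/4)$, preserving the target $(1+\epsilon)4n\sqrt{p}$ up to a constant fraction of~$\epsilon$. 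Each bucket now has both rectangles of geometric mean at least $\sqrt{\epsilon}\,n$, so Theorem~\ref{th:random-binomial} applies uniformly (with the balance ratio $S/N\le 2/\sqrt{\epsilon}$, a constant), and the $O(1/\epsilon)$-term union bound is absorbed by taking $A=A(\epsilon)$ large enough. With this modification your strategy goes through.
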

\begin{proof}
Omitted from current draft.
\end{proof}
Theorem~\ref{th:main-random-anti-symm} can now be established
  much in the same way as Theorem~\ref{th:main-random-symm} was 
  derived.

\bibliographystyle{alpha}
\bibliography{biblio}

\end{document}
